\theoremstyle{plain}
\newtheorem{theorem}{Theorem}[section]
\theoremstyle{plain}
\newtheorem{lemma}{Lemma}[section]
\newtheorem{proposition}{Proposition}[section]
\newtheorem{corollary}{Corollary}[section]
\newtheorem{definition}{Definition}[section]
\newtheorem{remark}{Remark}[section]
\newcommand{\<}{\left\langle}
\renewcommand{\>}{\right\rangle}
\renewcommand{\]}{\right]}
\newcommand{\eps}{\varepsilon}
\newcommand{\To}{\longrightarrow}
\newcommand{\be} {\begin{equation}}
\newcommand{\ee} {\end{equation}}
\newcommand{\bea} {\begin{eqnarray}}
\newcommand{\eea} {\end{eqnarray}}
\newcommand{\Bea} {\begin{eqnarray*}}
	\newcommand{\Eea} {\end{eqnarray*}}
\newcommand{\pa} {\partial}
\newcommand{\al} {\alpha}
\newcommand{\ba} {\beta}
\newcommand{\de} {\delta}
\newcommand{\ga} {\gamma}
\newcommand{\Ga} {\Gamma}
\newcommand{\om} {\omega}
\newcommand{\De} {\Delta}
\newcommand{\la} {\lambda}
\newcommand{\nequiv} {\not\equiv}
\newcommand{\Si} {\Sigma}
\newcommand{\no} {\nonumber}
\newcommand{\noi} {\noindent}
\newcommand{\lab} {\label}
\newcommand{\R}{\mathbb R}
\newcommand{\N}{\mathbb N}
\newcommand{\Rn}{\mathbb R^N}
\newcommand{\Hs}{H^s(\mathbb{R}^{N})}
\newcommand{\Iaf}{I_{a,f}(u)}
\newcommand{\ul}{u_{locmin}(a,f;x)}
\newcommand{\Jaf}{J_{a,f}(v)}
\newcommand{\taf}{t_{a,f}(v)}
\newcommand{\fhs}{\|f\|_{H^{-s}(\Rn)}}
\newcommand{\hms}{H^{-s}(\mathbb{R}^N)}
\newcommand{\authorfootnotes}{\renewcommand\thefootnote{\@fnsymbol\c@footnote}}%
\def\e{{\text{e}}}
\def\N{{I\!\!N}}
\numberwithin{equation}{section} \allowdisplaybreaks
\begin{document}
        \title[Existence and multiplicity of positive solutions]{Existence and Multiplicity of positive solutions of certain nonlocal scalar field equations}

\date{}
 
\author[Mousomi Bhakta]{Mousomi Bhakta}
\address{ Department of Mathematics, Indian Institute of Science Education and Research, Dr. Homi Bhaba Road, Pune-411008, India}
\email{mousomi@iiserpune.ac.in}

\author[Souptik Chakraborty]{Souptik Chakraborty}
\email{soupchak9492@gmail.com}

\author[Debdip Ganguly]{Debdip Ganguly}
\email{debdip@iiserpune.ac.in}

\keywords{Nonlocal equations, scalar field equations, fractional laplacian, Palais-Smale decomposition, Mountain-pass geometry, Lusternik-Schnirelman Category theory, energy estimate, positive solutions, min-max method.}

\begin{abstract}

We study existence and multiplicity of positive solutions of the following class of nonlocal scalar field equations:
\begin{equation}
  \tag{$\mathcal P$}
\left\{\begin{aligned}
		(-\Delta)^s u + u &= a(x) |u|^{p-1}u+f(x)\;\;\text{in}\;\mathbb{R}^{N},\\
		u &\in H^{s}{(\mathbb{R}^{N})}
		 \end{aligned}
  \right.
\end{equation}
where $s \in (0,1)$, $N>2s$, $1<p<2_s^*-1:=\frac{N+2s}{N-2s}$, $0< a\in L^\infty(\Rn)$ and $f\in H^{-s}(\Rn)$ is a nonnegative functional i.e., $\prescript{}{H^{-s}}{\langle}f,u{\rangle}_{H^s}\geq 0$ whenever $u$ is a nonnegative function  in $\Hs$. 
We prove existence of a positive solution when $f\equiv 0$ under certain  asymptotic behavior on the function $a.$ Moreover,
 when $a(x)\geq 1$, $a(x)\to 1$ as $|x|\to\infty$ and $\|f\|_{H^{-s}(\Rn)}$ is small enough (but $f\not\equiv 0$),  then we show that ($\mathcal P$) admits at least two positive solutions. Finally, we establish existence of three positive solutions to ($\mathcal P$),  under the condition that $a(x)\leq 1$ with $a(x)\to 1$ as $|x|\to\infty$  
and $\|f\|_{H^{-s}(\Rn)}$ is small enough (but $f\not\equiv 0$).
\end{abstract}

\maketitle 
\tableofcontents

\section{Introduction}
	In this article we study the existence and multiplicity of positive solutions to the following fractional elliptic problem in $\Rn$:
\begin{equation}
  \tag{$\mathcal P$}\label{MAT1}
\left\{\begin{aligned}
		(-\Delta)^s u + u &= a(x) |u|^{p-1}u+f(x)\;\;\text{in}\;\mathbb{R}^{N},\\
		u &>0 \quad\text{in}\quad\mathbb{R}^{N},\\
		u &\in H^{s}{(\mathbb{R}^{N})},
		 \end{aligned}
  \right.
\end{equation}
		 where
		 $s \in (0,1)$ is fixed parameter, $N>2s$, $1<p<2_s^*-1:=\frac{N+2s}{N-2s}$, $0< a\in L^\infty(\Rn)$ 
		 and $0\not\equiv f\in H^{-s}(\Rn)$ is a nonnegative functional i.e., $ \prescript{}{H^{-s}}{\langle}f,u{\rangle}_{H^s}\geq 0$ whenever $u\geq 0$.
Here $(-\De)^s$ denotes the  fractional Laplace operator which can be defined for the Schwartz class functions $\mathcal{S}(\Rn)$  as follows:
\begin{equation} \label{De-u}
  \left(-\Delta\right)^su(x): = c_{N,s} 
\, \text{P.V.} \int_{\Rn}\frac{u(x)-u(y)}{|x-y|^{N+2s}} \, {\rm d}y, \quad c_{N,s}= \frac{4^s\Ga(N/2+ s)}{\pi^{N/2}|\Ga(-s)|}.
\end{equation}
$$H^s(R^{N}): =\bigg\{u\in L^{2}(\R^N) \; : \; \iint_{\mathbb{R}^{2N}}\frac{|u(x)-u(y)|^2}{|x-y|^{N+2s}}\,{\rm d}x\,{\rm d}y<\infty\bigg\},$$
	with the Gagliardo norm
	$$\|u\|_{H^{s}(\R{^N})} := \Big(\int_{\R{^N}}|u|^2 \, {\rm d}x+\iint_{\mathbb{R}^{2N}} \frac{|u(x)-u(y)|^2}{|x-y|^{N+2s}}\,{\rm d}x\,{\rm d}y\Big)^{1/2}.$$
	\begin{definition}
		We say $u \in \Hs$ is a positive weak solution of \eqref{MAT1} if $u>0$ in $\Rn$ and for every $\phi\in\Hs$ we have,
		\bea
		\int \int_{\R^{2N}}\frac{(u(x) - u(y))(\phi(x)-\phi(y))}{|x-y|^{N+2s}}\,{\rm d}x\,{\rm d}y +\int_{\R^N} u\phi \, {\rm d}x &=& \int_{\R{^N}}au^p\phi \, {\rm d}x+ \prescript{}{H^{-s}}{\langle}f,\phi{\rangle}_{H^s},\no
		\eea
		where $\prescript{}{H^{-s}}{\langle}.,.{\rangle}_{H^s}$ denotes the duality bracket between $f$ and $\phi$. 
	\end{definition} 
	
\medskip	
	
In recent years, there has been a considerable interest in more general version of nonlinear scalar field equation with fractional diffusion

\begin{equation}\lab{9-16-3}
\left\{\begin{aligned}
		(-\Delta)^s u + V(x)u &= g(x,u)\;\;\text{in}\;\mathbb{R}^{N},\\
		u &>0 \quad\text{in}\quad\mathbb{R}^{N},\\
		u &\in H^{s}{(\mathbb{R}^{N})},
		 \end{aligned}
  \right.
\end{equation}
see e.g., the papers
 (\cite{AV, BhM, dMMS, FV, Frank, IPS}) and the references quoted  therein.
In the physical context, this type of equation arises in the study of standing waves for the fractional  Schr\"{o}dinger equation and fractional Klein-Gordon equation. First consider the fractional Schr\"{o}dinger equation 

$$i\frac{\pa\psi}{\pa t}+(-\De)^s\psi+(V(x)+\om)\psi=g(x,\psi),$$

where $\psi=\psi(x,t)$ is a complex valued function defined on $\Rn\times\R$.  Suppose we assume 
\be\lab{9-16-1}g(x, \rho e^{i\theta})=e^{i\theta}g(x,\rho), \quad\forall\, \rho,\, \theta\in\R, \quad x\in\Rn,\ee 
and  $g: \Rn\times\R\to\R$ and $g(x,.)$ is a continuos odd function and $g(x,0)=0.$ Then one can look 
for standing wave solutions, i.e.,   $\psi(x,t)=e^{i\om t}u(x)$, which led us to the following scalar field equation 
 \be\lab{9-16-2}(-\Delta)^s u + V(x)u = g(x,u)\;\;\text{in}\;\mathbb{R}^{N}.\ee
 
 \medskip

In context to fractional quantum mechanics, nonlinear fractional Schr\"odinger equation has been proposed by Laskin in (\cite{L1, L2}) in modelling some quantum mechanical 
phenomenon. In particular it arises in evaluating Feynman path integral from the Brownian-like to the L\'{e}vy-like quantum mechanical paths.

\medskip

One may also consider fractional nonlinear Klein-Gordon equation:
$$\psi_{tt}+(-\De)^s\psi+(V(x)+\om^2)\psi=g(x,\psi),$$
where $\psi:\Rn\times\R\to \mathbb{C}$ and $g$ satisfies \eqref{9-16-1}. Then one can look for standing wave solutions as before and 
once again this will led us to the equation of type \eqref{9-16-2}.

\medskip 

Equations of the type \eqref{9-16-2} with $s=1$ arise in various other contexts of physics, for example, the classical approximation in statistical mechanics, constructive field theory, false vacuum in cosmology, nonlinear optics, laser propagations, etc (see \cite{AD, C, F, G}). They are also known as nonlinear Euclidean scalar field equations (see \cite{BL-1, BL-2}) which has been studied extensively in the last few decades by many Mathematicians. We recall some of the works without any claim of completeness the 
papers (\cite{Bahri-Li, BL-1, BL-2,  DF, DN, W}) and the references quoted therein. Much of the interest has centered on the existence and multiplicity 
of solutions under various assumptions on the potential $V$ and the nonlinearity $g.$

\medskip

Under the stated assumptions for \eqref{MAT1}, problem \eqref{MAT1} can be considered as a perturbation problem of the following homogeneous equation:
		\begin{equation}\label{AT0.8}
		\begin{split}
		&(-\Delta)^s w + w = w^{p}\;\;\text{in}\;\mathbb{R}^{N},\\
		&w>0 \;\;\text{in}\;\mathbb{R}^{N},\\
		&w \in H^{s}{(\mathbb{R}^{N})}.
		\end{split}	
		\end{equation}
		
In the seminal paper, Frank, Lenzmann and Silvestre in \cite{Frank} proved that \eqref{AT0.8} has a unique (up to  a translation) ground state solution. Further, if $w$ is any positive solution of \eqref{AT0.8}, then $w$ is radially symmetric, strictly decreasing and $w\in H^{2s+1}(\Rn) \cap C^{\infty}(\Rn)$ and satisfies the decay property:
		\be\lab{9-7-1}\frac{C^{-1}}{1+|x|^{N+2s}}\leq w(x)\leq \frac{C}{1+|x|^{N+2s}}, \ee
		with some constant $C>0$ depending on $N,\;p,\;s$.

	 \medskip
	 
Our main question is whether positive solutions can still survive after a perturbation of type \eqref{MAT1}. This question have been studied by  several authors in the local case $s=1.$
The homogeneous case, i.e., $f(x) \equiv 0$ has been studied extensively by 
Bahri-Li \cite{Bahri-Li}, Berestycki-Lions \cite{BL-1} and Ding-Ni {\cite{DN}}. On the 
other hand for the non homogeneous case, i.e., $f(x) \nequiv 0$ we refer the 
works of Adachi-Tanaka \cite{Adachi}, Jeanjean \cite{Jeanjean} and  Zhu \cite{Z} where existence 
and multiplicity of  positive solutions were proved under some assumptions on the function $a.$ We
also refer the work of Cao-Zhou \cite{CZ} for the existence of positive solution with more 
general nonlinearities. 

\medskip
				
We separate the following two cases:

 \begin{itemize}
 \item $ {\bf (A_1)}:  \quad a(x)\in(0,1] \quad\forall\, x\in \R^N, \quad\inf_{x\in\Rn} a(x)>0, \quad \mu(\{x: a(x)\neq 1\})>0, \\
 \text{and}\quad a(x)\rightarrow 1\;\text{as}\;|x|\rightarrow\infty,$

\medskip 

\item  ${\bf (A_2)}: \quad a(x)\geq 1 \quad\forall\, x\in \R^N, \quad a\in L^\infty(\Rn), \quad \mu(\{x: a(x)\neq 1\})>0, \\
\text{and}\quad a(x)\rightarrow 1\;\text{as}\;|x|\rightarrow\infty,$
\end{itemize}
where $\mu(X)$ denotes the Lebesgue measure of a set $X$.

\medskip 

Now we state our main theorems
		
		
\begin{theorem}\lab{th:ex-f}
Let  $a$ satisfy ${\bf (A_2)},$ $0\not\equiv f\in H^{-s}(\Rn)$ is a nonnegative functional and $S_1$ be defined as in \eqref{17-7-5}. Moreover, if 
$$ \|f\|_{H^{-s}(\Rn)}<C_pS_1^{\tfrac{p+1}{2(p-1)}} \quad\text{where}\quad C_p:=(p\|a\|_{L^\infty(\Rn)})^{-\frac{1}{p-1}}\big(\frac{p-1}{p}\big),$$
then \eqref{MAT1} admits at least two positive solutions.
\end{theorem}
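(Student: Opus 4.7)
My plan is to work with the functional
\[
I_{a,f}(u):=\tfrac{1}{2}\|u\|_{H^s(\mathbb{R}^N)}^2-\tfrac{1}{p+1}\int_{\mathbb{R}^N}a(x)\,u_+^{p+1}\,dx-\prescript{}{H^{-s}}{\langle} f,u{\rangle}_{H^s}
\]
on $H^s(\mathbb{R}^N)$, whose nontrivial critical points yield positive solutions of \eqref{MAT1}: any critical point satisfies $(-\Delta)^s u+u=a\,u_+^p+f\geq 0$, so $u\geq 0$ by the strong maximum principle for $(-\Delta)^s+1$, and then $u_+^p=u^p$. The two solutions are produced by the classical perturbative scheme---a first solution $u_0$ as a local minimum in a small ball near the origin, a second solution $u_1$ by a mountain pass anchored at $u_0$. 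The real obstruction is the failure of Palais--Smale compactness on $\mathbb{R}^N$, which I would handle via the Palais--Smale decomposition established earlier in the paper: any bounded PS sequence for $I_{a,f}$ decomposes, up to subsequence, into a strongly convergent piece plus a finite number of bubbles $w(\cdot-y^k_n)$ with $|y^k_n|\to\infty$, where $w$ is the unique (up to translations) ground state of \eqref{AT0.8}, each bubble contributing at least $m_\infty:=\max_{t\geq 0}I_\infty(tw)$ to the energy.

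For the first solution, Sobolev and H\"older yield $I_{a,f}(u)\geq \tfrac{1}{2}\rho^{2}-C_1\|a\|_\infty\rho^{p+1}-\|f\|_{H^{-s}}\rho$ on $\partial B_\rho$, and the quantitative smallness assumption $\|f\|_{H^{-s}}<C_pS_1^{(p+1)/(2(p-1))}$ is precisely what forces this lower bound to exceed some $\alpha>0$ on an appropriate sphere $\partial B_{\rho_0}$. Because $f\not\equiv 0$ is a nonnegative functional, a small multiple of any positive representative already produces $I_{a,f}<0$ inside $B_{\rho_0}$, whence $c_0:=\inf_{\overline{B_{\rho_0}}}I_{a,f}<0$. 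Ekeland's variational principle then furnishes a PS sequence at level $c_0$; since $c_0<0<m_\infty$, the PS decomposition rules out any bubble and the sequence converges strongly to a critical point $u_0\in B_{\rho_0}$ with $I_{a,f}(u_0)=c_0$.

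For the second solution I would apply the mountain pass theorem in
\[
\Gamma:=\bigl\{\gamma\in C([0,1];H^s(\mathbb{R}^N)):\gamma(0)=u_0,\ I_{a,f}(\gamma(1))<c_0\bigr\},
\]
which is non-empty because $I_{a,f}(u_0+Tw(\cdot-y))\to-\infty$ as $T\to\infty$. The min-max level $c_{MP}:=\inf_{\gamma\in\Gamma}\max_{t\in[0,1]}I_{a,f}(\gamma(t))$ is strictly greater than $c_0$ (every path must leave $B_{\rho_0}(u_0)$), and a standard deformation argument yields a PS sequence at $c_{MP}$. Appealing to the PS decomposition once more, promoting this sequence to a second critical point $u_1\ne u_0$ reduces to establishing the strict energy bound $c_{MP}<c_0+m_\infty$.

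This strict inequality is where I expect the main work to lie. I would test $c_{MP}$ against the one-parameter family of paths $\gamma_y(t):=u_0+tTw(\cdot-y)$ and expand $I_{a,f}(\gamma_y(t))$, using the Euler--Lagrange equation for $u_0$ to cancel the $\langle f,w(\cdot-y)\rangle$ contribution; the expansion reduces to
\[
c_0+\max_{s\in[0,T]}\Bigl\{\tfrac{s^2}{2}\|w\|_{H^s}^2-\tfrac{s^{p+1}}{p+1}\!\int_{\mathbb{R}^N}\!a(x)w(x-y)^{p+1}\,dx\Bigr\}+R(y,u_0),
\]
where $R$ collects the cross terms produced by expanding $(u_0+sw(\cdot-y))_+^{p+1}$ and vanishes as $\|u_0\|_{H^s}\to 0$. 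Under $\mathbf{(A_2)}$ the set $\{a>1\}$ has positive measure; choosing $y=y_*$ inside this set makes the explicit maximum strictly less than $m_\infty$ by a fixed positive gap, since the $a$-weighted Nehari denominator $\int a(x)w(x-y_*)^{p+1}dx$ strictly exceeds $\int w^{p+1}$. The smallness threshold on $\|f\|_{H^{-s}}$ forces $\|u_0\|_{H^s}$ to be small enough to absorb $R$ into that gap uniformly in $s\in[0,T]$, and the decay estimate \eqref{9-7-1} for $w$ together with a careful sign analysis of $R$ are the technical ingredients needed to close the argument; a distinct positive solution $u_1$ with $I_{a,f}(u_1)=c_{MP}>c_0$ then follows.
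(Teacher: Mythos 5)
Your architecture (a local minimum plus a mountain pass anchored at it, with compactness recovered strictly below the first bubbling level via Proposition \ref{PSP}) matches the paper's in spirit, but the step on which the stated constant hinges does not work. You assert that $\fhs<C_pS_1^{\frac{p+1}{2(p-1)}}$ ``is precisely what forces'' $\inf_{\|u\|_{\Hs}=\rho}I_{a,f}>0$ for a suitable sphere. It is not. The sphere bound is $I_{a,f}(u)\geq\frac12\rho^2-\frac{1}{p+1}\|a\|_{L^\infty(\Rn)}S_1^{-\frac{p+1}{2}}\rho^{p+1}-\fhs\,\rho$, and optimizing over $\rho>0$ shows this is positive for some $\rho$ only when $\fhs<\frac12\big(\frac{p+1}{2}\big)^{\frac{1}{p-1}}C_pS_1^{\frac{p+1}{2(p-1)}}$; since $\frac12\big(\frac{p+1}{2}\big)^{\frac{1}{p-1}}<\frac12 e^{1/2}<1$ for every $p>1$, there is a genuine range of $f$ admitted by the theorem for which your ball $B_{\rho_0}$ has no positive barrier on its boundary. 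Both the existence of the local minimizer and the strict separation $c_{MP}>c_0$ then fail as argued. The paper reaches the stated constant precisely by \emph{not} using a metric sphere: it separates $\Hs$ by the non-spherical set $U=\{g=0\}$ with $g(u)=\|u\|^2_{\Hs}-p\|a\|_{L^\infty(\Rn)}\|u\|^{p+1}_{L^{p+1}(\Rn)}$, and the quantity controlled on $U$ is not $I_{a,f}$ but $\frac{p-1}{p}\|u\|^2_{\Hs}-\prescript{}{H^{-s}}{\langle}f,u{\rangle}_{H^s}$ (Lemmas \ref{l:1-8-1}, \ref{l:30-7-1}, \ref{l:J1.3}); its positivity on $U$ is what the hypothesis delivers, and the level separation is $c_0=\inf_{U_1}I_{a,f}<c_1=\inf_{U}I_{a,f}\leq\gamma$. (The compactness step for the minimizer is also different there: one rules out bubbles by showing $g(w^i)<0$ for any bubble while $g\geq 0$ along the minimizing sequence, rather than by an energy comparison.) As written, your argument proves the theorem only under a strictly smaller bound on $\fhs$.

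Two further remarks. In the nonlocal setting the bubbles in the Palais--Smale decomposition are arbitrary nontrivial solutions of \eqref{AT0.8}, not translates of a unique $w$ (uniqueness of positive solutions is open for $s\in(0,1)$); your energy-threshold arguments survive only because every such bubble still satisfies $I_{1,0}(w_i)=\frac{p-1}{2(p+1)}\|w_i\|^2_{\Hs}\geq\frac{p-1}{2(p+1)}S_1^{\frac{p+1}{p-1}}=I_{1,0}(w^*)$, which you should state. Second, your treatment of the remainder $R$ in the mountain-pass energy estimate is more complicated than necessary and partly misdirected: after using $I'_{a,f}(u_0)=0$ to cancel the cross and $f$ terms, the remainder equals $-\frac{1}{p+1}\int_{\Rn}a(x)\big[(u_0+sw_y)^{p+1}-u_0^{p+1}-(p+1)su_0^pw_y-s^{p+1}w_y^{p+1}\big]\,{\rm d}x\leq 0$ by superadditivity, and $a\geq 1$ (assumption ${\bf (A_2)}$) then yields $I_{a,f}(u_0+sw_y)<I_{a,f}(u_0)+I_{1,0}(sw)\leq I_{a,f}(u_0)+I_{1,0}(w^*)$ for all $s>0$ and \emph{every} $y$, with no need to locate $y_*$ in $\{a>1\}$ or to invoke smallness of $\|u_0\|_{\Hs}$ or the decay \eqref{9-7-1}. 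This is exactly the mechanism of Claim 2 in Proposition \ref{p:31-7-2}, written there with dilations $w^*(x/t)$ in place of translations; the decay estimates are only needed for Theorem \ref{MT}, where $a\leq 1$ reverses the favorable sign.
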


%

\begin{theorem}\label{MT}
Suppose  
$a$ satisfies ${\bf (A_1)}$ and

\be\label{AT0.15}
1-a(x) \leq \frac{C}{1+|x|^{\mu(N+2s)}} \quad \forall\, x\in\Rn,	
\ee

for some $\mu>p+1+\frac{N}{N+2s}$. Then there exists $\delta_{0}>0$ such that for any $ 0 \not\equiv f\in H^{-s}(\Rn)$  
with $f$ is a non-negative functional and $\|f\|_{H^{-s}(\Rn)}\leq \delta_{0}$, problem \eqref{MAT1} admits at least three positive solutions.
\end{theorem}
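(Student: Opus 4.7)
\textbf{Proof proposal for Theorem \ref{MT}.}

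The three positive solutions will be produced as critical points of
\[
I_{a,f}(u) \;=\; \tfrac{1}{2}\|u\|_{H^s(\Rn)}^2 \;-\; \tfrac{1}{p+1}\int_{\Rn} a(x)\, u_+^{p+1}\,dx \;-\; \prescript{}{H^{-s}}{\langle} f, u\rangle_{H^s}, \qquad u \in H^s(\Rn).
\]
Positivity is enforced by testing $I'_{a,f}(u)$ against $u_-$ and using $f \ge 0$ together with the strong maximum principle for $(-\Delta)^s+1$. Denote by $m_\infty$ the ground-state energy of \eqref{AT0.8}.

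\emph{First solution (local minimum).} For $\|f\|_{H^{-s}}$ small, the Sobolev embedding $H^s\hookrightarrow L^{p+1}$ produces radii $0<\rho_1<\rho_2$ with $\inf_{\overline{B_{\rho_1}}} I_{a,f}<0<\inf_{\partial B_{\rho_2}} I_{a,f}$. Minimising $I_{a,f}$ over $\overline{B_{\rho_2}}$ by Ekeland's variational principle yields a positive critical point $u_0=u_{locmin}(a,f;x)$ with $\|u_0\|_{H^s}<\rho_2$ and $I_{a,f}(u_0)<0$.

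\emph{Second solution (mountain pass).} Since $I_{a,f}$ has mountain pass geometry around $u_0$, let $\Gamma$ be the class of continuous paths from $u_0$ to a far point where $I_{a,f}<I_{a,f}(u_0)$ and set $c_1:=\inf_{\gamma\in\Gamma}\max_{t\in[0,1]} I_{a,f}(\gamma(t))$. Testing on the path $t\mapsto u_0+t\,w(\cdot-y)$ with $w$ the ground state of \eqref{AT0.8} and $|y|$ large, and using $a(x)\to 1$ plus $a\le 1$, one obtains the strict bound
\[
c_1 \;<\; I_{a,f}(u_0)+m_\infty.
\]
A Palais--Smale decomposition for $I_{a,f}$ in the spirit of \cite{Frank} certifies that $(PS)_c$ holds for every $c\in\bigl(I_{a,f}(u_0),\,I_{a,f}(u_0)+m_\infty\bigr)$, so the mountain pass theorem produces a second positive solution $u_1$ with $I_{a,f}(u_1)=c_1$.

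\emph{Third solution (min-max with category).} The third critical point is sought at a min-max level in the window $\bigl(I_{a,f}(u_0)+m_\infty,\,I_{a,f}(u_0)+2m_\infty\bigr)$. For $R\gg 1$ and $T>0$ tuned so that $I_{a,f}$ is negative on the resulting boundary sphere, introduce the class
\[
\Sigma_R \;:=\; \Bigl\{\gamma\in C(\overline{B_1(0)},H^s(\Rn))\;:\; \gamma(\theta)=u_0+T\,w(\cdot-R\theta)\ \text{for every }\theta\in S^{N-1}\Bigr\},
\]
and let $c_2:=\inf_{\gamma\in\Sigma_R}\max_{\xi\in\overline{B_1}} I_{a,f}(\gamma(\xi))$. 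A Brouwer-degree / Lusternik--Schnirelman category argument linking with the nontrivial topology of $S^{N-1}$ forces $c_2\ge I_{a,f}(u_0)+m_\infty$ and prevents the critical point at level $c_2$ from being a translate of $u_1$. Since $c_1<I_{a,f}(u_0)+m_\infty\le c_2$, the resulting solution $u_2$ is distinct from both $u_0$ and $u_1$, completing the count.

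\emph{Main obstacle.} The delicate step is the matching strict upper bound
\[
c_2 \;<\; I_{a,f}(u_0)+2 m_\infty,
\]
needed to stay below the second compactness threshold of the PS-decomposition. Evaluating $I_{a,f}$ on the natural two-bump test function $u_0+T\,w(\cdot-R\theta)+T\,w(\cdot+R\theta)$ produces competing terms: the positive interaction between the two translated ground states is of order $R^{-(N+2s)}$ by the sharp decay \eqref{9-7-1}, while the favourable contribution from the defect $1-a(x)$ against one of the bumps decays like $R^{-\mu(N+2s)}$ followed by a correction coming from the $(p+1)$-th power of $w$. A careful expansion shows that the defect dominates the interaction precisely when $\mu>p+1+\tfrac{N}{N+2s}$, which is the content of \eqref{AT0.15}. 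This is where all the quantitative hypotheses of the theorem are consumed, and is the main place where new fractional estimates, in particular the sharp two-bump computation using \eqref{9-7-1}, must replace the exponential-decay arguments available in the local case $s=1$.
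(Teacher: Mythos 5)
Your first solution (the local minimum near the origin) matches the paper's Proposition \ref{ATl1.4}, and a second solution at a mountain-pass level strictly below $I_{a,f}(u_0)+I_{1,0}(w^*)$ is recoverable. But your justification of the strict bound $c_1<I_{a,f}(u_0)+m_\infty$ from ``$a(x)\to 1$ plus $a\le 1$'' is too quick: under ${\bf (A_1)}$ the sign of $1-a$ works \emph{against} you, and the strict inequality is precisely the content of the quantitative single-bump estimate of Proposition \ref{main-energy-estimate}. That is also where the hypothesis $\mu>p+1+\tfrac{N}{N+2s}$ is actually consumed: one must show that the unfavourable defect term $\int(1-a)\,w^*(x-y)^{p+1}$ is $O(|y|^{-(p+1)(N+2s)})$, so that it is dominated by the interaction term of order $|y|^{-2(N+2s)}$ coming from the convexity inequality, using $p+1>2$. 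It is not consumed in a two-bump computation, as you suggest.

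The genuine gap is in your third solution. You place it at a min-max level in the window $\bigl(I_{a,f}(u_0)+m_\infty,\ I_{a,f}(u_0)+2m_\infty\bigr)$ and assert that $(PS)_c$ holds throughout that window modulo a two-bump upper bound. In the fractional setting this cannot be certified: the Palais--Smale decomposition (Proposition \ref{PSP}) only says that breakdown occurs at levels $I_{a,f}(\bar u)+\sum_i I_{1,0}(w_i)$ where the $w_i$ are \emph{arbitrary} nontrivial positive solutions of \eqref{AT0.8}, and for $s\in(0,1)$ only uniqueness of the \emph{ground state} is known (\cite{Frank}), not uniqueness of positive solutions. Hence there may exist positive solutions $w$ of \eqref{AT0.8} with $I_{1,0}(w)$ anywhere in $(m_\infty,2m_\infty)$, and compactness may fail at levels scattered through your window; the introduction of the paper flags exactly this as the reason the Bahri--Li scheme cannot be transplanted to $s\in(0,1)$. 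The paper's route never goes above the first threshold: it works with $J_{a,f}$ on $\tilde\Sigma_+$, proves $(PS)_c$ for every $c<I_{a,f}(u_0)+I_{1,0}(w^*)$ (Corollary \ref{corr-ps}), and shows via the map $F_R$ built from translated ground states together with the center-of-mass map $G$ (Lemma \ref{ATL1.12}) that the sublevel set $\{v\in\tilde\Sigma_+ : J_{a,f}(v)\le I_{a,f}(u_0)+I_{1,0}(w^*)-\varepsilon_0\}$ has Lusternik--Schnirelman category at least $2$ (Proposition \ref{p:15-8-1}). This yields \emph{two} critical points strictly below the first breakdown level, which together with the local minimum (whose energy is negative, hence distinct) gives three positive solutions. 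To repair your argument you would have to either rule out positive non-ground-state solutions of \eqref{AT0.8} with energy below $2m_\infty$ (open) or, as the paper does, obtain the extra critical point by a topological argument confined below the first threshold.
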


\begin{remark}		
For the above two Theorems, it was necessary that $\|f\|_{H^{-s}(\Rn)}$ sufficiently small but $f\not\equiv 0$. In contrast, our next existence result holds in the case when $f\equiv 0$. 
\end{remark}

\begin{theorem}\lab{th:1}
Let $f\equiv 0$, $0< a\in L^\infty(\Rn)$ and there exists $a_0>0$ such that
  $$ \lim_{|x|\to\infty} a(x)= a_0=\inf_{x \in \Rn} a(x).$$

%

Then, there exists a positive solution to \eqref{MAT1} for every  $1<p<2_s^*-1$. 		
\end{theorem}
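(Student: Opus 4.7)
\textbf{Plan for the proof of Theorem \ref{th:1}.} I would work with the $C^{1}$ functional
$$I(u)=\tfrac{1}{2}\|u\|_{H^{s}(\Rn)}^{2}-\tfrac{1}{p+1}\int_{\Rn}a(x)u_{+}^{p+1}\,dx,\qquad u\in H^{s}(\Rn),$$
whose nontrivial critical points produce positive solutions of \eqref{MAT1} with $f\equiv 0$ (nonnegativity by a standard truncation argument against $u_{-}$, strict positivity by the strong maximum principle for $(-\Delta)^{s}+1$). Since $1<p<2_{s}^{*}-1$ and $a\in L^{\infty}(\Rn)$, the fractional Sobolev embedding gives mountain-pass geometry: there exist $\rho,\alpha>0$ with $I(u)\geq\alpha$ on $\|u\|_{H^{s}}=\rho$, and $I(tw)\to-\infty$ as $t\to\infty$ for any fixed $0\leq w\not\equiv 0$. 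Set
$$c=\inf_{\gamma\in\Gamma}\max_{t\in[0,1]}I(\gamma(t)),\qquad \Gamma=\{\gamma\in C([0,1];H^{s}(\Rn))\,:\,\gamma(0)=0,\;I(\gamma(1))<0\}.$$

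I would split into two subcases. If $a\equiv a_{0}$, the equation becomes $(-\Delta)^{s}u+u=a_{0}u^{p}$, and a positive solution is obtained by rescaling the Frank--Lenzmann--Silvestre ground state of \eqref{AT0.8}. Assume henceforth $a\not\equiv a_{0}$. Let $w_{0}>0$ be the positive radial ground state of the \emph{limit problem at infinity}
$$(-\Delta)^{s}w+w=a_{0}w^{p}\quad\text{in }\Rn,$$
with associated functional $I^{\infty}(u)=\tfrac{1}{2}\|u\|_{H^{s}}^{2}-\tfrac{a_{0}}{p+1}\int u_{+}^{p+1}$ and mountain-pass/ground-state level $m_{\infty}=\tfrac{p-1}{2(p+1)}\|w_{0}\|_{H^{s}}^{2}>0$ (using $\|w_{0}\|_{H^{s}}^{2}=a_{0}\int w_{0}^{p+1}$).

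The central estimate is the strict bound $c<m_{\infty}$. Since $a\geq a_{0}$, $a\not\equiv a_{0}$, and $w_{0}>0$ everywhere by \eqref{9-7-1}, we have $A:=\int a\,w_{0}^{p+1}>a_{0}\int w_{0}^{p+1}=\|w_{0}\|_{H^{s}}^{2}$. A direct optimisation of the fibre map $t\mapsto I(tw_{0})$ then yields
$$\max_{t\geq 0}I(tw_{0})=\frac{p-1}{2(p+1)}\left(\frac{\|w_{0}\|_{H^{s}}^{2}}{A}\right)^{2/(p-1)}\|w_{0}\|_{H^{s}}^{2}<m_{\infty},$$
and extending the ray $t\mapsto tw_{0}$ past a point where $I<0$ provides an admissible path in $\Gamma$, so $c<m_{\infty}$.

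To conclude, I would invoke the profile/Palais--Smale decomposition for subcritical fractional Schr\"odinger equations (the nonlocal Benci--Cerami/Struwe-type lemma, developed earlier in the paper for the proofs of Theorems \ref{th:ex-f} and \ref{MT}): for any PS-sequence $(u_{n})$ of $I$ at level $c$, up to a subsequence $u_{n}\rightharpoonup u_{0}$ with $I'(u_{0})=0$ and
$$u_{n}-u_{0}-\sum_{k=1}^{\ell}w_{k}(\cdot-y^{n}_{k})\to 0\text{ in }H^{s}(\Rn),\qquad c=I(u_{0})+\sum_{k=1}^{\ell}I^{\infty}(w_{k}),$$
where each $w_{k}$ is a nontrivial solution of the limit problem and $|y^{n}_{k}|\to\infty$. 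Testing $I'(u_{0})=0$ against $u_{0}$ gives $I(u_{0})=\tfrac{p-1}{2(p+1)}\|u_{0}\|_{H^{s}}^{2}\geq 0$, while each nontrivial $w_{k}$ satisfies $I^{\infty}(w_{k})\geq m_{\infty}$. The strict bound $c<m_{\infty}$ therefore forces $\ell=0$ and $u_{0}\not\equiv 0$; hence $u_{n}\to u_{0}$ strongly in $H^{s}(\Rn)$, and $u_{0}$ is the desired positive solution. The principal obstacle is precisely securing the strict energy inequality $c<m_{\infty}$, which rests on the pointwise positivity of $w_{0}$ together with the hypothesis $a\not\equiv a_{0}$; once this is in place, the nonlocal profile decomposition immediately delivers the compactness needed.
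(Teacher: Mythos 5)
Your proposal is correct, but it takes a genuinely different route from the paper. The paper follows the Ding--Ni strategy: it solves the Dirichlet problems $(\mathcal P_k)$ on balls $B_k$ by the mountain pass theorem, extracts a weak limit $\bar u$ of the uniformly bounded solutions $u_k$, and then rules out $\bar u\equiv 0$ by a comparison argument between the Nehari level $\al^*$ of $I_{a,0}$ and the Nehari levels $\ba_R^*$ of the exterior functionals $I_{h_R,0}$ (Lemmas \ref{l:9-10-1}--\ref{l:9-11-1}); no Palais--Smale decomposition is used. You instead run the mountain pass directly on $\Rn$ and restore compactness via the profile decomposition of Proposition \ref{PSP} (rescaled so that the limit equation is $(-\De)^s w+w=a_0w^p$), using the strict inequality $c<m_\infty$. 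Your use of the decomposition is legitimate here: the obstruction the authors flag in the introduction -- non-uniqueness of positive solutions of \eqref{AT0.8} -- only prevents one from excluding bubbling at levels \emph{at or above} the first threshold $m_\infty$, whereas the hypothesis $a\geq a_0=\lim_{|x|\to\infty}a(x)$ (with $a\not\equiv a_0$, the case $a\equiv a_0$ being handled by rescaling $w^*$) pushes the min--max level strictly below $m_\infty$, where every nontrivial bubble costs at least the ground-state energy regardless of uniqueness. Note that the two proofs hinge on the same strict inequality: the paper's Lemma \ref{l:9-10-1} is the Nehari-manifold formulation of your bound $c<m_\infty$, proved there by showing that $\sup_{\|u\|_{\Hs}=1}\int_{\Rn}a_0u_+^{p+1}\,{\rm d}x$ is attained (via rearrangement and the compact radial embedding), which is equivalent to your direct computation of $\max_{t>0}I(tw_0)$ since the maximizer is the normalized ground state. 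Your argument is shorter and more classical; the paper's avoids the decomposition machinery altogether and produces the solution as a limit of solutions of Dirichlet problems, at the price of the extra comparison lemmas.
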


\medskip 

Like in the local case, it is well known that the Sobolev embedding 

$$
H^s(\Rn) \hookrightarrow L^{p}(\Rn) \quad \mbox{for} \ 2 \leq p \leq \frac{2N}{N - 2s},
$$
 is not compact. Thus the variational functional associated with \eqref{MAT1} fails to satisfy the Palais-Smale (PS) condition.  The lack of compactness becomes clear when one looks at the special case \eqref{AT0.8}. Solutions of \eqref{AT0.8} are invariant under translation and therefore, it is not compact. Thus the standard variational technique can not be applied directly. The existence and multiplicity results obtained in the local case were based on the careful analysis of the Palais-Smale level. However one of the major differences in the nonlocal case $s\in(0,1)$ 
  with the local case $s=1$ is due to the difference in Palais-Smale decomposition theorem. 
  
  \medskip
  
  In the case of $s=1$, we see that Palais-Smale condition holds for $\bar I_{a,f}$ (see Section 2 for the definitions) at level $c$ if $c$ can not 
  be decomposed as $c=\bar I_{a,f}(\bar{u})+k\bar I_{1,0}(w)$, where $k\geq 1$, $\bar u$ is a solution of \eqref{MAT1} and $w$ is the unique 
  radial solution of \eqref{AT0.8} (with $s=1$). But in the case of $s\in(0,1)$, uniqueness of positive solution of \eqref{AT0.8} is not yet known, 
  only the uniqueness of ground state solution is known (\cite{Frank}). Therefore, studying the Palais-Smale decomposition theorem (see Proposition \ref{PSP}), 
  we can not exclude the possibility of breaking down of Palais-Smale condition at the level $c$ for 
$c\in \big(\bar I_{a,f}(u)+\bar I_{1,0}(w^*),\, \bar I_{a,f}(u)+ 2\bar I_{1,0}(w^*)\big)$, where $w^*$ is the unique ground state solution of \eqref{AT0.8} and $u$ is any positive solution of \eqref{MAT1}. 
Thus one can not argue using Palais-Smale decomposition to obtain positive solutions to \eqref{MAT1} whose energy level 
is strictly greater than $\bar I_{a,f}(u)+\bar I_{1,0}(w^*)$.
 For the same reason, arguments of Bahri-Li \cite{Bahri-Li} can not be adopted here to prove 
Theorem \ref{th:1} even if we assume $\lim_{|x|\to\infty}a(x)=1$.

\medskip 

It is worth mentioning about the novelty of the paper. In the local case $s = 1$,  solutions of \eqref{AT0.8} has exponential decay, where as for $s\in(0,1)$, solutions of \eqref{AT0.8} has polynomial decay of the rate $|x|^{-(N+2s)}.$ Thus it is not at all straight forward to guess that the energy estimates would stay in the desired level in the nonlocal case 
 and hence deriving such estimates require a very careful analysis. Due to this fact we are able to prove Theorem \ref{MT} under much 
 weaker growth rate assumption of $a$ at infinity (see \eqref{AT0.15}) compared to the local case $s=1$(see \cite{Adachi}), where it was assumed 
		$$1-a(x)\leq C\exp\big(-(2+\de)|x|\big)\quad\text{for all}\, x\in\Rn,$$
		for some constant $\de>0, C>0$.
		

\medskip 

Now let us briefly explain the methodology to obtain our results. 
To prove Theorem \ref{th:ex-f}, we first decompose $\Hs$ into three components which are homeomorphic to the interior, boundary and the exterior of the unit ball in $\Hs$ respectively.  Then using assumption $(A_2)$, we prove that the energy functional associated to \eqref{MAT1} attains its infimum on one of the components which serves as our first positive solution.  The second positive solution is obtained via a careful analysis on the (PS)-sequence associated to the energy functional and we construct a mim-max critical level $\ga$, where the (PS) condition holds.  That leads to the existence of second positive solution.

\medskip 
				
To prove Theorem \ref{MT}, we establish existence of first positive solution as a perturbation of $0$ (which actually solves the problem for $f\equiv 0$) via Mountain Pass theorem. We obtain the second and third solutions of \eqref{MAT1} using Lusternik-Schnirelman category where the main problem lies in the breaking down of Palais-Smale condition at some level $c$ and we have proved that below the level of breaking down of Palais Smale condition  there are two other critical points of the energy functional associated to \eqref{MAT1}. 
	
\medskip 
			
In order to prove Theorem \ref{th:1}, we first establish existence of a positive solution $u_k$ to the following problem: 
\begin{equation*}
\left\{\begin{aligned}
		(-\Delta)^s u + u &= a(x) |u|^{p-1}u \;\;\text{in}\;B_k,\\
		u &=0 \quad\text{in}\quad \Rn\setminus B_k,
		 \end{aligned}
  \right.
\end{equation*}
where $B_k$ is the ball of radius $k$ centered at $0$. Then we show $\|u_k\|_{\Hs}$ is uniformly bounded and there exists $0\leq \bar u\in \Hs$ such that up to a subsequence $u_k\rightharpoonup \bar u$ in $\Hs$ and $\bar u$ is a positive solution of \eqref{MAT1}. The main difficulty in this proof lies in  showing that $\bar u$ i.e., the weak limit of the subsequence $u_k$ is a nontrivial element in $\Hs$. 

\vspace{2mm}		
			
This paper has been organised in the following way:
In Section 2, we prove the Palais-Smale decomposition theorem associated with the functional corresponding to \eqref{MAT1}. 
In Section 3, we show existence of two positive solutions of \eqref{MAT1} under the assumption $(A_2)$, namely Theorem \ref{th:ex-f}. In Section 4, we prove Theorem \ref{MT}. In section 5, we prove Theorem \ref{th:1}.

\vspace{2mm}
	
	{\bf Notation:} In this paper $C$ denotes the generic constant which may vary from line to line. We denote $u_+(x):=\max\{u(x), 0\}$ and $u_-(x):=-\min\{u(x), 0\}$. Therefore, according to our notation $u=u_+-u_-$. By $w^*$, we denote the unique ground state solution of \eqref{AT0.8}.
	
	\section{Palais-Smale  characterization}
	
%
%
%
%

In this section we study the Palais-Smale sequences (in short, PS sequences) of the functional associated to \eqref{MAT1}.

	\bea\label{EF}
			\bar I_{a,f}(u)&=& \frac{1}{2}\iint_{\R^{2N}}\frac{|u(x) - u(y)|^2}{|x-y|^{N+2s}}\,{\rm d}x\,{\rm d}y + \frac{1}{2}\int_{\R^N}|u|^2 \,{\rm d}x - \frac{1}{p+1}\int_{\R^N}a(x)|u|^{p+1}\,{\rm d}x-\prescript{}{H^{-s}}{\langle}f,u{\rangle}_{H^s}\no\\
&=&\frac{1}{2}\|u\|^{2}_{H^s(\R^N)}-\frac{1}{p+1}\int_{\R^N}a(x)|u|^{p+1}\,{\rm d}x -\prescript{}{H^{-s}}{\langle}f,u{\rangle}_{H^s},
		\eea
where $0<a\in L^\infty(\Rn)$, $a(x)\to 1$ as $|x|\to\infty$ and $0\not\equiv f\in H^{-s}(\Rn)$ is a nonnegative functional i.e., $\prescript{}{H^{-s}}{\langle}f,u{\rangle}_{H^s}\geq 0$ whenever $u\geq 0$.

We say that the sequence $u_k\in H^s(\R^N)$ is a PS sequence for $\bar I_{a,f}$ at level $\ba$ if $\bar I_{a,f}(u_k)\to \ba$ and $(\bar I_{a,f})'(u_k)\to 0$ in $H^{-s}(\Rn)$. It is easy to see that the weak limit of a PS sequence solves \eqref{MAT1} (with $f\equiv 0$) except the positivity.

However the main
difficulty is that the PS sequence may not converge strongly and hence the weak limit can be zero even if $\ba>0.$
 The main purpose of this section is to classify PS sequences for 
the functional $\bar I_{a,f}$. Classification of PS
sequences has been done for various problems having lack of compactness, to quote a few, we cite \cite{Bahri, BC, Lions}. We establish
 a classification theorem for the PS sequences of \eqref{EF} in the spirit of the above results.\\

{\bf Throughout this section we assume $a(x)\to 1$ as $|x|\to\infty$.}		
	
	\begin{proposition}\label{PSP}
				
Let $\{u_k\}\subset H^s(\R^N)$ be a PS sequence for $\bar I_{a,f}$. Then there exists a subsequence (still denoted by $u_k$) for which the following hold : \\
		there exists an integer $m\geq 0$, sequences $x_{k}^{i}$ for $1\leq i \leq m$, functions $\bar{u},\;w_{i}$ for $1\leq i\leq m$ such that
		\be
		(-\De)^s\bar{u} + \bar{u} = a(x){|\bar u|}^{p-1}\bar{u}+f \quad\text{in}\quad\R^N
		\ee
		
		\be
		\begin{split}
			(-\De)^s w_i + w_i = w_{i}^p \;\text{in}\;\R^N\\
			w_i \in H^s(\R^N), \;w_i \nequiv 0
		\end{split}
		\ee
		
		\be
		\begin{split}
			u_k-\big(\bar{u} +\sum_{i=1}^{m}w_i(\bullet-x_{k}^{i})\big) \rightarrow 0 \;\text{as}\;k\rightarrow\infty\\
			\bar I_{a,f}(u_k)\rightarrow \bar I_{a,f}(\bar{u})+\sum_{i=1}^{m}\bar I_{1,0}(w_i)\;\text{as}\;k\rightarrow \infty
		\end{split}
		\ee
		
		\be
		|x_{k}^{i}|\rightarrow\infty, \;|x_{k}^{i}-x_{k}^{j}|\rightarrow\infty\;\text{as}\;k\rightarrow\infty,\;\text{for}\;1\leq i\neq j\leq m,
		\ee
		\noi where we agree in the case $m=0$, the above holds without $w_i, x_{k}^{i}.$
%
		In addition if $u_k\geq 0,$ then $\bar{u}\geq 0$ and $w_i\geq 0$, for all $1\leq i \leq m$. 
		\end{proposition}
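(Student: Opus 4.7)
The plan is to follow the standard profile decomposition scheme adapted to the fractional setting, in the spirit of the concentration–compactness literature cited in the introduction. First, I show that every PS sequence $\{u_k\}$ at level $\beta$ is bounded in $H^s(\R^N)$: combining $\bar I_{a,f}(u_k)\to\beta$ with $\langle \bar I'_{a,f}(u_k),u_k\rangle = o(\|u_k\|_{H^s(\Rn)})$, and forming $\bar I_{a,f}(u_k)-\tfrac{1}{p+1}\langle \bar I'_{a,f}(u_k),u_k\rangle = \tfrac{p-1}{2(p+1)}\|u_k\|_{H^s}^2 - \tfrac{p}{p+1}\langle f,u_k\rangle$ yields a uniform bound via the quadratic inequality and $\langle f,u\rangle\le \|f\|_{H^{-s}(\Rn)}\|u\|_{H^s}$. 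Passing to a subsequence, $u_k\rightharpoonup \bar u$ weakly in $H^s$, strongly in $L^{q}_{\mathrm{loc}}$ for $q\in[1,2_s^*)$, and a.e.\ on $\Rn$. Testing $\bar I'_{a,f}(u_k)$ against $\phi\in C_c^\infty(\Rn)$ and invoking a.e.\ convergence together with subcritical growth of the nonlinearity shows that $\bar u$ satisfies $(-\Delta)^s\bar u+\bar u = a(x)|\bar u|^{p-1}\bar u+f$.

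Next I set $v_k^{0}:=u_k-\bar u$ and employ the Brezis--Lieb identity for the Gagliardo seminorm together with the classical $L^{p+1}$ Brezis--Lieb lemma to obtain
\begin{equation*}
\bar I_{a,f}(u_k) = \bar I_{a,f}(\bar u) + \bar I_{1,0}(v_k^{0}) + o(1),
\end{equation*}
where the replacement of $a(x)$ by $1$ in the nonlinear term is justified because $a\to 1$ at infinity and $v_k^{0}\rightharpoonup 0$, so that $\int(a-1)|v_k^{0}|^{p+1}\,dx\to 0$ by splitting $\Rn=B_R\cup(B_R)^c$ and using local $L^{p+1}$ compactness together with the smallness of $a-1$ outside large balls. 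A parallel argument gives $\bar I'_{1,0}(v_k^{0})\to 0$ in $H^{-s}(\Rn)$, so $\{v_k^{0}\}$ is a PS sequence for the translation–invariant limit functional. If $v_k^{0}\to 0$ strongly, the proposition holds with $m=0$. Otherwise, a Lions-type non-vanishing lemma adapted to the fractional norm supplies $R,\eta>0$ and shifts $x_k^{1}$ with $\int_{B_R(x_k^1)}|v_k^{0}|^2\,dx\ge\eta$ for all large $k$, so up to a subsequence $v_k^{0}(\cdot+x_k^{1})\rightharpoonup w_1\not\equiv 0$ in $H^s$. Translation invariance of $\bar I_{1,0}$ forces $w_1$ to solve $(-\Delta)^s w + w = |w|^{p-1}w$, and $|x_k^{1}|\to\infty$, since a bounded subsequence would contradict $v_k^{0}\rightharpoonup 0$.

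Now I iterate: set $v_k^{1}(x):=v_k^{0}(x)-w_1(x-x_k^{1})$ and apply the same Brezis--Lieb decomposition to obtain
\begin{equation*}
\bar I_{1,0}(v_k^{0}) = \bar I_{1,0}(w_1) + \bar I_{1,0}(v_k^{1}) + o(1),\qquad \bar I'_{1,0}(v_k^{1})\to 0.
\end{equation*}
The orthogonality $|x_k^{j}-x_k^{i}|\to\infty$ is forced automatically at each subsequent stage, because a bounded $x_k^{j}-x_k^{i}$ would produce a nontrivial weak limit of $v_k^{i-1}(\cdot+x_k^{j})$, contradicting its construction as a residual. Since every nontrivial $H^s$-solution of the limit problem has energy at least $\bar I_{1,0}(w^*)>0$ and the energy strictly drops by this amount at each extraction, the process terminates after at most $\lceil (\beta-\bar I_{a,f}(\bar u))/\bar I_{1,0}(w^*)\rceil$ steps, producing a finite $m$. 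For the positivity addendum, if $u_k\ge 0$ then $\bar u\ge 0$ follows from a.e.\ convergence, and each $w_i\ge 0$ because it is the weak $H^s$-limit of the translated nonnegative residuals $v_k^{i-1}(\cdot+x_k^{i})$.

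The principal technical obstacle is the fractional Brezis--Lieb identity for splittings of the form $u_k=\bar u+\sum_{i}w_i(\cdot-x_k^{i})+r_k$: one must verify that the many bilinear cross terms arising in $\iint|u_k(x)-u_k(y)|^2|x-y|^{-N-2s}\,dx\,dy$ vanish in the limit, using both the weak convergence of the residuals to zero and the mutual escape $|x_k^{i}|, |x_k^{i}-x_k^{j}|\to\infty$ together with the polynomial decay \eqref{9-7-1} of the profiles. The analogous control of the nonlinear interaction terms is equally delicate, since the merely polynomial decay of the profiles in the fractional case contrasts sharply with the exponential decay in the local case $s=1$; one relies on the elementary pointwise inequality for $|a+b|^{p+1}-|a|^{p+1}-|b|^{p+1}$ combined with H\"older and the uniform $H^s$-bound to absorb these cross terms into $o(1)$.
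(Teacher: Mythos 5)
Your proposal is correct in substance and follows the same overall scheme as the paper: boundedness of the PS sequence, identification of the weak limit as a solution, passage to the residual via Brezis--Lieb, a Lions-type non-vanishing alternative to extract escaping profiles, and iteration. Two points where you genuinely diverge from the paper's proof are worth recording. First, you replace $\bar I_{a,0}$ by the constant-coefficient functional $\bar I_{1,0}$ already at the level of the first residual, justified by $\int_{\Rn}(a-1)|v_k^0|^{p+1}\,{\rm d}x\to 0$ (splitting into $B_R$ and its complement); the paper instead keeps $\bar I_{a,0}$ for the untranslated residual and only invokes $a(x+y_k)\to 1$ after recentering at $y_k$ with $|y_k|\to\infty$ (its Step 5 and Claim 2). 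Both are valid; note that for the gradient statement $\bar I_{1,0}'(v_k^0)\to 0$ you need the slightly stronger fact that $(a-1)|v_k^0|^{p-1}v_k^0\to 0$ in $L^{(p+1)/p}(\Rn)$, which the same splitting gives. Second, your termination argument uses energy quantization: every nontrivial solution of the limit equation satisfies $\|w\|^2_{\Hs}=\int_{\Rn}|w|^{p+1}\,{\rm d}x$, hence $\|w\|^2_{\Hs}\geq S_1^{(p+1)/(p-1)}$ and $\bar I_{1,0}(w)\geq \frac{p-1}{2(p+1)}S_1^{(p+1)/(p-1)}>0$. The paper instead terminates via an $L^2$-quantization (its Lemma 2.3: a solution with bounded $\Hs$-norm and small $L^2$-norm is trivial, combined with $\sum_i\|\tilde u_i\|^2_{L^2}\leq\liminf_k\|u_k-u\|^2_{L^2}$). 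Your route is cleaner and is exactly the lower bound used elsewhere in the paper (Remark 3.4), so nothing is lost.

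One imprecision in your positivity addendum: the residuals $v_k^{i-1}=u_k-\bar u-\sum_{j<i}w_j(\cdot-x_k^j)$ are \emph{not} nonnegative even when $u_k\geq 0$, since you are subtracting nonnegative functions; so ``weak limit of nonnegative residuals'' is not literally available. The correct (and short) fix is to observe that after translating by $x_k^i$, each subtracted term converges weakly to zero in $\Hs$ --- $\bar u(\cdot+x_k^i)\rightharpoonup 0$ because $|x_k^i|\to\infty$, and $w_j(\cdot-x_k^j+x_k^i)\rightharpoonup 0$ because $|x_k^i-x_k^j|\to\infty$ --- so $w_i$ is also the weak limit of $u_k(\cdot+x_k^i)\geq 0$, and weak limits of nonnegative functions are nonnegative. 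With that one-line repair the addendum is complete.
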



To prove the above proposition, we first need some auxiliary lemmas.

\begin{lemma}\label{L1}
	Let $t>0$ and $2\leq q<2^*_s$. If $\{w_k\}$ is a bounded sequence in $\Hs$ and if 
	$$\sup_{y\in \R^N}\int_{B(y, t)} |w_k|^q {\rm d}x\longrightarrow 0\quad \text{as}\quad k\rightarrow \infty,$$
then $w_k\rightarrow 0$ in $L^r(\R^N)$ for all $r\in(2, 2_s^*)$.
\noi In addition, if $w_k$ satisfies 
\be\lab{9-7-2}
(-\De)^s w_k + w_k -a(x)|w_k|^{p-1}w_k-f  \longrightarrow 0\quad \text{in}\quad H^{-s}(\R^N),
\ee
then $w_k\rightarrow 0$ in $\Hs.$
\end{lemma}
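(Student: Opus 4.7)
The first assertion is the $H^s(\R^N)$ analog of P.-L.~Lions's vanishing lemma. My plan is the following. First, reduce to the case $q=2$: by Hölder's inequality on any ball of radius $t$,
\[
\sup_{y\in\R^N}\int_{B(y,t)}|w_k|^2\,{\rm d}x \;\leq\; C_t\,\Bigl(\sup_{y\in\R^N}\int_{B(y,t)}|w_k|^q\,{\rm d}x\Bigr)^{2/q} \;\longrightarrow\; 0.
\]
Next, cover $\R^N$ by a family of balls $\{B_j=B(y_j,t)\}$ whose overlap is bounded by a constant $\kappa_N$ depending only on $N$. On each $B_j$, combine the local Hölder interpolation
\[
\|w_k\|_{L^r(B_j)}^r \;\leq\; \|w_k\|_{L^2(B_j)}^{r(1-\lambda)}\,\|w_k\|_{L^{2_s^*}(B_j)}^{r\lambda}, \qquad \tfrac{1}{r}=\tfrac{1-\lambda}{2}+\tfrac{\lambda}{2_s^*},
\]
with the local fractional Sobolev inequality $\|w\|_{L^{2_s^*}(B_j)}\leq C\|w\|_{H^s(2B_j)}$, proved via a standard cutoff argument. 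Pulling out the factor $\sup_y\|w_k\|_{L^2(B(y,t))}$ from the $L^2$ piece and, provided $r\lambda\geq 2$ (equivalently $r\geq 2+\tfrac{4s}{N}$), writing $\|w\|_{H^s(2B_j)}^{r\lambda}\leq\|w\|_{H^s(\R^N)}^{r\lambda-2}\|w\|_{H^s(2B_j)}^{2}$ and summing using bounded overlap, one gets $w_k\to 0$ in $L^r(\R^N)$ for every $r\in[2+\tfrac{4s}{N},\,2_s^*)$. The remaining exponents $r\in(2,\,2+\tfrac{4s}{N})$ are recovered from a final global interpolation
\[
\|w_k\|_{L^r(\R^N)} \;\leq\; \|w_k\|_{L^2(\R^N)}^{\mu}\,\|w_k\|_{L^{r'}(\R^N)}^{1-\mu}, \qquad r'\in\bigl(2+\tfrac{4s}{N},\,2_s^*\bigr),
\]
since $\|w_k\|_{L^2(\R^N)}$ is bounded (by the $H^s$-bound and Sobolev) and $\|w_k\|_{L^{r'}(\R^N)}\to 0$ by the previous step.

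For the second assertion, along a subsequence $w_k\rightharpoonup \overline w$ in $H^s(\R^N)$. By the compact embedding $H^s(B(y,t))\hookrightarrow L^q(B(y,t))$ (since $q<2_s^*$), the vanishing hypothesis forces $\int_{B(y,t)}|\overline w|^q\,{\rm d}x=0$ for every $y$, hence $\overline w\equiv 0$. Testing the approximate equation against $w_k$ itself yields
\[
\|w_k\|_{H^s(\R^N)}^{2} \;=\; \int_{\R^N} a(x)|w_k|^{p+1}\,{\rm d}x \;+\; \prescript{}{H^{-s}}{\langle}f,w_k{\rangle}_{H^s} \;+\; o(1).
\]
Since $p+1\in(2,\,2_s^*)$, the first assertion applied with $r=p+1$, together with $a\in L^\infty(\R^N)$, gives $\int_{\R^N} a(x)|w_k|^{p+1}\,{\rm d}x\to 0$; the weak convergence $w_k\rightharpoonup 0$ in $H^s$ gives $\prescript{}{H^{-s}}{\langle}f,w_k{\rangle}_{H^s}\to 0$. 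Therefore $\|w_k\|_{H^s(\R^N)}\to 0$.

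The principal technical obstacle is the local fractional Sobolev estimate used in the first step. Because the Gagliardo seminorm is nonlocal, localizing it to each $B_j$ via cutoff functions produces commutator-type error terms that must be absorbed using a fractional Leibniz-type bound. Once this local embedding is in hand, the Hölder-based summation with the vanishing factor extracted proceeds exactly as in the classical case $s=1$, and the rest of the argument is routine.
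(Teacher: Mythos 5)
Your proof is correct and follows essentially the same route as the paper's: a Lions-type vanishing argument via local interpolation between $L^2$ (resp.\ $L^q$) and $L^{2_s^*}$ plus a bounded-overlap covering of $\R^N$ for the first claim, and then pairing the residual \eqref{9-7-2} with $w_k$ and using $w_k\to 0$ in $L^{p+1}(\R^N)$ together with $w_k\rightharpoonup 0$ for the second. The only differences are organizational — you reduce to $q=2$, arrange the exponents so that the local $H^s$ factor is the one summed over the cover (for $r\geq 2+\tfrac{4s}{N}$) and interpolate down for the remaining $r$, which is if anything a more careful bookkeeping than the paper's single-interpolation display; also note that the local embedding $W^{s,2}(B)\hookrightarrow L^{2_s^*}(B)$ on a ball is a standard extension-domain fact, so the cutoff/commutator machinery you flag as the main obstacle can be avoided entirely (the paper simply bounds $\|w_k\|_{L^{2_s^*}(B)}$ by the global $\|w_k\|_{H^s(\R^N)}$).
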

\begin{proof}
Choose $\kappa\in(q, 2^*_s)$ arbitrarily. Therefore, using interpolation, we have
$$\|w_k\|_{L^\kappa({B(y,t)})}\leq \|w_k\|^{1-\la}_{L^q({B(y,t)})}\|w_k\|^{\la}_{L^{2^*_s}({B(y,t)})}\leq C\|w_k\|^{1-\la}_{L^q({B(y,t)})}\|w_k\|^{\la}_{H^s(\Rn)},$$
where $\frac{1}{\kappa}=\frac{1-\la}{q}+\frac{\la}{2^*_s}$. 
%
Now, covering $\Rn$ by balls of radius $t$, in such a way that each point of $\Rn$ is contained in at most $(N+1)$ balls, we find
$$\int_{\Rn}|w_k|^{\kappa}\, {\rm d}x\leq (N+1)C^\kappa\sup_{y\in\Rn}\bigg(\int_{B(y, t)} |w_k|^q {\rm d}x\bigg)^{(1-\la)\frac{\kappa}{q}}\|w_k\|_{H^s(\Rn)}^{\la\kappa}.$$ Therefore, the hypothesis of the lemmas implies $w_k\to 0$ in $L^\kappa(\Rn)$ for all $\kappa\in(q, 2^*_s)$. This completes the lemma if $q=2$, otherwise, if $q>2$, then again one can argue in similar way by choosing $\kappa\in (2, q)$.  
In addition, if \eqref{9-7-2} is satisfied, then we obtain
\be\lab{7-8-1} |\prescript{}{H^{-s}}{\langle} (-\De)^s w_k + w_k -a(x)w_k^{p-1}w_k-f, \, w_k{\rangle}_{H^s}|  = o(1)\|w_k\|_{H^s(\Rn)},\ee
where $\prescript{}{H^{-s}}{\langle}.,.{\rangle}_{H^s}$ denotes the duality bracket between $H^{-s}(\Rn)$ and $\Hs$.  Since $\{w_k\}$ is bounded in $H^s(\Rn)$, the RHS is $o(1)$. On the other hand, for the LHS we observe that since $w_k$ is bounded in $\Hs$ and $w_k\to 0$ in $L^r(\Rn)$, for $r\in (2,2^*)$, we must have $w_k\rightharpoonup 0$ in $\Hs$ and consequently, $\prescript{}{H^{-s}}{\langle}f, w_k{\rangle}_{H^s}=o(1)$. Also, by first part, $w_k\to 0$ in $L^{p+1}(\Rn)$. Hence, \eqref{7-8-1} yields $w_k\rightarrow 0$ in $\Hs$.
\end{proof}
\begin{lemma}\label{L2}
			Let ${\phi_k}$ weakly converges to $\phi$ in $\Hs$, then we have \\
			$$a|\phi_k|^{p-1}\phi_k - a|\phi|^{p-1}\phi \longrightarrow 0 \quad \text{in}\quad H^{-s}(\R^N).$$
		\end{lemma}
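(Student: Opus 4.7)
The plan is to combine the compact Sobolev embedding on bounded subsets of $\Rn$, the elementary pointwise inequality for the power nonlinearity, and the duality between $\Hs$ and $\hms$, so that the $\hms$-statement is proved by testing against an arbitrary $\psi\in\Hs$ with $\|\psi\|_{\Hs}\le 1$ and controlling the tail uniformly in $k$.

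First I would extract from $\phi_k\rightharpoonup\phi$ in $\Hs$ a (further) subsequence, still denoted $\{\phi_k\}$, for which Rellich--Kondrachov yields strong convergence in $L^q(B_R)$ for every $R>0$ and every $q\in[1,2_s^*)$, together with pointwise a.e.\ convergence in $\Rn$; boundedness in $\Hs$ together with the continuous embedding $\Hs\hookrightarrow L^{p+1}(\Rn)$ (since $p+1<2_s^*$) provides a uniform bound $\|\phi_k\|_{L^{p+1}(\Rn)}\le M$. I would then invoke the elementary inequality
\begin{equation*}
\bigl||u|^{p-1}u-|v|^{p-1}v\bigr|\le C_p\bigl(|u|^{p-1}+|v|^{p-1}\bigr)|u-v|,\qquad p>1,
\end{equation*}
combined with H\"older's inequality with exponents $\bigl(\tfrac{p+1}{p-1},p+1,p+1\bigr)$, and the dual Sobolev embedding $L^{(p+1)/p}(\Rn)\hookrightarrow\hms$ (transpose of $\Hs\hookrightarrow L^{p+1}(\Rn)$), to estimate for every such $\psi$ and every $R>0$,
\begin{align*}
&\Bigl|\int_{\Rn}\bigl(a|\phi_k|^{p-1}\phi_k-a|\phi|^{p-1}\phi\bigr)\psi\,{\rm d}x\Bigr|\\
&\qquad\le C\|a\|_\infty M^{p-1}\|\phi_k-\phi\|_{L^{p+1}(B_R)}\|\psi\|_{L^{p+1}(B_R)}+C\|a\|_\infty M^{p}\|\psi\|_{L^{p+1}(\Rn\setminus B_R)}.
\end{align*}
The first term vanishes as $k\to\infty$ by the local strong $L^{p+1}$ convergence established above, while the second is made arbitrarily small by choosing $R$ large, because $\psi\in L^{p+1}(\Rn)$ and its tail mass there is uniformly small.

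The main obstacle, and the reason one cannot work purely at the level of a norm estimate on the whole space, is that weak $\Hs$-convergence does not upgrade to strong $L^{p+1}(\Rn)$-convergence: translations of a fixed profile $w(\cdot-x_k)$ with $|x_k|\to\infty$ provide the familiar concentration-compactness obstruction. The argument must therefore be localised, and the saving feature that closes the estimate is exactly the tightness of each individual test function $\psi\in\Hs$ in $L^{p+1}(\Rn)$. This is precisely the amount of convergence that will be used in Proposition~\ref{PSP} to identify the weak limit $\bar u$ as a solution of \eqref{MAT1} and to peel off the translated profiles $w_i$ in the decomposition.
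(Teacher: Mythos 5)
There is a genuine gap in your tail estimate, and it is exactly the concentration phenomenon you yourself flag. The lemma asserts \emph{norm} convergence in $\hms$, i.e.
$$\sup_{\|\psi\|_{\Hs}\le 1}\Big|\int_{\Rn}\big(a|\phi_k|^{p-1}\phi_k-a|\phi|^{p-1}\phi\big)\psi\,{\rm d}x\Big|\longrightarrow 0,$$
and this is the form in which the paper uses it (for instance to conclude $\bar I_{a,0}'(u_k-u)\to 0$ in $\hms$ in Step~3 of Proposition \ref{PSP}). Your exterior term is bounded by $C\|a\|_\infty M^{p}\|\psi\|_{L^{p+1}(\Rn\setminus B_R)}$, and you make it small by choosing $R$ large \emph{for the given $\psi$}. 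But $R$ must be chosen independently of $\psi$: by translation invariance, $\sup_{\|\psi\|_{\Hs}\le 1}\|\psi\|_{L^{p+1}(\Rn\setminus B_R)}=S_1^{-1/2}$ for every $R$ (test with $w^*(\cdot-x_0)/\|w^*\|_{\Hs}$, $|x_0|$ large), so the unit ball of $\Hs$ is not tight in $L^{p+1}(\Rn)$ and your second term does not tend to zero uniformly in $\psi$. As written, the argument proves only that $\int F_k\psi\,{\rm d}x\to 0$ for each fixed $\psi$, i.e.\ weak-$*$ convergence of $F_k:=a|\phi_k|^{p-1}\phi_k-a|\phi|^{p-1}\phi$ in $\hms$, which is strictly weaker than the claim.

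The smallness must be placed on the tail of $F_k$ rather than on the tail of $\psi$, and your pointwise inequality $|F_k|\le C_p(|\phi_k|^{p-1}+|\phi|^{p-1})|\phi_k-\phi|$ cannot deliver this: on $\Rn\setminus B_R$ it only yields $\|F_k\|_{L^{(p+1)/p}(\Rn\setminus B_R)}\le CM^{p}$, with no decay in $R$ uniform in $k$. The paper instead uses, with $\psi_k:=\phi_k-\phi$, the Young-type bound
$$\Big|a|\phi+\psi_k|^{p-1}(\phi+\psi_k)-a|\phi|^{p-1}\phi\Big|^{\frac{p+1}{p}}\le \eps|\psi_k|^{p+1}+C_\eps|\phi|^{p+1},$$
which hides the non-tight part $|\psi_k|^{p+1}$ behind an arbitrarily small coefficient and attaches the tail decay to the single fixed function $|\phi|^{p+1}\in L^1(\Rn)$. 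Combined with Vitali's theorem on $B_R$, this gives $F_k\to 0$ in $L^{(p+1)/p}(\Rn)$, and the conclusion then follows from the embedding $L^{(p+1)/p}(\Rn)\hookrightarrow\hms$ dual to $\Hs\hookrightarrow L^{p+1}(\Rn)$, exactly as in the last line of your proposal. If you replace your exterior estimate by this one, the rest of your argument goes through.
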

		\begin{proof}
			Defining $\psi_k$ as  $\phi_k - \phi$, we see $\psi_k \rightharpoonup 0$ in $\Hs$. In particular, $\{\psi_k\}$ is bounded in $\Hs$. Thus, up to a subsequence, $\psi_k \to 0$ in $L_{loc}^{q}(\R^N) \ \mbox{for all } \ 1<q<2_{s}^*$ and $\psi_k \to 0$ a.e.. Consequently,
			$a|\phi +\psi_k|^{p-1}(\phi + \psi_k) - a|\phi|^{p-1}\phi \rightarrow 0$ a.e.. Therefore, using Vitaly's convergence theorem, it follows 
			$a|\phi +\psi_k|^{p-1}(\phi + \psi_k) - a|\phi|^{p-1}\phi   \rightarrow 0$ in $L^{\tfrac{p+1}{p}}_{loc}(\Rn)$. We also observe that
			for every $\varepsilon>0, \;\text{there exists }C_\varepsilon >0$ such that 
			\be\lab{10-7-2}\bigg|a|\phi + \psi_k|^{p-1}(\phi + \psi_k) - a|\phi|^{p-1}\phi \bigg|^{\tfrac{p+1}{p}} \leq \varepsilon |\psi_k|^{p+1} + C_{\varepsilon} |\phi|^{p+1} .\ee
Moreover, since $\psi_k \rightharpoonup 0$ in $\Hs$ implies $\psi_k$ is uniformly bounded in $L^{p+1}(\Rn)$ and the fact that $|\phi|^{p+1}\in L^1(\Rn)$, it is easy to see from \eqref{10-7-2} that given $\eps>0$, there exists $R>0$ such that 
\be\lab{10-7-3}
\int_{\Rn\setminus B(0,R)}\bigg|a|\phi + \psi_k|^{p-1}(\phi + \psi_k) - a|\phi|^{p-1}\phi \bigg|^{\tfrac{p+1}{p}}\, {\rm d}x<\eps.
\ee
As a result, $a|\phi +\psi_k|^{p-1}(\phi + \psi_k) - a|\phi|^{p-1}\phi \rightarrow 0$ in $L^{\tfrac{p+1}{p}}(\Rn)$. Since $H^s(\Rn)$ is continuously embedded in $L^{p+1}(\Rn)$, which is the dual space of $L^{\tfrac{p+1}{p}}(\Rn)$, it follows that $a|\phi +\psi_k|^{p-1}(\phi + \psi_k) - a|\phi|^{p-1}\phi \rightarrow 0$ in $H^{-s}(\Rn)$.
\end{proof}	

    \begin{lemma}\label{L3}
    	For each $c_0 \geq 0, \;\text{there exists }\delta >0$ such that if $v\in\Hs$ solves
    	\be\label{BLA5}
    	(-\De)^s v + v = |v|^{p-1} v \text{ in }\Rn,\; v\in \Hs,
    	\ee
    	 and $\|v\|_{\Hs}\leq c_0,\;\|v\|_{L^2(\R^N)}\leq \delta,$  then $v \equiv 0.$ 
    \end{lemma}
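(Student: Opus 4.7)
The plan is to extract a universal lower bound on $\|v\|_{H^s(\Rn)}$ for any nontrivial solution of \eqref{BLA5}, and then use the two size hypotheses together with an interpolation estimate to force $\|v\|_{H^s(\Rn)}$ to be so small that this lower bound is violated, unless $v\equiv 0$.

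First I would test \eqref{BLA5} against $v$ in the weak formulation, which produces the identity
\[
\|v\|_{H^s(\Rn)}^{2} \;=\; \int_{\Rn}|v|^{p+1}\, dx.
\]
Since $2<p+1<2^{*}_{s}$, the Sobolev embedding $H^s(\Rn)\hookrightarrow L^{p+1}(\Rn)$ yields $\int_{\Rn}|v|^{p+1}\,dx\leq C\|v\|_{H^s(\Rn)}^{p+1}$, so the above identity gives $\|v\|_{H^s(\Rn)}^{2}\leq C\|v\|_{H^s(\Rn)}^{p+1}$. Therefore either $v\equiv 0$, or $\|v\|_{H^s(\Rn)}\geq \kappa$ for a universal constant $\kappa=\kappa(N,s,p)>0$ independent of $c_0$ and $\delta$.

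Next I would interpolate between $L^{2}$ and $L^{2^{*}_{s}}$: there exists a unique $\theta\in(0,1)$ satisfying $\frac{1}{p+1}=\frac{1-\theta}{2}+\frac{\theta}{2^{*}_{s}}$ so that $\|v\|_{L^{p+1}}\leq \|v\|_{L^{2}}^{1-\theta}\|v\|_{L^{2^{*}_{s}}}^{\theta}$, and the continuous embedding $H^s(\Rn)\hookrightarrow L^{2^{*}_{s}}(\Rn)$ then gives
\[
\int_{\Rn}|v|^{p+1}\, dx \;\leq\; C\,\|v\|_{L^{2}(\Rn)}^{(1-\theta)(p+1)}\,\|v\|_{H^s(\Rn)}^{\theta(p+1)}.
\]
Substituting into the identity above and applying the two hypotheses $\|v\|_{L^2(\Rn)}\leq\delta$, $\|v\|_{H^s(\Rn)}\leq c_0$, I would obtain
\[
\|v\|_{H^s(\Rn)}^{2} \;\leq\; C\,\delta^{(1-\theta)(p+1)}\, c_0^{\,\theta(p+1)}.
\]

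To conclude, I would choose $\delta=\delta(c_0,N,s,p)>0$ small enough that the right-hand side is strictly less than $\kappa^{2}$. If $v\not\equiv 0$, this contradicts the lower bound from the first step, forcing $v\equiv 0$. The argument is essentially a two-norm comparison and I do not anticipate any serious obstacle; the only point to verify carefully is that the interpolation exponent $(1-\theta)(p+1)$ is strictly positive, which is automatic since $p+1<2^{*}_{s}$ implies $\theta<1$, so small $\delta$ genuinely forces the upper bound to collapse.
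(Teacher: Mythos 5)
Your proof is correct and uses the same core ingredients as the paper: testing the equation with $v$ to get $\|v\|_{H^s}^{2}=\int_{\Rn}|v|^{p+1}\,dx$, interpolating $L^{p+1}$ between $L^{2}$ and $L^{2^{*}_{s}}$, and invoking the Sobolev embedding. The only difference is organizational — by first extracting the universal lower bound $\kappa$ for nontrivial solutions you avoid the paper's case distinction between $\theta(p+1)\geq 2$ and $\theta(p+1)<2$, and in fact your version makes explicit the lower-bound step that the paper's second case leaves implicit.
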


	\begin{proof}
	Taking $v$ as a test function, it follows 
		\be\lab{10-7-1} \|v\|_{\Hs}^2=\int_{\Rn}|v|^{p+1}\, {\rm d}x \leq \|v\|_{L^2(\R^N)}^{\la(p+1)}\|v\|_{L^{2^*_s}(\Rn)}^{(1-\la)(p+1)}\leq C \delta^{\la(p+1)}\|v\|_{\Hs}^{(1-\la)(p+1)},\ee 
where $\la$ is such that $\frac{1}{p+1}=\frac{\la}{2}+\frac{1-\la}{2^*_s}$. If 
$(1-\la)(p+1)\geq 2$, i.e., $p\geq 1+\frac{4s}{N}$, then \eqref{10-7-1} implies $v\equiv 0$ as we can choose $\de$ small enough. Now if $p<1+\frac{4s}{N}$, then \eqref{10-7-1} yields
$\|v\|_{H^s(\Rn)}\leq C\de^\frac{\la(p+1)}{2-(1-\la)(p+1)}$. Therefore, choosing $\de>0$ small enough, we can conclude the lemma.
\end{proof}

\vspace{2mm}

{\bf Proof of Proposition \ref{PSP}:}		
	\begin{proof} We prove this proposition in the spirit of \cite{Bahri}. We divide the proof into few steps.

\medskip
	
\underline{\bf Step 1:} Using standard arguments it follows that any PS sequence for $\bar I_{a,f}$ is bounded in $H^s(\Rn)$. More precisely, 
\begin{align*}
\lim_{k\to\infty}\bar I_{a,f}(u_k)+o(1)+o(1)\|u_k\|_{H^s(\Rn)}\geq \bar I_{a,f}(u_k) \, - \, 
\frac{1}{p+1} (\bar I_{a,f})'(u_k)u_k & =\bigg(\frac{1}{2}-\frac{1}{p+1}\bigg)\|u_k\|_{H^s(\Rn)}^{2}\\
& - \left(1- \frac{1}{p+1} \right) \prescript{}{H^{-s}}{\langle}f, u_k{\rangle}_{H^s}.
\end{align*}
Hence boundedness follows. Consequently, up to a subsequence $u_k\rightharpoonup u$ in $H^s(\Rn)$.  Moreover, as  $(\bar I_{a,f})'(u_k)v\rightarrow 0 \;\text{as}\;k\rightarrow\infty\quad\forall\,  v\in\Hs$, we have
		\be\label{B6}
		(-\De)^su_k + u_k - a(x)|u_k|^{p-1}u_k -f= \varepsilon_k\overset{k}{\rightarrow} 0\quad \text{in}\quad H^{-s}(\R^N).
		\ee

	\medskip		
				
		\underline{\bf Step 2:} 
		From \eqref{B6} we get by letting $k \rightarrow 0,$
		\be\lab{8-1-1}\iint_{\R^{2N}} \frac{(u_k(x) - u_k(y))((v(x) - v(y))}{|x-y|^{N+2s}} \, {\rm d}x\, {\rm d}y \, + \, \int_{\R^N}u_kv\,  {\rm d}x \, - \, \int_{\R^N}a(x)\, |u_k|^{p-1} u_k v \, 
		{\rm d} x \, - \, \prescript{}{H^{-s}}{\langle}f, v{\rangle}_{H^s} \, {\rightarrow} \,  0,
		\ee
	for all $v\in\Hs$.	

\vspace{2mm}

{\bf Claim 1:} Weak limit $u$ satisfies 
 $$(-\De)^su + u = a(x) \, |u|^{p-1}u\, + \, f\text{ in }\Rn,\quad u\in \Hs. $$

	Indeed,  $u_k\rightharpoonup u$ in $H^s(\Rn)$ implies, 
	
	\begin{align*}
	&\int_{\R^{2N}} \frac{(u_k(x) - u_k(y))((v(x) - v(y))}{|x-y|^{N+2s}} \, {\rm d}x\, {\rm d}y \, + \, \int_{\R^N}u_kv\,  {\rm d}x \\ \,
	& \longrightarrow \, \int_{\R^{2N}} \frac{(u(x) - u(y))((v(x) - v(y))}{|x-y|^{N+2s}} \, {\rm d}x\, {\rm d}y \, + \, \int_{\R^N}u  v\,  {\rm d}x.
	\end{align*}
	
	Further using Lemma~\ref{L2} we conclude 
	
 $$\int_{\R^N}a(x)\, |u_k|^{p-1} u_k v \,{\rm d} x \,   \longrightarrow \, \int_{\R^N}a(x)\, |u|^{p-1} u v \, {\rm d} x. $$
In view of above the claim follows. 

\vspace{2mm}

\underline{\bf Step 3:} In this step we show that $u_k-u$ is a PS sequence for $\bar I_{a,0}$ at the level \\
$\lim_{k\to\infty} \bar I_{a,f}(u_k)-\bar I_{a,f}(u)$ and $u_k-u\rightharpoonup 0$ in $\Hs$. 

\medskip 

To see this, first we observe that using Brezis-Lieb lemma, we have
 \bea\lab{17-8-1}
 &&\iint_{\R^{2N}}\dfrac{|u_k(x) -  u_k(y)|^2}{|x-y|^{N+2s}} \, {\rm d}x \, {\rm d}y -\iint_{\R^{2N}}\dfrac{|{u}(x) - {u}(y)|^2}{|x-y|^{N+2s}} \, {\rm d}x \, {\rm d}y\no\\
 &&\qquad\qquad =
\iint_{\R^{2N}}\dfrac{|(u_k-u)(x)-(u_k-u)(y)|^2}{|x-y|^{N+2s}} \, {\rm d}x \, {\rm d}y +o(1).
\eea
\bea\lab{17-8-2}
 \int_{\R^N} | u_k|^2 \, {\rm d}x -\int_{\R^N}|{u}|^2 \, {\rm d}x =\int_{\R^N}|u_k-u|^2 \, {\rm d}x+o(1).
\eea
\bea\lab{17-8-3}
 \int_{\R^N} a(x)| u_k|^{p+1} \, {\rm d}x -\int_{\R^N}a(x)|u|^{p+1} \, {\rm d}x =\int_{\R^N}a(x)|u_k-u|^{p+1} \, {\rm d}x+o(1).
\eea
Further as $u_k\rightharpoonup u$ and $f\in H^{-s}(\Rn)$, we also have 
\be\lab{17-8-4}
\prescript{}{H^{-s}}{\langle}f,u_k{\rangle}_{H^s}\To \prescript{}{H^{-s}}{\langle}f,u{\rangle}_{H^s}.
\ee
Using above, it follows that
\Bea
\bar I_{a,0}(u_k-u)&=&\frac{1}{2}(\|u_k\|^2_{\Hs}-\|u\|^2_{\Hs})-\frac{1}{p+1}\bigg(\int_{\R^N} a(x)| u_k|^{p+1}-\int_{\R^N}a(x)|u|^{p+1}\bigg) + \circ(1)\\
&\To& \lim_{k\to\infty} \bar I_{a,f}(u_k) + \prescript{}{H^{-s}}{\langle}f,u{\rangle}_{H^s}-\bar I_{a,0}(u), \quad \mbox{as} \ k \rightarrow \infty, \\
&=&\lim_{k\to\infty} \bar I_{a,f}(u_k)-\bar I_{a,f}(u).
\Eea
 Next, note that \eqref{B6} and Claim 1 implies 
$$(-\De)^s(u_k-u)+(u_k-u)-a(x)(|u_k|^{p-1}u_k- |u|^{p-1}u)=\eps_k\to 0 \quad\text{in}\quad H^{-s}(\Rn).$$
Combining this with Lemma \ref{L2}, we conclude $I_{a,0}'(u_k-u)\to 0$ in $H^{-s}(\Rn)$.
Hence Step~3 follows.

\vspace{2mm}

\underline{\bf Step 4:} Using Lemma \ref{L1} we have, either $u_k-u \rightarrow 0$ in $H^s(\Rn),$ in that case the proof is over or  there
exists  $\alpha>0,$ such that up to a subsequence 

$$
Q_k(1) : = {\rm sup}_{y\in\R^N}\int_{B(y, 1)} |u_k-u|^2 \, {\rm d}x > \alpha > 0.
$$
Therefore we can find a sequence $\{ y_k \} \subset \Rn$ such that 

\begin{equation}\label{concentration-f}
\int_{B(y_k, 1)} |u_k-u|^2 \, {\rm d}x \geq  \alpha.
\end{equation}

Let us define $ {\tilde u_k}(x):=(u_k-u)(y_k + x)$, then using translation invariance of $H^s(\Rn),$ it implies 
 ${\tilde u_k}$ is also bounded in $H^s(\Rn)$ and hence converges weakly in $H^s(\Rn)$ to 
 $\tilde{u}.$ Now we claim that $\tilde{u} \neq 0.$ Indeed Rellich compactness theorem yields  
 $H^s(B(y_k, 1))\hookrightarrow L^2(B(y_k, 1))$ compactly embedded 
 and therefore \eqref{concentration-f} concludes the claim. 
 
 \medskip 
 
 Also it follows from the fact $u_k-u \rightharpoonup 0$ in $H^s(\Rn)$ and \eqref{concentration-f} that 
 
 $$
 |y_k| \longrightarrow \infty \quad \mbox{as} \ k \rightarrow \infty.
 $$
Now define, $$v_k := \tilde{u}_k \, -  \, \tilde{u}.$$
Note that, $\tilde u_k\rightharpoonup \tilde u$ implies $v_k\rightharpoonup 0$ in $\Hs.$ Using this and Lemma \ref{L2}, in the definition of $\bar{I}'_{1,0}(v_k) $ yields
		 \be\label{S0.8}
		 \bar{I}'_{1,0}(v_k)=o(1)\text{ in } \hms,
		 \ee		 
i.e., $$
(-\Delta)^s v_k + v_k = |v_k|^{p-1}v_k \longrightarrow 0 \quad \mbox{in} \ H^{-s}(\Rn).
$$

%
%
%
%
%
%
%
%
%

\underline{\bf Step 5:} In this step we show that  
\be\label{e14}
(-\De)^s\tilde{u} +\tilde{u} = |\tilde{u}|^{p-1}\tilde u \quad \mbox{in} \  \Rn, \quad \tilde u \in H^s(\Rn) .
\ee

To prove this step, it is enough to show that for arbitrarily chosen $v\in C^\infty_0(\Rn)$, the following holds:
\be\lab{e14-1}
\<\tilde u, v\>_{\Hs}=\int_{\Rn}|\tilde u|^{p-1}\tilde u v\, {\rm d}x.
\ee
To show the above, let $v\in C^\infty_0(\Rn)$ be arbitrarily chosen. Since,
$\tilde u_k\rightharpoonup \tilde u$, using Step 3, we estimate the inner product between $\tilde u$ and $v$ as follows:
\bea\lab{18-8-1}
\<\tilde u, v\>_{\Hs}&=& \lim_{k\to\infty}\<\tilde u_k, v\>_{\Hs} \no\\
&=&\lim_{k\to\infty}\bigg[\iint_{\R^{2N}}\frac{\big((u_k-u)(x+y_k)-(u_k-u)(y+y_k)\big)\big(v(x)-v(y)\big)}{|x-y|^{N+2s}} \, {\rm d}x \, {\rm d}y\no\\
&&\qquad+\int_{\Rn}(u_k-u)(x+y_k)v(x) \, {\rm d}x\bigg]\no\\
&=&\lim_{k\to\infty}\bigg[\iint_{\R^{2N}}\frac{\big((u_k-u)(x)-(u_k-u)(y)\big)\big(v(x-y_k)-v(y-y_k)\big)}{|x-y|^{N+2s}}\, {\rm d}x \, {\rm d}y \no\\
&&\qquad+\int_{\Rn}(u_k-u)(x)v(x-y_k) \, {\rm d}x \bigg]\no\\
&=&\lim_{k\to\infty} \int_{\R^N}a(x)|(u_k-u)(x)|^{p-1}(u_k-u)(x) v(x-y_k) \, 
		{\rm d} x \no\\ 
&=&\lim_{k\to\infty}\int_{\R^N}a(x+y_k)|\tilde u_k(x)|^{p-1}\tilde u_k(x) v(x) \, 
		{\rm d} x .
\eea

{\bf Claim 2}: $\lim_{k\to\infty}\displaystyle\int_{\R^N}a(x+y_k)|\tilde u_k(x)|^{p-1}\tilde u_k(x) v(x) \, 
		{\rm d} x=\int_{\Rn}|\tilde u|^{p-1}\tilde u v\, {\rm d}x $.

\vspace{2mm}
To prove the claim, we estimate
\Bea
&&\bigg|\int_{\R^N}a(x+y_k)|\tilde u_k(x)|^{p-1}\tilde u_k(x) v(x) \, 
		{\rm d} x-\int_{\Rn}|\tilde u|^{p-1}\tilde u v\, {\rm d}x \bigg|\\
		&\leq& \bigg|\int_{\R^N}a(x+y_k)(|\tilde u_k|^{p-1}\tilde u_k-|\tilde u|^{p-1}\tilde u)v {\rm d}x\bigg|
		+ \bigg|\int_{\R^N}\big(a(x+y_k)-1\big)|\tilde u|^{p-1}\tilde u v\, {\rm d}x\bigg|\\
&=& I^1_k+J^1_k.				
\Eea
Since $|y_k|\to\infty$, $|\tilde u|^{p-1}\tilde u v\in L^1(\Rn)$, $a\in L^\infty(\Rn)$ and $a(x)\to 1$ as $|x|\to\infty$, using dominated convergence theorem, it follows that 
\be\lab{8-8-2}
\lim_{k\to\infty}J^1_k=0.
\ee
On the other hand, since $v$ has compact support, using Vitaly's convergence theorem 
\be
\lim_{k\to\infty}I^1_k\leq\lim_{k\to\infty}\|a\|_{L^\infty(\Rn)}\int_{\text{supp}\,v}\big||\tilde u_k|^{p-1}\tilde u_k-|\tilde u|^{p-1}\tilde u\big||v| {\rm d}x=0.\no
\ee
Combining the above two estimates, Claim 2 holds. 
Using Claim 2, we conclude Step 5 from \eqref{18-8-1}.

\medskip

Further, by Brezis-Lieb Lemma  
\bea
 \iint_{\R^{2N}}\dfrac{|\tilde u_k(x) - \tilde u_k(y)|^2}{|x-y|^{N+2s}} \, {\rm d}x \, {\rm d}y -\iint_{\R^{2N}}\dfrac{|\tilde{u}(x) - \tilde{u}(y)|^2}{|x-y|^{N+2s}} \, {\rm d}x \, {\rm d}y 
-\iint_{\R^{2N}}\dfrac{|v_k(x)-v_k(y)|^2}{|x-y|^{N+2s}} \, {\rm d}x \, {\rm d}y  \rightarrow 0 ; \no
\eea
\bea
 \int_{\R^N} |\tilde u_k|^2 \, {\rm d}x -\int_{\R^N}|\tilde{u}|^2 \, {\rm d}x -\int_{\R^N}|v_k|^2 \, {\rm d}x \rightarrow  0\no.
\eea
as $k \rightarrow \infty$.

\medskip

In view of the above steps, if ${\tilde u_k} - {\tilde u}$ does not converge to zero in $H^{s}(\Rn),$ we can 
repeat the procedure for the Palais-Smale (PS) sequence ${\tilde u_k} - \tilde{u}$ to land in either of the two cases. If it converges to zero then we stop or else we repeat the process. But this process has to stop in finitely many steps and we obtain 
 $\tilde{u}_1,\;\tilde{u}_2,\;\ldots,\;\tilde{u}_n$ 
denotes the limit solution of \eqref{e14} obtained through the procedure, we have
	   $$\sum_{i=1}^{n}\int_{\R^N}|\tilde{u}_i|^2 {\rm d}x \leq \liminf_{k\to\infty}\int_{\R^N}|u_k-u|^2 \, {\rm d}x.$$
	   Thus $n$ can not go to infinity in view of Lemma \ref{L3}.

\end{proof}

We end this section with the definition of some functions which will be used throughout the rest of the paper. We define, 

\be\lab{17-7-4}J(u) := \frac{\|u\|_{\Hs}^2}{\Big(\displaystyle\int_{\R^N}a(x) |u(x)|^{p+1}{\rm d}x\Big)^{\tfrac{2}{p+1}}}, \quad
	J_{\infty}(u) := \frac{\|u\|_{\Hs}^2}{\Big(\displaystyle\int_{\R^N} |u(x)|^{p+1}{\rm d}x\Big)^{\tfrac{2}{p+1}}}.\ee
\be\lab{17-7-5}S_1 := \inf_{u\in\Hs\setminus \{0\}} J_{\infty}(u). 
	 \ee

	 From \cite{Frank}, it is known that $S_1$ is achieved by unique ground state solution $w^*$ of \eqref{AT0.8}. Further $w^*$ is
	  radially symmetric positive decreasing smooth function satisfying \eqref{9-7-1}.

\section{Proof of Theorem \ref{th:ex-f}}

In this section we prove Theorem \ref{th:ex-f}. To this aim we first establish existence of two positive critical points  in the spirit of \cite{Jeanjean} for the following functional:
	\be\label{EF-1}
			I_{a,f}(u)=\frac{1}{2}\|u\|^{2}_{H^s(\R^N)}-\frac{1}{p+1}\int_{\R^N}a(x)u_+^{p+1}\,{\rm d}x - \prescript{}{H^{-s}}{\langle}f,u{\rangle}_{H^s},
		\ee
		where $u_+(x):=\max\{u(x), 0\}$ and $u_-(x):=-\min\{u(x), 0\}$ and $f\in H^{-s}(\Rn)$ is a nonnegative functional.
		
	Clearly, if $u$ is a critical points of $I_{a,f}$, then $u$ solves 
\begin{equation}\label{MAT2}
\left\{\begin{aligned}
		(-\Delta)^s u + u &= a(x) u_+^{p}+f(x)\;\;\text{in}\;\mathbb{R}^{N},\\
		u &\in H^{s}{(\mathbb{R}^{N})}.
		 \end{aligned}
  \right.
\end{equation}	
	
\begin{remark}\lab{r:30-7-3}
If $u$ is a weak solution of \eqref{MAT2} and $f$ is a nonnegative functional, then taking $v=u_-$ as a test function in \eqref{MAT2}, we obtain 

$$-\|u_-\|^2_{\Hs} \, -\, \iint_{\R^{2N}} \frac{[u_+(y)u_-(x)+u_+(x)u_-(y)]}{|x - y|^{N + 2s}} \, {\rm d}x \, {\rm d}y
=\prescript{}{H^{-s}}{\langle}f,u_-{\rangle}_{H^s}\geq 0.$$ This in turn implies $u_-=0$, i.e., $u\geq 0$. Therefore, using maximum principle \cite[Theorem 1.2]{DPQ}, it follows that, $u$ is a positive solution to \eqref{MAT2}. Hence $u$ is a solution to \eqref{MAT1}. 
\end{remark}

To establish the existence of two critical points for $I_{a,f}$, we first need to prove some auxiliary results. Towards that, we partition $\Hs$ into three disjoint sets.
Let, $g:\Hs\to\R$ be defined by $$g(u):=\|u\|_{\Hs}^2-p ||a||_{L^\infty(\Rn)}\|u\|^{p+1}_{L^{p+1}(\Rn)}.$$
Now, we define
$$U_1:=\{u\in\Hs: u=0 \quad\text{or}\quad g(u)>0\}, \quad U_2:=\{u\in\Hs: g(u)<0\},$$
$$U:=\{u\in\Hs\setminus\{0\}: g(u)=0\}.$$

\begin{remark}\lab{r:30-7-1}
Since $p>1$, using Sobolev inequality, it is easy to see that $\|u\|_{\Hs}$ and $\|u\|_{L^{p+1}(\Rn)}$ are bounded away from $0$, for all $u\in U$.
\end{remark}

We define,
\be\lab{30-7-1}
c_0:=\inf_{U_1} {I_{a,f}}(u) \quad\text{and}\quad c_1:=\inf_{U} {I_{a,f}}(u).
\ee

\begin{remark}\lab{r:30-7-2}
 For any $t>0$, $g(tu)=t^2\|u\|_{\Hs}^2-t^{p+1}p|a|_{L^\infty(\Rn)}\|u\|^{p+1}_{L^{p+1}(\Rn)}$.  Moreover $g(0)=0$ and $t\mapsto g(tu)$ is a strictly concave function, we have for any $u\in \Hs$ with $\|u\|_{\Hs}=1$, there exists unique $t=t(u)$ such that $tu\in U.$ On the other hand, for any $u\in U$, it holds $g(tu)=(t^2-t^{p+1})\|u\|_{\Hs}^2$. This implies 
 $$tu\in U_1 \quad\text{for all}\quad t\in (0,1) \quad\text{and}\quad tu\in U_2 \quad\text{for all}\quad  t > 1.$$
\end{remark}

\begin{lemma}\lab{l:1-8-1}
Assume $C_p$ is defined as in Theorem \ref{th:ex-f}. Then there holds,
$$\frac{p-1}{p}\|u\|_{\Hs} \geq C_pS_1^{\tfrac{p+1}{2(p-1)}} \quad\forall\quad u\in U,$$
where $S_1$ is as defined in \eqref{17-7-5}.
\end{lemma}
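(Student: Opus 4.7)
The plan is to combine the two constraints satisfied by elements of $U$: the algebraic identity coming from $g(u)=0$, and the Sobolev-type lower bound coming from the definition of $S_1$.

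First, I would unpack the condition $u \in U$: since $g(u)=0$, we have the identity
\[
\|u\|_{H^s(\Rn)}^{2} \;=\; p\|a\|_{L^\infty(\Rn)}\,\|u\|_{L^{p+1}(\Rn)}^{p+1}.
\]
Next, I would recall the variational characterization of $S_1$ in \eqref{17-7-5}, which gives for every nonzero $u \in H^s(\Rn)$
\[
\|u\|_{L^{p+1}(\Rn)}^{2} \;\leq\; \frac{\|u\|_{H^s(\Rn)}^{2}}{S_1},
\]
and hence, raising to the power $(p+1)/2$,
\[
\|u\|_{L^{p+1}(\Rn)}^{\,p+1} \;\leq\; \frac{\|u\|_{H^s(\Rn)}^{\,p+1}}{S_1^{(p+1)/2}}.
\]

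Inserting this upper bound into the identity from $g(u)=0$ and dividing both sides by $\|u\|_{H^s(\Rn)}^{2}$ (which is strictly positive by Remark \ref{r:30-7-1}) yields
\[
1 \;\leq\; \frac{p\|a\|_{L^\infty(\Rn)}}{S_1^{(p+1)/2}}\,\|u\|_{H^s(\Rn)}^{\,p-1},
\]
so that
\[
\|u\|_{H^s(\Rn)} \;\geq\; \bigl(p\|a\|_{L^\infty(\Rn)}\bigr)^{-\tfrac{1}{p-1}}\,S_1^{\tfrac{p+1}{2(p-1)}}.
\]
Multiplying through by $(p-1)/p$ gives exactly $\frac{p-1}{p}\|u\|_{H^s(\Rn)} \geq C_p\,S_1^{(p+1)/(2(p-1))}$, which is the desired inequality.

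There is no real obstacle here; the argument is a two-line manipulation once the correct inequality is fed into the Nehari-type identity. The only mild care needed is to be sure that the definition of $S_1$ is applied with $J_\infty$ (i.e.\ with weight $1$) rather than $J$, which is harmless because $u \in U$ was defined precisely via the $\|a\|_{L^\infty(\Rn)}$ normalization, making these two viewpoints compatible.
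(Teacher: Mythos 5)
Your proposal is correct and uses exactly the same two ingredients as the paper's proof — the Nehari-type identity $\|u\|_{\Hs}^2 = p\|a\|_{L^\infty(\Rn)}\|u\|_{L^{p+1}(\Rn)}^{p+1}$ from $g(u)=0$ together with the unweighted Sobolev bound $\|u\|_{\Hs}\geq S_1^{1/2}\|u\|_{L^{p+1}(\Rn)}$ — combined in an algebraically equivalent order. No gaps; this matches the paper's argument.
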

\begin{proof}
$u\in U$ implies, $\|u\|_{L^{p+1}(\Rn)}=\frac{\|u\|_{\Hs}^\frac{2}{p+1}}{(p||a||_{L^\infty(\Rn)})^\frac{1}{p+1}}$. Therefore, combining this with the definition of $S_1$, we have 
$$\|u\|_{\Hs}\geq S_1^\frac{1}{2}\|u\|_{L^{p+1}(\Rn)} = S_1^\frac{1}{2}\frac{\|u\|_{\Hs}^\frac{2}{p+1}}{(p||a||_{L^\infty(\Rn)})^\frac{1}{p+1}} \quad\forall\, u\in U.$$ Therefore,
 for all $u\in U$, we have
 $$\|u\|_{\Hs}\geq\frac{S_1^\frac{p+1}{2(p-1)}}{(p||a||_{L^\infty(\Rn)})^\frac{1}{p-1}}=\frac{p}{p-1}C_pS_1^\frac{p+1}{2(p-1)}.$$ Hence the lemma follows. 
\end{proof}

\begin{lemma}\lab{l:30-7-1}
Assume $C_p$ is defined as in Theorem \ref{th:ex-f} and  
\be\lab{J1.3}
\inf_{u\in\Hs,\, \|u\|_{L^{p+1}(\Rn)=1}}\bigg\{C_p\|u\|_{\Hs}^\frac{2p}{p-1}-\prescript{}{H^{-s}}{\langle}f,u{\rangle}_{H^s}\bigg\}>0.
\ee 
Then $c_0<c_1$, where $c_0$ and $c_1$ are defined as in \eqref{30-7-1}.
\end{lemma}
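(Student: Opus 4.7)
The plan is to compare $I_{a,f}$ on $U$ with $I_{a,f}$ on $U_1$ by showing that, for each $w\in U$, the point $(1-\delta(w))w$ with a suitable $\delta(w)\in(0,1]$ lies in $U_1$ and has strictly smaller energy, and that this gain is bounded below uniformly along a minimizing sequence for $c_1$. The key preliminary step is to recast hypothesis \eqref{J1.3} as an inequality directly on the set $U$.

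First I would re-parametrize $U$: for $w\in U$, write $w = t^{\ast} v$ with $v := w/\|w\|_{L^{p+1}(\Rn)}$ and $t^{\ast} := \|w\|_{L^{p+1}(\Rn)}$. The defining relation of $U$ forces $\|w\|_{\Hs}^{2} = p\|a\|_{L^\infty(\Rn)}(t^{\ast})^{p+1}$ and $\|v\|_{\Hs}^{2} = p\|a\|_{L^\infty(\Rn)}(t^{\ast})^{p-1}$. Using the explicit form of $C_p$ from Theorem \ref{th:ex-f}, a short algebraic computation yields the identity
\begin{equation*}
C_p \|v\|_{\Hs}^{2p/(p-1)} \;=\; \tfrac{p-1}{p}\,\|w\|_{\Hs}^{2}/t^{\ast},
\end{equation*}
so multiplying \eqref{J1.3} by $t^{\ast}$ (and letting $\eta > 0$ denote the positive infimum there) produces
\begin{equation*}
\tfrac{p-1}{p}\|w\|_{\Hs}^{2} - \prescript{}{H^{-s}}{\langle}f,w{\rangle}_{H^s} \;\geq\; \eta\|w\|_{L^{p+1}(\Rn)} \qquad \forall\,w \in U. \tag{$\ast$}
\end{equation*}

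Next I would analyze the one-variable fiber $\phi(t) := I_{a,f}(tw)$, $t \in [0,1]$, for fixed $w \in U$. Setting $A(w) := \int_{\Rn} a\, w_+^{p+1}\,dx$, the bound $A(w) \leq \|a\|_{L^\infty(\Rn)}\|w\|_{L^{p+1}(\Rn)}^{p+1} = \|w\|_{\Hs}^{2}/p$ forces $\phi''(t) = \|w\|_{\Hs}^{2} - p t^{p-1} A(w) \geq \|w\|_{\Hs}^{2}(1 - t^{p-1}) \geq 0$ on $[0,1]$, so $\phi$ is convex. Combining the same bound on $A(w)$ with $(\ast)$ gives $\phi'(1) = \|w\|_{\Hs}^{2} - A(w) - \prescript{}{H^{-s}}{\langle}f,w{\rangle}_{H^s} \geq \eta\|w\|_{L^{p+1}(\Rn)} > 0$. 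Taylor's theorem with the upper bound $\phi''(s) \leq \|w\|_{\Hs}^{2}$ gives $\phi(1-\delta) \leq \phi(1) - \delta \phi'(1) + \tfrac{\delta^{2}}{2}\|w\|_{\Hs}^{2}$, and optimizing in $\delta \in (0, 1]$ produces a $\delta(w) \in (0,1]$ with
\begin{equation*}
I_{a,f}\bigl((1-\delta(w))w\bigr) \;\leq\; I_{a,f}(w) \;-\; \tfrac{1}{2}\min\!\left\{\tfrac{\phi'(1)^{2}}{\|w\|_{\Hs}^{2}},\; \phi'(1)\right\},
\end{equation*}
and by Remark \ref{r:30-7-2}, $(1-\delta(w))w \in U_1$.

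Finally I would promote this pointwise gap to a uniform one along a minimizing sequence $\{w_n\}\subset U$ of $c_1$. The coercivity estimate $I_{a,f}(w) \geq \tfrac{(p-1)(p+2)}{2p(p+1)}\|w\|_{\Hs}^{2} - \fhs\|w\|_{\Hs}$ for $w\in U$ (which again uses only $A(w) \leq \|w\|_{\Hs}^{2}/p$) bounds $\|w_n\|_{\Hs}$ uniformly from above, and hence also $\|w_n\|_{L^{p+1}(\Rn)}$ via $\|w_n\|_{L^{p+1}(\Rn)}^{p+1} = \|w_n\|_{\Hs}^{2}/(p\|a\|_{L^\infty(\Rn)})$; Lemma \ref{l:1-8-1} and Remark \ref{r:30-7-1} bound these quantities away from $0$. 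Consequently $\phi_n'(1) \geq \eta\|w_n\|_{L^{p+1}(\Rn)} \geq c > 0$, while $\|w_n\|_{\Hs}$ is bounded above, so the gap in the last display is uniformly bounded below by some $\delta_0 > 0$ independent of $n$. Passing to the limit $n\to\infty$ gives $c_0 \leq c_1 - \delta_0 < c_1$. The main obstacle I expect is the algebraic identity in the first step: verifying that the constant $C_p$ appearing in \eqref{J1.3} is calibrated precisely to cancel the coefficient arising from the Nehari relation on $U$ is the only place where the exact value of $C_p$ is used; once $(\ast)$ is available, the remainder is convexity of the fiber $\phi$ plus elementary coercivity.
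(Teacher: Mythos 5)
Your proof is correct and follows essentially the same strategy as the paper's: both translate hypothesis \eqref{J1.3}, via the Nehari-type relation defining $U$, into the uniform lower bound $\tfrac{p-1}{p}\|w\|_{\Hs}^2 - \prescript{}{H^{-s}}{\langle}f,w{\rangle}_{H^s} \geq \eta\|w\|_{L^{p+1}(\Rn)}$ on $U$ (i.e.\ a uniformly positive $t$-derivative of the fiber energy at $t=1$), and then step inward along the ray into $U_1$ with a uniformly quantified energy drop along a minimizing sequence for $c_1$. Your implementation is slightly more streamlined --- you apply Taylor's theorem directly to $\phi(t)=I_{a,f}(tw)$ using the two-sided bound $0\leq\phi''\leq\|w\|_{\Hs}^2$ on $[0,1]$, whereas the paper routes the same mechanism through the auxiliary functional $\tilde J$, the claim $c_0<0$, and the existence of an interior critical point $t_n\in(0,1)$ of $t\mapsto\tilde J(tu_n)$ --- but the key calibration of $C_p$ against the constraint $\|w\|_{\Hs}^2=p\|a\|_{L^\infty(\Rn)}\|w\|_{L^{p+1}(\Rn)}^{p+1}$ and the use of Remark \ref{r:30-7-1} to make the gain uniform are identical.
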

\begin{proof}
Define, \be\lab{31-7-1}\tilde J(u):=\frac{1}{2}\|u\|_{\Hs}^2-\frac{||a||_{L^\infty(\Rn)}}{p+1}\|u\|^{p+1}_{L^{p+1}(\Rn)}-\prescript{}{H^{-s}}{\langle}f,u{\rangle}_{H^s}, \quad u\in\Hs.\ee

{\bf Step 1}: In this step we prove that there exists $\alpha>0$ such that
$$\frac{d}{dt}\tilde J(tu)|_{t=1}\geq \al \quad\forall\, u\in U.$$ 

From the definition of $\tilde J$, we have $\frac{d}{dt}\tilde J(tu)|_{t=1}=\|u\|_{\Hs}^2-|a|_{L^\infty(\Rn)}\|u\|_{L^{p+1}(\Rn)}^{p+1}-\prescript{}{H^{-s}}{\langle}f,u{\rangle}_{H^s}$. Therefore, using the definition of $U$ and the value of $C_p$,  we have for  $u\in U$
\bea\lab{30-7-4}
\frac{d}{dt}\tilde J(tu)|_{t=1}=\frac{p-1}{p}\|u\|_{\Hs}^2-\prescript{}{H^{-s}}{\langle}f,u{\rangle}_{H^s}&=& (p|a|_{L^\infty(\Rn)})^\frac{1}{p-1}C_p\|u\|_{\Hs}^2-\prescript{}{H^{-s}}{\langle}f,u{\rangle}_{H^s}\no\\
&=&\bigg(\frac{\|u\|^2_{\Hs}}{||u||_{L^{p+1}(\Rn)}^{p+1}}\bigg)^\frac{1}{p-1}C_p\|u\|_{\Hs}^2-\prescript{}{H^{-s}}{\langle}f,u{\rangle}_{H^s}\no\\
&=&C_p\frac{\|u\|^\frac{2p}{p-1}_{\Hs}}{||u||_{L^{p+1}(\Rn)}^\frac{p+1}{p-1}}-\prescript{}{H^{-s}}{\langle}f,u{\rangle}_{H^s}.
\eea
Further, $\eqref{J1.3}$ implies there  exists $d>0$ such that

		\be\lab{30-7-3}
		\inf_{u\in\Hs,\, \|u\|_{L^{p+1}(\Rn)}=1}\Big\{C_p\|u\|_{\Hs}^{\tfrac{2p}{p-1}}-\prescript{}{H^{-s}}{\langle}f,u{\rangle}_{H^s}\Big\}\geq d.\ee
		Now, 
		\Bea
		\eqref{30-7-3}&\Longleftrightarrow& C_p\frac{\|u\|_{\Hs}^{\tfrac{2p}{(p-1)}}}{\|u\|_{L^{p+1}(\Rn)}^{\tfrac{p+1}{p-1}}}-\prescript{}{H^{-s}}{\langle}f,u{\rangle}_{H^s}\geq d, \quad \|u\|_{L^{p+1}(\Rn)}=1\\
		&\Longleftrightarrow& C_p\frac{\|u\|_{\Hs}^{\tfrac{2p}{(p-1)}}}{\|u\|_{L^{p+1}(\Rn)}^{\tfrac{p+1}{p-1}}}-\prescript{}{H^{-s}}{\langle}f,u{\rangle}_{H^s} \geq d\|u\|_{L^{p+1}(\Rn)}, \quad u\in\Hs\setminus\{0\}.
\Eea
		Hence, plugging back the above estimate into \eqref{30-7-4} and using Remark \eqref{r:30-7-1} we complete the proof of Step 1. 
		
		\vspace{2mm}
		
{\bf Step 2:} Let $u_n$ be a minimizing sequence for $I_{a,f}$ on $U$, i.e., $I_{a,f}(u_n)\to c_1$ and $\|u_n\|_{\Hs}^2=p\|a\|_{L^\infty(\Rn)}\|u_n\|_{L^{p+1}(\Rn)}^{p+1}$. Therefore, for large $n$
$$c_1+o(1)\geq I_{a,f}(u_n)\geq \tilde J(u_n)\geq \bigg(\frac{1}{2}-\frac{1}{p(p+1)}\bigg)\|u_n\|^2_{\Hs}- \|f\|_{H^{-s}(\Rn)}\|u_n\|_{\Hs}.$$
This implies that $\{\tilde J(u_n)\}$ is a bounded sequence and $\|u_n\|_{\Hs}$ and 
$\|u_n\|_{L^{p+1}(\Rn)}$ are bounded. 

{\bf Claim}: $c_0<0$. 

Indeed, to prove the claim, it's enough to show that there exists $v\in U_1$ such that $I_{a,f}(v)<0$. Note that, thanks to Remark \ref{r:30-7-2}, we can choose $u\in U$ such that $\prescript{}{H^{-s}}{\langle}f,u{\rangle}_{H^s}>0$. Therefore,
$$I_{a,f}(tu)\leq t^2|u|_{L^{p+1}(\Rn)}^{p+1}\bigg[\frac{p\|a\|_{L^\infty(\Rn)}}{2}-\frac{t^{p-1}}{p+1}\bigg]-t\prescript{}{H^{-s}}{\langle}f,u{\rangle}_{H^s} <0.$$
for $t<<1$. Also by Remark \ref{r:30-7-2}, $tu\in U_1$. Hence the claim follows.

Thanks to the above claim, $I_{a,f}(u_n)< 0$ for large $n$. Consequently,
$$0>I_{a,f}(u_n)\geq \bigg(\frac{1}{2}-\frac{1}{p(p+1)}\bigg)\|u_n\|^2_{\Hs}-\prescript{}{H^{-s}}{\langle}f,u_n{\rangle}_{H^s}.$$
This in turn implies $\prescript{}{H^{-s}}{\langle}f,u_n{\rangle}_{H^s}> 0$ for all large $n$ (since $p>1$).
Consequently, $\frac{d}{dt}\tilde J(tu_n)<0$ for $t>0$ small enough. Thus, by Step 1, there exists $t_n\in (0,1)$ such that $\frac{d}{dt}\tilde J(t_n u_n)=0$.
Moreover, $t_n$ is unique since,
	$$\frac{d^2}{dt^2}\tilde J(tu) = \|u\|_{\Hs}^2-p\|a\|_{L^\infty(\Rn)}t^{p-1}\|u\|_{L^{p+1}(\Rn)}^{p+1}= (1-t^{p-1})\|u\|_{\Hs}^2>0,\;\forall u\in U,\;\forall t\in [0,\;1).$$
	
{\bf Step 3}: In this step we show  that 	
\be\label{J3}
	\liminf_{n\rightarrow\infty}\{\tilde J(u_n)-\tilde J(t_nu_n)\}>0.
	\ee
	
We observe that, $\tilde J(u_n)-\tilde J(t_nu_n)=\displaystyle\int_{t_n}^{1}\frac{d}{dt}\{\tilde  J(tu_n)\} \, {\rm d}t$ and that for all $n\in\mathbb{N},$ there is $\xi_n>0$ such that
	$t_n\in(0,\;1-2\xi_n)$ and $\frac{d}{dt}\tilde J(tu_n)\geq \al$ for $t\in[1-\xi_n,\;1]$.\\
	To establish \eqref{J3}, it is enough to show that $\xi_n>0$ can be chosen independent of $n\in\mathbb{N}$. But this is true since, $\frac{d}{dt}\tilde J(tu_n)|_{t=1}\geq \al$ and for the boundedness of $\{u_n\},$\\
	$$\bigg|\frac{d^2}{dt^2}\tilde J(tu_n)\bigg| = \bigg|\|u_n\|_{\Hs}^2 - p\|a\|_{L^\infty(\Rn)}t^{p-1}\|u_n\|_{L^{p+1}(\Rn)}^{p+1}\bigg|= \bigg|(1-t^{p-1})\|u_n\|_{\Hs}^2\bigg|\leq C,$$
	for all $n\geq 1$ and $t\in[0,\;1]$. 
	
\vspace{2mm}	
	
{\bf Step 4:} From the definition of $I_{a,f}$ and $\tilde J$, it immediately follows that $\frac{d}{dt}I_{a,f}(tu)\geq \frac{d}{dt}\tilde J(tu)$ for all $u\in\Hs$ and for all $t>0$. Hence,
$$I_{a,f}(u_n)-I_{a,f}(t_nu_n)= \int_{t_n}^{1}\frac{d}{dt}(I_{a,f}(tu_n))\; {\rm d}t\geq \int_{t_n}^{1}\frac{d}{dt}\tilde J(tu_n)\; {\rm d}t = \tilde J(u_n)-\tilde J(t_nu_n)$$
	Since, $\{u_n\}\in U$  is a minimizing sequence for $I_{a,f}$, and $t_nu_n\in U_1,$ we conclude using \eqref{J3} that\\
	$$c_0 = \inf_{u\in U_1}I_{a,f}(u)< \inf_{u\in U}I_{a,f}(u)\equiv c_1$$
\end{proof}

Next, we introduce the problem at infinity associated to \eqref{MAT2}:
\be\lab{30-7-5}
(-\De)^s u+u=u_+^p \quad\text{in}\quad\Rn,
\ee
and the corresponding functional $I_{1,0}:\Hs\to\R$ defined by
$$I_{1,0}(u)=\frac{1}{2}\|u\|_{\Hs}^2-\frac{1}{p+1}\int_{\Rn}u_+^{p+1}\, {\rm d}x.$$
Define,
\be\lab{1-8-5}X_1:=\{u\in\Hs\setminus\{0\}: (I_{1,0})'(u)=0\}, \quad S^\infty:=\inf_{X_1} I_{1,0}.\ee
\begin{remark}\lab{r:31-7-1}
Clearly $I_{1,0}(u)=\frac{p-1}{2(p+1)}\|u\|^2_{\Hs}$ on $X_1$. From \eqref{17-7-5}, we also have $\|u\|^2_{\Hs} \geq S_1^\frac{p+1}{p-1}$ on $X_1$. Therefore, $S^\infty\geq \frac{p-1}{2(p+1)}S_1^\frac{p+1}{p-1}>0$.
Further, it's known from \cite{Frank} that $S_1$ is achieved by unique positive radial ground state solution $w^*$ of \eqref{AT0.8}. Therefore,
$$I_{1,0}(w^*)=\frac{p-1}{2(p+1)}S_1^\frac{p+1}{p-1}.$$
Hence $S^\infty$ is achieved by $w^*$.
\end{remark}

%
%
%

\begin{proposition}\lab{p:30-7-1}
Assume \eqref{J1.3} holds. Then $I_{a,f}$ has a critical point $u_0\in U_1$ with $I_{a,f}(u_0)=c_0$. In particular, $u_0$ is a positive weak solution to \eqref{MAT1}.
\end{proposition}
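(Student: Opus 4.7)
The plan is to obtain $u_0$ as the strong $\Hs$ limit of a perturbed minimizing sequence for $I_{a,f}$ on $U_1$, and to use the Palais--Smale decomposition to rule out any bubbling.

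First I would pick a minimizing sequence $\{u_n\}\subset U_1$ with $I_{a,f}(u_n)\to c_0$; by Lemma \ref{l:30-7-1} we have $c_0<c_1$, and Step 2 of its proof already showed $c_0<0$. The constraint $g(u_n)\geq 0$ yields $\int_{\Rn} a\, u_{n,+}^{p+1}\leq \frac{1}{p}\|u_n\|_{\Hs}^{2}$, and inserting this into $I_{a,f}(u_n)\to c_0$ gives boundedness of $\|u_n\|_{\Hs}$ (since $\frac{1}{2}-\frac{1}{p(p+1)}>0$ for $p>1$). Moreover the energy gap $c_0<c_1$ keeps the sequence uniformly separated from $U$. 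Ekeland's variational principle applied on $\overline{U_1}$ then produces (still denoted $\{u_n\}$) a sequence lying in the open set $U_1$ for all large $n$, with $I_{a,f}(u_n)\to c_0$ and $I_{a,f}'(u_n)\to 0$ in $\hms$; in particular $\{u_n\}$ is a Palais--Smale sequence for $I_{a,f}$ at level $c_0$.

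Next I would invoke the analogue of Proposition \ref{PSP} for the functional $I_{a,f}$; the adaptation is routine because Remark \ref{r:30-7-3} together with the strong maximum principle forces every non-trivial solution of $(-\De)^s w+w=w_+^p$ to be positive, hence to solve \eqref{AT0.8}. Passing to a subsequence,
$$u_n-u_0-\sum_{i=1}^{m}w_i(\cdot-x_n^i)\to 0 \quad \text{in}\quad \Hs,\qquad |x_n^i|\to\infty,$$
and
$$c_0 \;=\; I_{a,f}(u_0)+\sum_{i=1}^{m}I_{1,0}(w_i),$$
with $u_0$ a weak solution of \eqref{MAT2}, each $w_i>0$ a solution of \eqref{AT0.8}, and $I_{1,0}(w_i)\geq S^{\infty}>0$ by Remark \ref{r:31-7-1}.

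The core step, and the principal obstacle, is to force $m=0$. Using Brezis--Lieb together with the orthogonality produced by $|x_n^i|\to\infty$ and the identity $\|w_i\|_{\Hs}^{2}=\|w_i\|_{L^{p+1}(\Rn)}^{p+1}$ obtained by testing the bubble equation with $w_i$ itself, one arrives at
$$0\;\leq\;\lim_{n\to\infty} g(u_n) \;=\; g(u_0)\,+\,(1-p\|a\|_{L^\infty(\Rn)})\sum_{i=1}^{m}\|w_i\|_{\Hs}^{2}.$$
Under $(A_2)$ one has $\|a\|_{L^\infty(\Rn)}\geq 1 > 1/p$, so the last sum is multiplied by a nonpositive factor, whence $g(u_0)\geq 0$, i.e.\ $u_0\in\overline{U_1}$. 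Now if $u_0=0$ then $c_0=\sum_i I_{1,0}(w_i)\geq 0$, contradicting $c_0<0$; if $u_0\in U$ then $I_{a,f}(u_0)\geq c_1>c_0$, again impossible; hence $u_0\in U_1\setminus\{0\}$ and $I_{a,f}(u_0)\geq c_0$, which combined with $I_{1,0}(w_i)\geq S^{\infty}>0$ forces $m=0$. The resulting strong convergence $u_n\to u_0$ in $\Hs$ gives $I_{a,f}(u_0)=c_0$ and $I_{a,f}'(u_0)=0$; finally Remark \ref{r:30-7-3} and the fractional maximum principle promote $u_0$ to a positive weak solution of \eqref{MAT1}.
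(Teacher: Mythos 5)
Your proposal is correct and follows essentially the same route as the paper: Ekeland to extract a Palais--Smale sequence in $U_1$ at level $c_0$, the decomposition of Proposition \ref{PSP}, and the sign of $g$ on the bubbles (using $p\|a\|_{L^\infty(\Rn)}>1$) together with the splitting of $\lim_n g(u_n)$ to force $m=0$. The only difference is organizational: the paper assumes $m\geq 1$, deduces $g(u_0)\leq 0$ from the energy identity and then contradicts $\liminf_n g(u_n)\geq 0$, whereas you read off $g(u_0)\geq 0$ first and then let the energy identity kill the bubbles --- the ingredients are identical.
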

\begin{proof} We decompose the proof into few steps.

\vspace{2mm}

{\bf Step 1}: $c_0>-\infty$.

Since $I_{a,f}(u)\geq \tilde J(u)$, where $\tilde J$ is defined as in \eqref{31-7-1}, in order to prove Step 1, it is enough to show that $\tilde J$ is bounded from below. From definition of $U_1$, it immediately follows that
\be\lab{31-7-3}\tilde J(u)\geq [\tfrac{1}{2}-\tfrac{1}{p(p+1)}]\|u\|_{\Hs}^2 - \|f\|_{H^{-s}(\Rn)}\|u\|_{\Hs}\;\, \mbox{for all}\, u\in {U}_1.\ee
As RHS is quadratic function in $\|u\|_{\Hs}$, $\tilde J$ is bounded from below. Hence Step 1 follows.

\vspace{2mm} 

{\bf Step 2}: In this step we show that there exists a bounded PS sequence 
$\{u_n\}\subset U_1$ for $I_{a,f}$ at level $c_0$.

Let $\{u_n\}\subset \bar U_1$ such that $I_{a,f}(u_n)\to c_0$. Since $I_{a,f}(u)\geq \tilde J(u)$ from \eqref{31-7-3}, it follows that $\{u_n\}$ is a bounded sequence. Since by Lemma \ref{l:30-7-1}, $c_0<c_1$, without restriction we can assume $u_n\in U_1$. Therefore, by Ekeland's variational principle from $\{u_n\}$, we can extract a PS sequence in $U_1$ for $I_{a,f}$ at level $c_0$. We again call it by $\{u_n\}$. That completes the proof of Step 2.

\vspace{2mm}

{\bf Step 3:}  In this step we show that there exists $u_0\in U_1$ such that $u_n\to u_0$ in $\Hs$.

Applying Proposition \ref{PSP}, it follows
		\be\label{J5}
		u_n-u_0 - \sum_{i=1}^{m}w^i(x-x_n^i)\rightarrow 0\;\text{ in }\Hs
		\ee
		for some $u_0$ with $(I_{a,f})'(u_0) =0$ and some appropiate $w^i,\;\{x_n^i\}$. To prove Step 3, we need to show that $m=0$. We argue by method of contradiction. 
		Suppose there is $w^i\neq 0$ ($i\in\{1, 2,\cdots, m\}$) such that $(I_{1,0})'(w^i) = 0$. i.e, $\|w^i\|^2_{\Hs} =\int_{\Rn}(w^i_+)^{p+1}\, {\rm d}x.$ Therefore,
\Bea
g(w^i)=\|w^i\|_{\Hs}^2-p\|a\|_{L^\infty(\Rn)}\|w^i\|^{p+1}_{L^{p+1}(\Rn)} &= &\int_{\Rn}(w^i_+)^{p+1}\, {\rm d}x-p\|a\|_{L^\infty(\Rn)}\int_{\Rn}|w^i|^{p+1}\, {\rm d}x\\
&\leq&\|w^i\|^{p+1}_{L^{p+1}(\Rn)}(1-p\|a\|_{L^\infty(\Rn)})<0.
\Eea		
The last inequality follows from the fact that $p>1$ and $\|a\|_{L^\infty(\Rn)}\geq 1$. Now from Remark \ref{r:31-7-1}, 	$I_{1,0}(w^i)\geq S^\infty>0$	for all $1\leq i\leq m$. Therefore, $I_{a,f}(u_n)\rightarrow I_{a,f}(u_0)+\sum_{i=1}^{m}I_{1,0}(w_i)$ implies $I_{a,f}(u_0)<c_0$. This in turn implies, $u_0\not\in U_1$. Therefore, $g(u_0)\leq 0$.

Now we evaluate, $g\big(u_0+\sum_{i=1}^{m}w^i(x-x_n^i)\big)$. Since $u_n\in U_1$, we have $g(u_n)\geq 0$. Therefore, applying uniform continuity of $g$, we obtain from \eqref{J5}  that 
		\be\label{J8}
		0\leq \liminf_{n\rightarrow\infty}g(u_n)=\liminf_{n\rightarrow\infty} g\big(u_0+\sum_{i=1}^{m}w^i(x-x_n^i)\big).
		\ee
		On the other hand, since $|x_n^i|\rightarrow\infty,\;|x_n^i-x_n^j|\rightarrow\infty \text{, for }1\leq i\neq j\leq m$\\
		the supports of $u_0(\bullet)$ and $w^i(\bullet-x_n^i)$ are going increasingly far away as $n\to\infty$ and we get
		$$\lim_{n\rightarrow\infty}g\big(u_0+\sum_{i=1}^{m}w^i(x-x_n^i)\big) = g(u_0)+\lim_{n\rightarrow\infty}\sum_{i=1}^{m}g\bigg(w^i(x-x_n^i)\bigg)=g(u_0)+\sum_{i=1}^{m}g(w^i),$$
where the last equality is due to the fact that $g$ is invariant under translation in $\Rn$. Now since $g(u_0)\leq 0$ and $g(w^i)<0,$ for $i\leq i\neq j\leq m$, we get a contradiction to $\eqref{J8}$. Hence Step 3 follows.
		
\vspace{2mm}

{\bf Step 4:} From the previous steps we conclude that $I_{a,f}(u_0)=c_0$ and 
$(I_{a,f})'(u_0)=0$. Therefore, $u_0$ is a weak solution to \eqref{MAT2}. Combining this with Remark \ref{r:30-7-3}, we conclude the proof of the proposition. 
\end{proof}

\begin{proposition}\lab{p:31-7-2} 
Assume \eqref{J1.3} holds. Then $I_{a,f}$ has a second critical point $v_0\neq u_0$. In particular, $v_0$ is a positive solution to \eqref{MAT1}.
\end{proposition}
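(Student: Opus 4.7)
The plan is to obtain $v_0$ as a mountain-pass type critical point lying strictly above $u_0$ in energy and strictly below the first compactness-breaking level $c_0+S^\infty$. Because $u_0\in U_1$ (open) with $I_{a,f}(u_0)=c_0<c_1$, the functional already has a natural barrier: every continuous path leaving $U_1$ and ending in $U_2$ must cross $U$, where $I_{a,f}\geq c_1$. So once one produces a target point $v_1\in U_2$ with energy below $c_0$, the associated min-max level $\gamma$ will automatically lie in $(c_0,\infty)$, and the task reduces to the sharp upper bound $\gamma<c_0+S^\infty$.

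For this bound I would test against the explicit path $\eta(t):=u_0+tw^*(\cdot-y)$, where $w^*$ is the ground state of \eqref{AT0.8} and $y\in\Rn$ is arbitrary. Using that $u_0$ is a weak solution of \eqref{MAT2} (so the linear cross-term $\langle u_0,v\rangle_{\Hs}-\prescript{}{H^{-s}}{\langle}f,v{\rangle}_{H^s}$ vanishes) together with the strict pointwise inequality $(u_0+v)^{p+1}>u_0^{p+1}+(p+1)u_0^{p}v+v^{p+1}$ on $\{u_0>0\}\cap\{v>0\}$ for $p>1$, one obtains
\begin{equation*}
I_{a,f}\bigl(u_0+tw^*(\cdot-y)\bigr)\;<\;c_0+I_{a,0}\bigl(tw^*(\cdot-y)\bigr)\qquad\text{for every }t>0.
\end{equation*}
Optimizing in $t$ and invoking $(A_2)$ (so that $\int a(\cdot+y)(w^*)^{p+1}>\int(w^*)^{p+1}$) together with the identification $S^\infty=\tfrac{p-1}{2(p+1)}S_1^{(p+1)/(p-1)}$ from Remark~\ref{r:31-7-1} gives
\begin{equation*}
\max_{t>0}I_{a,0}\bigl(tw^*(\cdot-y)\bigr)=\frac{p-1}{2(p+1)}\,\frac{\|w^*\|_{\Hs}^{2(p+1)/(p-1)}}{\bigl(\int_{\Rn}a(x+y)(w^*)^{p+1}\,{\rm d}x\bigr)^{2/(p-1)}}<S^\infty.
\end{equation*}
Combining the two strict estimates and choosing $T$ so large that $v_1:=u_0+Tw^*(\cdot-y)$ sits in $U_2$ with $I_{a,f}(v_1)<c_0$ produces the required admissible path.

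Setting $\Gamma:=\{\sigma\in C([0,1];\Hs):\sigma(0)=u_0,\ \sigma(1)=v_1\}$ and $\gamma:=\inf_{\sigma\in\Gamma}\max_{s\in[0,1]}I_{a,f}(\sigma(s))\in(c_0,c_0+S^\infty)$, the standard deformation lemma yields a Palais--Smale sequence $\{u_n\}$ for $I_{a,f}$ at level $\gamma$. Applying the analogue of Proposition~\ref{PSP} for $I_{a,f}$ (the weak limit and bubbles being nonnegative by Remark~\ref{r:30-7-3} and the corresponding sign test for \eqref{AT0.8}), up to a subsequence one has $u_n-\bar u-\sum_{i=1}^{m}w_i(\cdot-x_n^i)\to 0$ in $\Hs$ with $(I_{a,f})'(\bar u)=0$, $m\geq 0$, nontrivial nonnegative $w_i$ solving \eqref{AT0.8}, $|x_n^i|\to\infty$, and $\gamma=I_{a,f}(\bar u)+\sum I_{1,0}(w_i)$, each $I_{1,0}(w_i)\geq S^\infty$. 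If $m=0$ then $u_n\to\bar u$ strongly and $I_{a,f}(\bar u)=\gamma>c_0=I_{a,f}(u_0)$, so $v_0:=\bar u\neq u_0$. If $m\geq 1$ then $I_{a,f}(\bar u)\leq\gamma-S^\infty<c_0<0=I_{a,f}(0)$, hence $\bar u\not\equiv 0$; the strong maximum principle then gives $\bar u>0$, and $I_{a,f}(\bar u)<I_{a,f}(u_0)$ again forces $\bar u\neq u_0$. In either case $v_0:=\bar u$ is the required second positive solution. The chief obstacle is the strict bound $\gamma<c_0+S^\infty$: one must show that attaching a bubble onto $u_0$ costs strictly less energy than the bubble alone in the limit problem, which requires combining three independent strict gains---cancellation of the linear cross-term via the equation for $u_0$, strictness of the superlinear binomial inequality for $p>1$, and strict positivity of $\int(a-1)(w^*(\cdot-y))^{p+1}$ under $(A_2)$---in order to overcome the positive interaction term $\int a u_0^p(tw^*(\cdot-y))$, which is of comparable magnitude to the gain extracted.
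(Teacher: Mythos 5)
Your argument is correct and follows the same overall route as the paper's proof: a mountain-pass level $\gamma$ between $u_0$ and a far-away copy of the ground state glued onto $u_0$, squeezed into $(c_0,\,c_0+S^\infty)$ by combining the equation for $u_0$ with the superadditivity inequality $(s+t)^{p+1}\geq s^{p+1}+t^{p+1}+(p+1)s^pt$, followed by the Palais--Smale decomposition. Two points of comparison. First, the paper builds the endpoint with the dilations $w_t(x)=w^*(x/t)$ and uses the scaling of the semi-norm and $L^2$-norm to check $u_0+w_{t_0}\in U_2$, whereas you use translates $tw^*(\cdot-y)$ and the extra strict gain $\int(a-1)(w^*(\cdot-y))^{p+1}>0$; both work, and your extra gain is not actually needed since the binomial strictness already yields $I_{a,f}(u_0+v)<I_{a,f}(u_0)+I_{1,0}(v)$ (one small slip in your write-up: the cross term $\langle u_0,v\rangle_{\Hs}-\prescript{}{H^{-s}}{\langle}f,v{\rangle}_{H^s}$ does not vanish, it equals $\int_{\Rn}au_0^pv$, which is then absorbed by the binomial term exactly as in the paper's Claim 2 --- your displayed inequality is nevertheless correct). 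Second, and more substantively, your treatment of the compactness step is more careful than the paper's: the paper asserts strong convergence of the PS sequence at level $\gamma<c_0+S^\infty$ directly from Proposition \ref{PSP}, which tacitly requires that every critical point of $I_{a,f}$ has energy at least $c_0$ (not obvious, since the weak limit need not lie in $U_1$, where $c_0$ is the infimum). Your dichotomy sidesteps this: if a bubble detaches ($m\geq 1$), the weak limit $\bar u$ is itself a critical point with $I_{a,f}(\bar u)\leq\gamma-S^\infty<c_0<0=I_{a,f}(0)$, hence nontrivial, nonnegative by Remark \ref{r:30-7-3}, and distinct from $u_0$; if no bubble detaches, strong convergence gives a critical point at level $\gamma>c_0$. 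Either branch produces the second positive solution, so your version is complete and, on this point, fills a small gap in the paper's exposition.
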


\begin{proof}
Let $u_0$ be the critical point obtained in Proposition \ref{p:30-7-1} and $w^*$ be as in Remark \ref{r:31-7-1}. Set, $w_t(x):=w^*\big(\frac{x}{t}\big)$

{\bf Claim 1:} $u_0+w_t\in U_2$ for $t>0$ large enough.

Indeed, as $p>1$ and $|a|_{L^\infty(\Rn)}\geq 1$, 
\Bea
g(u_0+w_t)&\leq&\|u_0\|^2_{\Hs}+\|w_t\|^2_{\Hs}+2\<u_0,w_t\>_{H^s}-p(|u_0|^{p+1}_{L^{p+1}(\Rn)}+|w_t|^{p+1}_{L^{p+1}(\Rn)})\\
&\leq&(1+\eps)\|w_t\|^2_{\Hs}+(1+C(\eps))\|u_0\|^2_{\Hs}-p(|u_0|^{p+1}_{L^{p+1}(\Rn)}+|w_t|^{p+1}_{L^{p+1}(\Rn)}),
\Eea
where to get the last inequality, we have used Young's inequality with $\eps>0$.  Further, as $w^*$ solves \eqref{AT0.8}, we have
$$\|w_t\|^2_{\Hs}=t^{N-2s}[w^*]_{H^s}^2+t^N|w^*|_{L^2(\Rn)}^2 \quad\text{and}\quad |w_t|^{p+1}_{L^{p+1}(\Rn)}=t^N\|w^*\|_{\Hs}^2,$$
where $[.]_{H^s}$ denotes the seminorm in $\Hs$. Therefore,
\Bea
g(u_0+w_t)&\leq&(1+C(\eps))\|u_0\|^2_{\Hs}-p|u_0|^{p+1}_{L^{p+1}(\Rn)}+[w^*]_{H^s}^2\big[(1+\eps)t^{N-2s}-pt^N\big]\\
&&\qquad\qquad+ \, t^N|w^*|_{L^2(\Rn)}^2\big[(1+\eps)-p\big].
\Eea
Choose $\eps>0$ such that $1+\eps<p$. Therefore, $g(u_0+w_t)<0$ for $t$ to be large enough. Hence the claim follows. 

\vspace{2mm}

{\bf Claim 2:} $I_{a,f}(u_0+w_t)<I_{a,f}(u_0)+I_{1,0}(w_t),\;\forall\, t>0 $.

Indeed, since $u_0, \, w_t>0$, taking $w_t$ as the test function for \eqref{MAT2} yields $$ {\langle} u_0,\;w_t{\rangle}_{\Hs} =\int_{\Rn}a(x)u_0^pw_t\, {\rm d}x+\prescript{}{H^{-s}}{\langle}f,w_t{\rangle}_{H^s}.$$
Therefore, using the above expression and the fact that $a\geq 1$, we obtain
\Bea
I_{a,f}(u_0+w_t)&=& \frac{1}{2}\|u_0\|_{\Hs}^2+\frac{1}{2}\|w_t\|_{\Hs}^2+ {\langle} u_0,\;w_t{\rangle}_{\Hs} \\
&&\qquad-\frac{1}{p+1}\int_{\Rn}a(x)(u_0+w_t)^{p+1}\;{\rm d}x-\prescript{}{H^{-s}}{\langle}f,u_0{\rangle}_{H^s}-\prescript{}{H^{-s}}{\langle}f,w_t{\rangle}_{H^s}\\
&=&I_{a,f}(u_0)+I_{1,0}(w_t) +\langle u_0,\;w_t\rangle_{\Hs} 
+\frac{1}{p+1}\int_{\Rn}a(x)u_0^{p+1}\;{\rm d}x\\
&&\quad+\frac{1}{p+1}\int_{\Rn}w_t^{p+1}\;{\rm d}x
-\frac{1}{p+1}\int_{\Rn}a(x)(u_0+w_t)^{p+1}\;{\rm d}x-\prescript{}{H^{-s}}{\langle}f,w_t{\rangle}_{H^s}\\
&\leq&I_{a,f}(u_0)+I_{1,0}(w_t) +\frac{1}{p+1}\int_{\Rn}a(x)\bigg[(p+1)u_0^pw_t+u_0^{p+1}+w_t^{p+1}-(u_0+w_t)^{p+1}\bigg]{\rm d}x\\
&<&I_{a,f}(u_0)+I_{1,0}(w_t). 
\Eea
Hence the Claim follows.

\vspace{2mm}

Also, by direct computation, it follows 
\be\lab{1-8-2}
I_{1,0}(w_t)=\frac{t^{N-2s}}{2}[w^*]_{H^s}^2+\frac{t^N}{2} \|w^*\|_{L^2(\Rn)}^2-\frac{t^N}{p+1}|w^*|_{L^{p+1}(\Rn)}^{p+1}\to -\infty \quad\text{as}\quad t\to\infty,
\ee
From \eqref{1-8-2}, it is also easy to see that 
$$
\sup_{t>0} I_{1,0}(w_t)=I_{1,0}(w_1)=I_{1,0}(w^*)=S^\infty,
$$
where the last equality is due to Remark \ref{r:31-7-1}.
Combing this with Claim 2 yields
\be\lab{1-8-6}
I_{a,f}(u_0+w_t)<I_{a,f}(u_0)+ S^\infty \quad\forall\, t>0.
\ee

Combining \eqref{1-8-2} with Claim 2, we have \be\lab{1-8-3}I_{a,f}(u_0+w_t)<I_{a,f}(u_0) \quad\text{for}\quad t \quad\text{large enough}.\ee
 Fix $t_0>0$ large enough such that \eqref{1-8-3} and Claim 1 are satisfied.
 
 Then we set 
$$\ga:=\inf_{i\in\Ga}\max_{t\in[0,1]} I_{a,f}\big(i(t)\big),$$
where $$\Ga:=\{i\in C\big([0,1], \Hs\big) : i(0)=u_0,\quad i(1)= u_0+w_{t_0}\}.$$
As $u_0\in U_1$ and $u_0+w_{t_0}\in U_2$, for every $i\in \Ga$, there exists $t_i\in(0,1)$
such that $i(t_i)\in U$. Therefore, $$\max_{t\in[0,1]} I_{a,f}(i(t))\geq I_{a,f}\big(i(t_i)\big)\geq \inf_{U}I_{a,f}(u)=c_1.$$
Thus, $\ga\geq c_1>c_0=I_{a,f}(u_0)$.
Here in the last inequality we have used Lemma \ref{l:30-7-1}.

\vspace{2mm}

{\bf Claim 3:} $\ga<S^\infty$, where $S^\infty$ is as defined in \eqref{1-8-5}.

It's easy to see that  $\lim_{t\to 0}\|w_t\|_{\Hs}=0$. Thus, if we define
$\tilde i(t)=u_0+w_{tt_0}$, then $\lim_{t\to 0}\|\tilde i(t)-u_0\|_{\Hs}=0$. Consequently, $\tilde i\in \Ga$. Therefore, using \eqref{1-8-6}, we obtain
$$\ga\leq \max_{t\in[0,1]}I_{a,f}(\tilde i(t))=\max_{t\in[0,1]}I_{a,f}(u_0+w_{tt_0})<I_{a,f}(u_0)+S^\infty.$$
Hence the claim follows. 

Hence $$I_{a,f}(u_0)<\ga<I_{a,f}(u_0)+S^\infty.$$

Using Ekeland's variational principle, there exists a PS sequence $\{u_n \}$ 
for $I_{a,f}$ at level $\ga$. Doing a standard computation yields $\{u_n\}$ is bounded sequence. Since, by Remark \ref{r:31-7-1}, we have $S^\infty=I_{1,0}(w^*)$, from Proposition \ref{PSP} we can conclude that $u_n\to v_0$, for some $v_0\in \Hs$ such that $(I_{a,f})'(v_0)=0$ and $I_{a,f}(v_0)=\ga$. Further, as
$I_{a,f}(u_0)<\ga$, we conclude $v_0\neq u_0$. 

$(I_{a,f})'(v_0)=0\Longrightarrow v_0$ is a weak solution to \eqref{MAT2}. Combining this with Remark \ref{r:30-7-3}, we conclude the proof of the proposition. 

\end{proof}

\begin{lemma}\lab{l:J1.3}
If $\|f\|_{H^{-s}(\Rn)}<C_pS_1^{\tfrac{p+1}{2(p-1)}}$, then \eqref{J1.3} holds.
\end{lemma}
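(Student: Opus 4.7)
The plan is to reduce \eqref{J1.3} to a scalar inequality after using two ingredients: the duality bound on $\prescript{}{H^{-s}}{\langle}f,u{\rangle}_{H^s}$ and the sharp Sobolev-type constant $S_1$ from \eqref{17-7-5}.

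First, I would fix any $u\in\Hs$ with $\|u\|_{L^{p+1}(\Rn)}=1$. By the very definition
\[
S_1=\inf_{v\in\Hs\setminus\{0\}} \frac{\|v\|_{\Hs}^2}{\|v\|_{L^{p+1}(\Rn)}^2},
\]
I obtain the lower bound $\|u\|_{\Hs}^2\geq S_1$, i.e. $\|u\|_{\Hs}\geq S_1^{1/2}$. On the other hand, the duality estimate gives
\[
\prescript{}{H^{-s}}{\langle}f,u{\rangle}_{H^s}\le \|f\|_{H^{-s}(\Rn)}\,\|u\|_{\Hs}.
\]

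Next I would combine these two to write, for such $u$,
\[
C_p\|u\|_{\Hs}^{\frac{2p}{p-1}}-\prescript{}{H^{-s}}{\langle}f,u{\rangle}_{H^s}\;\ge\;\|u\|_{\Hs}\Bigl(C_p\|u\|_{\Hs}^{\frac{p+1}{p-1}}-\|f\|_{H^{-s}(\Rn)}\Bigr),
\]
using $\tfrac{2p}{p-1}-1=\tfrac{p+1}{p-1}$. Since the map $t\mapsto t^{\frac{p+1}{p-1}}$ is increasing on $(0,\infty)$ and $\|u\|_{\Hs}\ge S_1^{1/2}$, the bracket on the right is bounded below by
\[
C_pS_1^{\frac{p+1}{2(p-1)}}-\|f\|_{H^{-s}(\Rn)},
\]
which is strictly positive by the hypothesis $\|f\|_{H^{-s}(\Rn)}<C_pS_1^{\frac{p+1}{2(p-1)}}$.

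Putting these pieces together, the infimum in \eqref{J1.3} is bounded below by the positive constant $S_1^{1/2}\bigl(C_pS_1^{\frac{p+1}{2(p-1)}}-\|f\|_{H^{-s}(\Rn)}\bigr)$, which proves the lemma. There is no serious obstacle here: the only things to check are the exponent arithmetic $\tfrac{2p}{p-1}-1=\tfrac{p+1}{p-1}$ and that the lower bound $\|u\|_{\Hs}\ge S_1^{1/2}$ on the constraint set is strict enough to absorb the linear term $\prescript{}{H^{-s}}{\langle}f,u{\rangle}_{H^s}$, both of which follow transparently from the hypothesis on $\|f\|_{H^{-s}(\Rn)}$.
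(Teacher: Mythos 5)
Your proof is correct and uses the same two ingredients as the paper's argument, namely the lower bound $\|u\|_{\Hs}\ge S_1^{1/2}\|u\|_{L^{p+1}(\Rn)}$ coming from the definition of $S_1$ in \eqref{17-7-5} and the duality estimate $\prescript{}{H^{-s}}{\langle}f,u{\rangle}_{H^s}\le\|f\|_{H^{-s}(\Rn)}\|u\|_{\Hs}$; the only difference is that you work directly on the constraint set $\|u\|_{L^{p+1}(\Rn)}=1$, whereas the paper first reformulates \eqref{J1.3} on the set $U$ via the equivalences \eqref{1-8-2'} and invokes Lemma \ref{l:1-8-1}, which encodes the very same Sobolev bound. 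Your exponent arithmetic $\tfrac{2p}{p-1}-1=\tfrac{p+1}{p-1}$ and the resulting uniform positive lower bound $S_1^{1/2}\bigl(C_pS_1^{\frac{p+1}{2(p-1)}}-\|f\|_{H^{-s}(\Rn)}\bigr)$ for the infimum are both correct.
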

\begin{proof}
Using the given hypothesis, we can obtain $\eps>0$ such that $\|f\|_{H^{-s}(\Rn)}<C_pS_1^{\tfrac{p+1}{2(p-1)}}-\eps$. Therefore, using Lemma \ref{l:1-8-1}, we have
$$ \prescript{}{H^{-s}}{\langle}f, u{\rangle}_{H^s} \leq \|f\|_{H^{-s}(\Rn)}\|u\|_{\Hs}<\big[C_pS_1^{\tfrac{p+1}{2(p-1)}}-\eps\big]\|u\|_{\Hs}\leq \frac{p-1}{p}\|u\|_{\Hs}^2-\eps\|u\|_{\Hs},$$
for all $u\in U$. Therefore,
$$\frac{p-1}{p}\|u\|_{\Hs}^2-\prescript{}{H^{-s}}{\langle}f,u{\rangle}_{H^s}> \eps\|u\|_{\Hs} \quad\forall\, u\in U.$$
i.e., $$\inf_{U}\bigg[\frac{p-1}{p}\|u\|_{\Hs}^2-\prescript{}{H^{-s}}{\langle}f,u{\rangle}_{H^s}\bigg]\geq \eps\inf_{U}\|u\|_{\Hs}.$$ 
Since, by Remark \ref{r:30-7-1}, we have $\|u\|_{\Hs}$ is bounded away from $0$ on $U$,  the above expression implies
\be\lab{1-8-1}\inf_{U}\bigg[\frac{p-1}{p}\|u\|_{\Hs}^2-\prescript{}{H^{-s}}{\langle}f,u{\rangle}_{H^s}\bigg]>0.\ee
On the other hand, 
\bea\lab{1-8-2'}
\eqref{J1.3}&\iff& C_p\frac{\|u\|_{\Hs}^{\tfrac{2p}{p-1}}}{\|u\|_{L^{p+1}(\Rn)}^{\tfrac{p+1}{p-1}}} - \prescript{}{H^{-s}}{\langle}f,u{\rangle}_{H^s} >0\quad \text{for}\quad \|u\|_{L^{p+1}(\Rn)}=1\no\\
		&\iff& C_p\frac{\|u\|_{\Hs}^{\tfrac{2p}{p-1}}}{\|u\|_{L^{p+1}(\Rn)}^{\tfrac{p+1}{p-1}}} -\prescript{}{H^{-s}}{\langle}f,u{\rangle}_{H^s} >0\quad \text{for}\quad u\in U\no\\
		&\iff& \frac{p-1}{p}\|u\|_{\Hs}^2 -\prescript{}{H^{-s}}{\langle}f,u{\rangle}_{H^s}> 0 \quad\text{for }\quad u\in U.
		\eea
Clearly, \eqref{1-8-1} insures RHS of \eqref{1-8-2'} holds. Hence the lemma follows.

\end{proof}

\vspace{2mm}

{\bf Proof of Theorem \ref{th:ex-f} completed:}
\begin{proof}
Combining Proposition \ref{p:30-7-1} and Proposition \ref{p:31-7-2} with Lemma \ref{l:J1.3}, we conclude the proof of Theorem \ref{th:ex-f}.

\end{proof}

\section{Proof of Theorem \ref{MT}}
In this section we prove multiplicity of positive solutions to \eqref{MAT1} when $a$ satisfies the assumption $\mathbf{(A_1)}$ 
in the spirit of \cite{Adachi}  (also see \cite{Bahri-Li}, \cite{Ad-Ta}).  We aim to obtain the first positive solution of the \eqref{MAT1} through a perturbation of $f \equiv 0$ and exploiting the mountain pass geometry of the functional. For this purpose we need several lemmas and propositions 
along the line proved in \cite{Adachi}. 
\medskip 

We set
\begin{equation}\lab{4-8-1}
\Si:=\{u\in\Hs: \|u\|_{\Hs}=1\} \quad\text{and}\quad \tilde \Si_+:=\{u\in \Si : u_+\not\equiv 0\}.
\end{equation} 
Define a modified functional $J_{a, f} : \tilde\Sigma_{+} \longrightarrow \mathbb{R}$ by 
\be\lab{5-8-3}
J_{a,f} := \max_{t>0} I_{a,f}(tv),
\ee
where $I_{a,f}$ is defined as in \eqref{EF-1}. Set, 
\begin{align*}
& \underbar{a} = \inf_{x\in\Rn}a(x)>0, \\
&	\bar{a} = \sup_{x\in\Rn} a(x)=1\no.
\end{align*}

From the definition of $J_{a,f}$, a straight forward computation yields
\be\lab{5-8-1}
J_{a,0}(v)=I_{a,0}\bigg(\bigg(\int_{\Rn}a(x)v_+^{p+1}{\rm d}x\bigg)^{-\frac{1}{p-1}}v\bigg)=\bigg(\frac{1}{2}-\frac{1}{p+1}\bigg)\bigg(\int_{\Rn}a(x)v_+^{p+1}{\rm d}x\bigg)^{-\frac{2}{p-1}}.
\ee
Thus,
$$ \bar{a}^{-\frac{2}{p-1}}J_{1,0}(v) \leq J_{\bar a,0}(v)\leq J_{a,0}(v)\leq J_{\underbar{a},0}(v)=\underbar{a}^{-\frac{2}{p-1}}J_{1,0}(v).$$
Further, as $$\max_{t\in[0,1]} I_{1,0}(tw^*)= I_{1,0}(w^*),$$  where $w^*$ is the unique (radial) ground state solution of \eqref{AT0.8}, we obtain
\be\label{AT1.11}
\bar{a}^{-\tfrac{2}{p-1}}I_{1,0}(w^*)\leq \inf_{v\in\tilde\Sigma_{+}}J_{a,0}(v)\leq \underbar{a}^{-\tfrac{2}{(p-1)}}I_{1,0}(w^*).
\ee

\medskip

\begin{lemma}\label{ATl1.2}
(i) Let $u \in \Hs$ and $\varepsilon \in (0, 1).$ Then there holds 
		\be\label{AT1.12}
		(1-\varepsilon)I_{\tfrac{a}{1-\varepsilon},\;0}(u) - \frac{1}{2\varepsilon}\|f\|_{H^{-s}(\Rn)}^2\leq \Iaf 
		\leq (1+\varepsilon)I_{\tfrac{a}{1+\varepsilon},\;0}(u) + \frac{1}{2\varepsilon}\|f\|_{H^{-s}(\Rn)}^2.
		\ee

(ii) For $v\in\tilde\Sigma_{+}$ and $\varepsilon \in (0, 1)$, there holds 
		\be\label{AT1.13}
		(1-\varepsilon)^{\tfrac{p+1}{p-1}}J_{a,0}(v) - \tfrac{1}{2\varepsilon}\|f\|_{H^{-s}(\Rn)}^2\leq J_{a,f}(v)\leq (1+\varepsilon)^{\tfrac{p+1}{p-1}}J_{a,0}(v) 
		+ \tfrac{1}{2\varepsilon}\|f\|_{H^(\Rn)}^2.
		\ee
(iii) In particular, there exists $d_0>0$ such that if $\|f\|_{H^{-s}(\Rn)}\leq d_0,$
		then, 
		$$\inf_{v\in\tilde\Si_{+}}J_{a,f}(v)>0.$$
\end{lemma}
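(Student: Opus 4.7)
For \textbf{part (i)}, my plan is to subtract the two quantities directly. Writing $I_{a,f}(u)$ and $(1\mp\varepsilon)I_{\frac{a}{1\mp\varepsilon},0}(u)$ from the definition \eqref{EF-1}, the $L^{p+1}$ nonlinear terms cancel exactly (because $(1\mp\varepsilon)\cdot\frac{1}{1\mp\varepsilon}=1$), producing
$$I_{a,f}(u) - (1\mp\varepsilon)I_{\frac{a}{1\mp\varepsilon},0}(u) = \pm\tfrac{\varepsilon}{2}\|u\|_{H^s(\Rn)}^{2} - \prescript{}{H^{-s}}{\langle}f,u{\rangle}_{H^s}.$$
Both inequalities of \eqref{AT1.12} then reduce to the weighted Young inequality
$$|\prescript{}{H^{-s}}{\langle}f,u{\rangle}_{H^s}| \leq \|f\|_{H^{-s}(\Rn)}\|u\|_{H^s(\Rn)} \leq \tfrac{\varepsilon}{2}\|u\|_{H^s(\Rn)}^{2}+\tfrac{1}{2\varepsilon}\|f\|_{H^{-s}(\Rn)}^{2},$$
absorbing the linear $f$-term into the quadratic one with the remaining $\pm\tfrac{\varepsilon}{2}\|u\|_{H^s}^{2}$ of matching sign.

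For \textbf{part (ii)}, I would apply (i) pointwise with $u=tv$ for each $t>0$ and take the supremum over $t>0$. Since the error term $\tfrac{1}{2\varepsilon}\|f\|_{H^{-s}(\Rn)}^{2}$ is independent of $t$, it commutes with the supremum, yielding
$$(1-\varepsilon)J_{\frac{a}{1-\varepsilon},0}(v) - \tfrac{1}{2\varepsilon}\|f\|_{H^{-s}(\Rn)}^{2} \leq J_{a,f}(v) \leq (1+\varepsilon)J_{\frac{a}{1+\varepsilon},0}(v) + \tfrac{1}{2\varepsilon}\|f\|_{H^{-s}(\Rn)}^{2}.$$
The explicit identity \eqref{5-8-1} gives $J_{\frac{a}{c},0}(v)=c^{2/(p-1)}J_{a,0}(v)$, so combined with the outer factor $c=1\pm\varepsilon$ the exponent becomes $1+\tfrac{2}{p-1}=\tfrac{p+1}{p-1}$, producing \eqref{AT1.13}.

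For \textbf{part (iii)}, the lower half of (ii) combined with \eqref{AT1.11} gives
$$\inf_{v\in\tilde\Sigma_{+}}J_{a,f}(v)\geq (1-\varepsilon)^{(p+1)/(p-1)}\,\bar a^{-2/(p-1)}\,I_{1,0}(w^*) - \tfrac{1}{2\varepsilon}\|f\|_{H^{-s}(\Rn)}^{2}.$$
Since $\bar a<\infty$ and $I_{1,0}(w^*)>0$, fixing any $\varepsilon\in(0,1)$ (for definiteness $\varepsilon=1/2$) makes the leading constant strictly positive, and one then picks $d_0>0$ small enough that $\tfrac{1}{2\varepsilon}d_0^{2}$ does not swallow it; any smaller $\|f\|_{H^{-s}(\Rn)}$ yields the claimed uniform positive lower bound. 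The whole argument is essentially routine; the one place requiring bookkeeping is the scaling identity in (ii) that turns $1+2/(p-1)$ into $(p+1)/(p-1)$, and the order in (iii)---fix $\varepsilon$ first, then choose $d_0$---so that the positive leading constant does not degenerate.
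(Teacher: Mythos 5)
Your proposal is correct and follows essentially the same route as the paper: Young's inequality with weight $\varepsilon$ to absorb the $f$-term for (i), taking the supremum over $t$ and invoking the scaling identity \eqref{5-8-1} to convert $(1\pm\varepsilon)J_{\frac{a}{1\pm\varepsilon},0}$ into $(1\pm\varepsilon)^{(p+1)/(p-1)}J_{a,0}$ for (ii), and substituting the lower bound \eqref{AT1.11} with a fixed $\varepsilon$ and small $d_0$ for (iii). Your explicit bookkeeping of the exponent and of the order of quantifiers in (iii) is exactly what the paper's terse proof leaves implicit.
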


\begin{proof}
Using Young inequality with $\eps>0$, we can write
 $$|\prescript{}{H^{-s}}{\langle}f,u{\rangle}_{H^s}|\leq \|f\|_{H^{-s}(\Rn)}\|u\|_{\Hs}\leq \frac{\eps}{2}\|u\|_{\Hs}^2+\frac{1}{2\eps}\|f\|_{H^{-s}(\Rn)}^2.$$
Applying the above inequality in the definition of $\Iaf$, we obtain (i). Using (i) in the definition of $J_{a,f}(v)$, we obtain
$$(1-\varepsilon)J_{\frac{a}{1-\eps},0}(v) - \tfrac{1}{2\varepsilon}\|f\|_{H^{-s}(\Rn)}^2\leq J_{a,f}(v)\leq (1+\varepsilon)J_{\frac{a}{1+\eps},0}(v)
		+ \tfrac{1}{2\varepsilon}\|f\|_{H^{-s}(\Rn)}^2.$$
Combining this with \eqref{5-8-1}, we get (ii).		
Finally, substituting \eqref{AT1.11} into (ii) yields (iii).		
\end{proof}

Next, for $v\in \tilde{\Si}_+$, we study properties of the function $\tilde g : [0,\;\infty) \rightarrow \R$ defined as \be\lab{9-8-1}\tilde g(t):= I_{a,f}(tv).\ee
	\begin{lemma}\label{ATl1.3}
(i)  For every $v\in\tilde\Sigma_{+}$, the function $\tilde g$ has at most two critical points in $[0,\;\infty)$.
		
		\medskip 
		
(ii) If $\|f\|_{H^{-s}(\Rn)}\leq d_0$ ($d_0$ is chosen as in Lemma \ref{ATl1.2}), then for any $v\in\tilde\Sigma_{+},$ there exists a unique $\taf>0$ such that 
		$$I_{a,f}\big(\taf v\big) = \Jaf,$$
		where $J_{a,f}$ is defined as in \eqref{5-8-3}.
		Moreover, $\taf>0$ satisfies,
		\be\label{AT1.14}
		\taf > \bigg( p\int_{\Rn}a(x)v_+^{p+1}{\rm d}x\bigg)^{-\tfrac{1}{p-1}}\geq \bigg(pS_{1}^{- \frac{(p+1)}{2}} \bigg)^{-\tfrac{1}{p-1}},
		\ee
		and furthermore
		
		\be\label{AT1.15}
		I''_{a,f}\big(\taf v\big)(v,\;v)<0.
		\ee
		
(iii)  If $\tilde g$ has a critical point different from $\taf$, then it lies in 
$\bigg[0,\;(1-\tfrac{1}{p})^{-1}\|f\|_{H^{-s}(\Rn)}\bigg].$
				
\end{lemma}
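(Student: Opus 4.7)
The whole statement concerns the scalar function $\tilde g(t) = I_{a,f}(tv)$ from \eqref{9-8-1}, so the plan is to compute it explicitly and read everything off from its one-variable behaviour. Using $\|v\|_{\Hs} = 1$ and $(tv)_+ = tv_+$ for $t \geq 0$, one finds
\[
\tilde g(t) = \frac{t^2}{2} - \frac{A}{p+1} t^{p+1} - Bt, \qquad \tilde g'(t) = t - At^p - B, \qquad \tilde g''(t) = 1 - pAt^{p-1},
\]
with $A := \int_{\Rn} a(x) v_+^{p+1}\,dx > 0$ (since $v_+ \not\equiv 0$) and $B := \prescript{}{H^{-s}}{\langle}f, v{\rangle}_{H^s}$. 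The second derivative changes sign only at $t_\star := (pA)^{-1/(p-1)}$, positive on $[0, t_\star)$ and negative on $(t_\star, \infty)$, so $\tilde g'$ is strictly increasing on $[0,t_\star]$ then strictly decreasing with $\tilde g'(t) \to -\infty$; hence $\tilde g'$ has at most two zeros in $[0, \infty)$, which is (i).

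For (ii), the crucial step is ruling out the degenerate regime $\tilde g'(t_\star) \leq 0$. A direct computation gives $\tilde g'(t_\star) = \frac{p-1}{p} t_\star - B$; if this were non-positive then $\tilde g'$ would stay non-positive on $[0, \infty)$, so $\tilde g$ would be non-increasing and $\sup_{t > 0} \tilde g(t) \leq \tilde g(0) = 0$, contradicting the bound $J_{a,f}(v) > 0$ supplied by Lemma \ref{ATl1.2}(iii) under $\|f\|_{H^{-s}(\Rn)} \leq d_0$. Therefore $\tilde g'(t_\star) > 0$, so $\tilde g'$ has exactly one zero $t_2 > t_\star$ together with at most one more $t_1 \in (0, t_\star)$ (which in fact exists iff $B > 0$). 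Because $\tilde g$ is decreasing on $(0, t_1)$, the candidate local minimum satisfies $\tilde g(t_1) < 0 < \tilde g(t_2)$, so $\taf := t_2$ is the unique maximizer on $(0, \infty)$; this gives the inequality $\taf > (pA)^{-1/(p-1)}$ of \eqref{AT1.14}, and \eqref{AT1.15} is just $\tilde g''(t_2) < 0$, immediate from $t_2 > t_\star$. The remaining inequality in \eqref{AT1.14} follows by plugging $A \leq \|v\|_{L^{p+1}(\Rn)}^{p+1} \leq S_1^{-(p+1)/2}$ (using $a \leq 1$, the definition of $S_1$ in \eqref{17-7-5} and $\|v\|_{\Hs} = 1$) into the monotonically decreasing map $A \mapsto (pA)^{-1/(p-1)}$.

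For (iii), if $B \leq 0$ then $\tilde g'(0) = -B \geq 0$ and the unimodality of $\tilde g'$ forces it to have only $t_2$ as positive zero, making (iii) vacuous. If $B > 0$, the other critical point $t_1 \in (0, t_\star)$ satisfies $\tilde g'(t_1) = 0$, equivalently $t_1(1 - At_1^{p-1}) = B$; the constraint $t_1 < t_\star$ gives $At_1^{p-1} < 1/p$, so $\frac{p-1}{p} t_1 < B \leq \|f\|_{H^{-s}(\Rn)} \|v\|_{\Hs} = \|f\|_{H^{-s}(\Rn)}$, i.e. $t_1 < (1 - \tfrac{1}{p})^{-1} \|f\|_{H^{-s}(\Rn)}$ as required. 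The one genuinely non-mechanical ingredient in the whole proof is using $J_{a,f}(v) > 0$ from Lemma \ref{ATl1.2}(iii) to eliminate the case $\tilde g'(t_\star) \leq 0$: this is precisely where the smallness $\|f\|_{H^{-s}(\Rn)} \leq d_0$ intervenes, and it is what forces the global maximiser to lie strictly past the inflection $t_\star$, thereby securing both \eqref{AT1.14} and \eqref{AT1.15} simultaneously.
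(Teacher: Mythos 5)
Your proof is correct and is exactly the fibering-map analysis that the paper omits by deferring to \cite[Lemma~1.3]{Adachi}: reduce to the one-variable function $\tilde g(t)=\tfrac{t^2}{2}-\tfrac{A}{p+1}t^{p+1}-Bt$, locate the single inflection $t_\star=(pA)^{-1/(p-1)}$ of $\tilde g$ (equivalently the maximum of $\tilde g'$), and use $J_{a,f}(v)>0$ from Lemma~\ref{ATl1.2}(iii) to rule out $\tilde g'(t_\star)\leq 0$. All the individual steps check out, including the use of $a\leq 1$ (valid under $({\bf A_1})$ in this section) for the second inequality in \eqref{AT1.14} and the bound $B\leq\|f\|_{H^{-s}(\Rn)}$ in part (iii).
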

This lemma can be proved exactly in the same spirit of \cite[Lemma 1.3]{Adachi}. We skip the details.
Now we prove the existence of first positive solution in the neighbourhood of $0.$
\subsection{Existence of first solution}
The following proposition provides existence of first positive solution.

\begin{proposition}\label{ATl1.4}
Let $d_0$ be as in Lemma \ref{ATl1.3}. Then there exists $r_1>0$ and $d_1\in (0,\;d_0]$ such that
	
(i) $\Iaf$ is strictly convex in $B(r_1) = \{u\in\Hs \; : \; \|u\|_{\Hs}<r_1\}$.
		
(ii)  If $\|f\|_{H^{-s}(\Rn)}\leq d_1,$ then
		$$
		\inf_{\|u\|_{\Hs}=1}\Iaf >0.
		$$
		Moreover, $I_{a,f}$ has a unique critical point $\ul$ in $B(r_1)$ and it satisfies, 
\be\lab{11-8-1}
		\ul\in B(r_1) \quad \mbox{and} \ I_{a,f}(\ul) =\inf_{u\in B(r_1)} \Iaf.
\ee
		i.e., $\ul$	is a positive solution to $\Iaf$ satisfying \eqref{11-8-1}.\end{proposition}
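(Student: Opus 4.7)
\emph{Overview.} Near the origin $\Iaf$ is a perturbation of the positive-definite quadratic form $\tfrac{1}{2}\|u\|_{\Hs}^2$ by a super-quadratic nonlinearity and the linear term $\prescript{}{H^{-s}}{\langle}f,u{\rangle}_{H^s}$. The plan is to exploit this: secure strict convexity on a small ball by a second-derivative computation, secure strict positivity on its boundary sphere by using smallness of $\|f\|_{H^{-s}(\Rn)}$, and then combine the two.

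\emph{Step 1 (Strict convexity on a small ball).} I would first compute
$$I_{a,f}''(u)(\phi,\phi) = \|\phi\|_{\Hs}^2 - p\int_{\Rn} a(x)\,u_+^{p-1}\phi^2\,dx.$$
By H\"older with exponents $\tfrac{p+1}{p-1}$ and $\tfrac{p+1}{2}$ together with the Sobolev embedding $\Hs\hookrightarrow L^{p+1}(\Rn)$,
$$\int_{\Rn} a\,u_+^{p-1}\phi^2\,dx \leq \|a\|_{L^\infty(\Rn)}\|u\|_{L^{p+1}(\Rn)}^{p-1}\|\phi\|_{L^{p+1}(\Rn)}^2 \leq K\|u\|_{\Hs}^{p-1}\|\phi\|_{\Hs}^2,$$
for some $K=K(N,s,p,\|a\|_{L^\infty(\Rn)})>0$. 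Choosing $r_1>0$ with $pKr_1^{p-1}\leq\tfrac{1}{2}$ gives $I_{a,f}''(u)(\phi,\phi)\geq \tfrac12\|\phi\|_{\Hs}^2$ for every $u\in B(r_1)$ and every $\phi\in\Hs$, yielding part (i).

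\emph{Step 2 (Lower bound on the sphere).} For $u$ with $\|u\|_{\Hs}=r_1$, the same Sobolev estimate and the dual bound $|\prescript{}{H^{-s}}{\langle}f,u{\rangle}_{H^s}|\leq\|f\|_{H^{-s}(\Rn)}\|u\|_{\Hs}$ give
$$I_{a,f}(u) \geq \tfrac{1}{2}r_1^2 - \tfrac{K'}{p+1}r_1^{p+1} - \|f\|_{H^{-s}(\Rn)}\,r_1.$$
Shrinking $r_1$ further so that $\tfrac{K'}{p+1}r_1^{p-1}\leq\tfrac{1}{4}$ (which does not destroy Step 1), the first two terms are bounded below by $\tfrac14 r_1^2$. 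Setting $d_1:=\min\{d_0,\,r_1/8\}$ then forces $I_{a,f}(u)\geq r_1^2/8>0$ on the sphere $\{\|u\|_{\Hs}=r_1\}$ whenever $\|f\|_{H^{-s}(\Rn)}\leq d_1$.

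\emph{Step 3 (Existence, uniqueness and positivity of $\ul$).} Because $f\not\equiv 0$ is a nonnegative functional there exists $v\in\Hs$ with $\prescript{}{H^{-s}}{\langle}f,v{\rangle}_{H^s}>0$; for $t>0$ small one then has $I_{a,f}(tv)=-t\prescript{}{H^{-s}}{\langle}f,v{\rangle}_{H^s}+O(t^2)<0$. Hence $\inf_{\overline{B(r_1)}}I_{a,f}<0$, and by Step 2 this infimum is attained at some interior point, which we denote by $\ul$. The strict convexity from Step 1 forces $\ul$ to be the unique critical point of $I_{a,f}$ in $B(r_1)$. It is a weak solution of \eqref{MAT2}, so by Remark \ref{r:30-7-3} combined with the strong maximum principle of \cite{DPQ} it is in fact a positive solution of \eqref{MAT1}.

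\emph{Main obstacle.} Nothing here is deep, but the argument rests on a compatible choice of three small parameters: an $r_1$ making the Hessian strictly positive, the same $r_1$ (possibly shrunk further) making the super-quadratic contribution on the sphere negligible relative to the quadratic barrier, and finally a $d_1\ll r_1$ so that the linear perturbation by $f$ cannot destroy that barrier. Getting this bookkeeping right is the only place where the hypothesis $\|f\|_{H^{-s}(\Rn)}\leq d_1$ is genuinely used; after that, existence and uniqueness are immediate from convexity plus coercivity, and positivity is standard.
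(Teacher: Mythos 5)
Your proof is correct and follows essentially the same route as the paper: the same Hessian estimate via H\"older and the Sobolev embedding to get strict convexity on a small ball, the same boundary lower bound $\tfrac12 r_1^2 - Cr_1^{p+1} - r_1\|f\|_{H^{-s}(\Rn)}$ on the sphere, and the same convexity-plus-barrier argument (with Remark \ref{r:30-7-3} for positivity) to produce the unique interior critical point $\ul$. The only differences are cosmetic bookkeeping — the paper fixes $r_1^{p-1}=S_1^{(p+1)/2}/p$ exactly while you shrink $r_1$ and take $d_1=\min\{d_0,\,r_1/8\}$, and you show $\inf I_{a,f}<0$ by perturbing along a direction where $f$ acts positively rather than by observing that $0$ is not a critical point of the strictly convex functional.
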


\begin{proof}
	We begin the proof of part $(i).$ 
	\begin{equation}\label{esti-1}
	I''_{a,f}(u)(h,h) =  \|h\|_{\Hs}^2 -p\int_{\Rn} a(x)u_+^{p-1}h^2 \, {\rm d}x
	\end{equation}
	
Since $a\leq 1$, using H\"{o}lder inequality and Sobolev inequality, we estimate the second term on the RHS as follows 
	
	\begin{eqnarray*}
	\int_{\Rn} a(x)u_+^{p-1}h^2 \, {\rm d}x \leq \left( \int_{\Rn} |u|^{p+1} \, {\rm d}x\right)^{\frac{p-1}{p+1}} 
	\left(\int_{\Rn} |h|^{p+1} \, {\rm d}x \right)^{\frac{2}{p+1}} &\leq & S_1^{-\frac{p-1}{2}} S_1^{-1}\|u\|^{p-1}_{\Hs} \, \|h\|^{2}_{\Hs} \\
	&=& S_1^{-\frac{p+1}{2}} \, \|u\|^{p-1}_{\Hs} \, \|h\|^{2}_{\Hs}.
	\end{eqnarray*}		
	Thus substituting the above in \eqref{esti-1} we obtain 
	
	$$
	I''_{a,f}(u)(h,\;h) \geq \left( 1 - pS_1^{-\frac{p+1}{2}} \|u\|^{p-1}_{\Hs} \right) \, \|h\|^2_{\Hs}.
	$$	
Therefore, 		
	$I''_{a,\;f}(u)$ is positive definite for $u\in B(r_1)$, with 
	$r_1 =p^{-\frac{1}{p-1}}S_1^\frac{p+1}{2(p-1)}$ and hence $\Iaf$ is strictly convex in $B(r_1).$ This completes the proof of part $(i)$.
	
	\medskip
	
	$(ii)$ Let $\|u\|_{\Hs} = r_1$, then we have
	\bea
	\Iaf = \tfrac{1}{2}\|u\|_{\Hs}^2 - \tfrac{1}{p+1}\int_{\Rn}a(x)u_+^{p+1}\; {\rm d}x-  \prescript{}{H^{-s}}{\langle}f, u{\rangle}_{H^s} &\geq& \tfrac{1}{2}r_1^2-\tfrac{1}{p+1} S_1^{-\frac{p+1}{2}}  r_1^{p+1}-r_1\|f\|_{H^{-s}(\Rn)}\no\\
	&=&\bigg(\frac{1}{2}-\frac{1}{p+1} S_1^{-\frac{p+1}{2}}  r_1^{p-1}\bigg) r_1^2 - r_1\|f\|_{H^{-s}(\Rn)}\no 
	\eea
	
Since, $r_1^{p-1} \, = \, \frac{1}{p} S_1^{\frac{p+1}{2}}$, we obtain	
	$$
	\Iaf \geq \bigg(\frac{1}{2} -\frac{1}{p(p+1)}\bigg)r_1^2-r_1\|f\|_{H^{-s}(\Rn)}.
	$$

	Thus there exists $d_1\in (0,\;d_0]$ such that
	$$
	\inf_{\|u\|_{\Hs} \, = \, r_1}\Iaf >0, \quad  \text{for } \  0<\|f\|_{H^{-s}(\Rn)}\leq d_1.
	$$
Since $\Iaf$ is strictly convex in $B(r_1)$ and $\inf_{\|u\|_{\Hs} \, = \, r_1}\Iaf  >0=I_{a,f}(0)$,  there exists a unique critical point $\ul$ of $I_{a,f}$ in $B(r_1)$ and it satisfies
	\be\lab{2-9-1}
	I_{a,f}(\ul) = \inf_{\|u\|_{\Hs}<r_1}\Iaf<I_{a,f}(0)=0,
	\ee
where the last inequality is due to strict convexity of $I_{a,f}$ in $B(r_1)$.	
	Combining this with Remark \ref{r:30-7-3}, we conclude the proof of the proposition.
\end{proof}

The next proposition characterises all the critical points of $I_{a, f}$ in terms of the functional $J_{a, f}.$ 
\begin{proposition}\label{ATP1.7}
	Let $d_2:=\min\{d_1,\;(1-\tfrac{1}{p})r_1\}>0$, where $d_1$, $r_1$ be as in Proposition \ref{ATl1.4} and suppose that $0<\|f\|_{H^{-s}(\Rn)}\leq d_2$. Then,
	
(i)  $J_{a, f} \in C^1(\tilde\Si_{+},\;\R)$ and
		\begin{equation}\label{AT1.18}
		J'_{a,f}(v)h = \taf I'_{a,f} \big(\taf v\big)h
		\end{equation}
		for all $h\in T_{v}\tilde\Si_{+} = \{h\in\Hs\;|\; \langle h,\;v\rangle_{\Hs} =0\}.$
		
(ii) $ v\in\tilde\Si_{+}$ is a critical point of $\Jaf$ iff $\taf v\in\Hs$ is a critical point of $\Iaf.$
		
(iii)  Moreover, the set of all critical points of $\Iaf$ can be written as 
		\begin{equation}\label{AT1.19}
		\big\{ \taf v\;|\; v\in\tilde\Si_{+},\;J'_{a,f}(v) = 0\big\}\cup \big\{\ul \big\}
\end{equation}		
\end{proposition}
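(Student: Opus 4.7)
For part (i), the plan is to apply the implicit function theorem to $F\colon\tilde\Si_+\times(0,\infty)\to\R$ given by $F(v,t):=I'_{a,f}(tv)v$. By Lemma \ref{ATl1.3}(ii) we have $F(v,t_{a,f}(v))=0$, and the non-degeneracy hypothesis $\partial_tF(v,t_{a,f}(v))=I''_{a,f}(t_{a,f}(v)v)(v,v)<0$ is exactly \eqref{AT1.15}. This yields $t_{a,f}\in C^1(\tilde\Si_+)$, hence $J_{a,f}\in C^1(\tilde\Si_+)$. Formula \eqref{AT1.18} then follows by differentiating $J_{a,f}(v)=I_{a,f}(t_{a,f}(v)v)$ via the chain rule: the contribution of $t_{a,f}'(v)[h]$ becomes a scalar multiple of $I'_{a,f}(t_{a,f}(v)v)v$, which vanishes since $t_{a,f}(v)$ is a critical point of $t\mapsto I_{a,f}(tv)$.

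Part (ii) is immediate from (i) together with the orthogonal decomposition $H^s(\R^N)=T_v\tilde\Si_+\oplus\R v$. If $J'_{a,f}(v)=0$, then by \eqref{AT1.18} and $t_{a,f}(v)>0$ the differential $I'_{a,f}(t_{a,f}(v)v)$ vanishes on $T_v\tilde\Si_+$; it also vanishes on $\R v$ by the defining property of $t_{a,f}(v)$, so it vanishes on the whole space. The converse is trivial.

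For part (iii), let $u$ be a critical point of $I_{a,f}$. Testing the equation $I'_{a,f}(u)=0$ against $u_-$ as in Remark \ref{r:30-7-3} gives $u\ge 0$, and $u\equiv 0$ is ruled out because $I'_{a,f}(0)=-f\not\equiv 0$; in particular $u_+\not\equiv 0$, so $v:=u/\|u\|_{H^s(\R^N)}\in\tilde\Si_+$. Write $s:=\|u\|_{H^s(\R^N)}$. If $s<r_1$, the uniqueness statement in Proposition \ref{ATl1.4}(ii) forces $u=u_{locmin}(a,f;x)$. If $s\ge r_1$, then $s$ is a critical point of $\tilde g(t):=I_{a,f}(tv)$ because $I'_{a,f}(sv)v=0$, and Lemma \ref{ATl1.3}(iii) gives either $s=t_{a,f}(v)$ (in which case (ii) places $u$ in the first set of \eqref{AT1.19}) or $s\le(1-\tfrac{1}{p})^{-1}\|f\|_{H^{-s}(\R^N)}\le r_1$, the last step using $d_2\le(1-\tfrac{1}{p})r_1$. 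The remaining borderline $s=r_1$ is excluded by sharpening the estimate in Lemma \ref{ATl1.3}(iii) to a strict inequality, which is available whenever $\|f\|_{H^{-s}(\R^N)}>0$ since $t_{a,f}(v)$ is a strict local maximum of $\tilde g$. The reverse inclusion in \eqref{AT1.19} is immediate from (ii) and Proposition \ref{ATl1.4}.

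The main obstacle I anticipate is the careful handling of the borderline $s=r_1$ in part (iii), which requires propagating the strict convexity estimates of Proposition \ref{ATl1.4} to a strict version of Lemma \ref{ATl1.3}(iii). Once that is in hand, parts (i) and (ii) reduce to a routine implicit-function argument together with the linear-algebraic splitting of $H^s(\R^N)$ along the direction $v$.
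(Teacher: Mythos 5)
Your proposal is correct and follows essentially the same route as the paper: the implicit function theorem applied to $(t,v)\mapsto I'_{a,f}(tv)v$ with the non-degeneracy \eqref{AT1.15} for (i), the splitting $H^s(\R^N)=\mathrm{Span}\{v\}\oplus T_v\tilde\Si_+$ for (ii), and the dichotomy of Lemma \ref{ATl1.3}(iii) together with the uniqueness of the critical point in $B(r_1)$ for (iii). Your extra care at the borderline $\|u\|_{H^s}=r_1$ (which the paper silently absorbs into the open ball $B(r_1)$) is sound, since the strictness of \eqref{AT1.15} separates the two critical points of $\tilde g$ and does yield the strict bound.
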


\medskip
	\begin{proof}
(i) Let $\tilde g$ be as defined in \eqref{9-8-1}. Then, from Lemma \ref{ATl1.3}, we have  $$\tilde{g}'(t_{a,f}(v))= I'_{a,f}\big(t_{a,f}(v)v\big)(v) =0\quad\text{and}\quad I''_{a,f}\big(\taf v\big)(v,\;v)<0.$$
i.e., $\frac{d^2}{dt^2}\bigg{|}_{t=\taf}I_{a,\;f}(tv)<0.$
Therefore, by implicit function theorem (applying implicit function theorem on the function, $\tilde{F}:(0,\infty)\times \tilde\Si_{+}\to \R,\;\tilde{F}(t,v) = I'_{a,f}(tv)(v)$ which is of class $C^1$),  we can see that $\taf \in C^1(\tilde\Si_{+},\;[0,\;\infty)).$ Consequently, $\Jaf = I_{a,f}(\taf v)\in C^1(\tilde\Si_{+},\;\R)$.\\
Further, as
		\be\label{AT1.20}
		I'_{a,f}(\taf v)(v) = 0,
		\ee
for $h\in T_v\tilde\Si_{+}:=\{h\in\Hs\;|\;\langle h,\;v\rangle_{\Hs} =0\}$, we have
		\bea
		J'_{a,f}(v)h &=& I'_{a,f}(\taf v)\bigg(\taf h+ \langle t'_{a,f}(v),\;h\rangle_{\Hs} v\bigg)\no\\
		&=& \taf I'_{a,f}(\taf v)h+ \langle t'_{a,f}(v),\;h\rangle_{\Hs} I'_{a,f}(t_{a,f}(v)v)(v)\no\\
		&=&\taf I'_{a,f}(\taf v)(h).\no
		\eea
Hence (i) follows.

\vspace{2mm}
		
(ii) Applying $(i)$, we have 
$J'_{a,f}(v)=0$ if and only if \be\lab{10-8-1}I'_{a,f}(\taf v)h = 0\quad\forall\; h\in T_v\tilde\Si_{+}.\ee 
Since,  $$\Hs = \mbox{Span} \{v\} \oplus  T_v\tilde\Si_{+},$$
combining \eqref{AT1.20} and \eqref{10-8-1}, (ii) follows.

		\medskip
		
(iii) Suppose that $u\in\Hs$ is a critical point of $I_{a, f}$. Writting $u=tv$ with $v\in \tilde\Si_{+}$ and $t\geq 0$. By lemma \ref{ATl1.3}, we have either $t=\taf$ or $t\leq (1-\tfrac{1}{p})^{-1}\|f\|_{H^{-s}(\Rn)}.$\\
		Thus either $u\in\Hs$ corresponds to a critical point of $J_{a,f}$
		or, $\|u\|_{\Hs} = t\|v\|_{\Hs}=t\leq (1-\tfrac{1}{p})^{-1}d_2\leq r_1$
		By Proposition \ref{ATl1.4}, $\Iaf$ has a unique critical point in $B(r_1)$ and it is $\ul$. Hence the set of all critical points of $\Jaf$ is precisely \eqref{AT1.19}.
	\end{proof}

\medskip

Next we study the Palais-Smale condition for $\Jaf$. 

\begin{proposition}\label{ATP1.9}
	Suppose $0<\|f\|_{H^{-s}(\Rn)}\leq d_2,$ where $d_2>0$ is as found in Proposition \ref{ATP1.7}. Then,
	
(i) $J_{a,f}(v_j)\to\infty$ whenever $dist_{\Hs}(v_j,\;\partial\tilde\Si_{+}) \overset{j}\to 0$, where 
$$dist_{\Hs}(v_j,\;\partial\tilde\Si_{+}):= \inf\{\|v_j-u\|_{\Hs}\;:\;u\in\Sigma,\;u_+\equiv 0\}.$$

(ii) Suppose that $\{v_j\}_{j=1}^{\infty}\subset \tilde\Si_{+}$ satisfies as $j\to\infty$
\be \lab{10-8-3}
	J_{a,f}(v_j)\to c, \quad\text{for some }c>0,\ee
\be\lab{10-8-4}\|J'_{a,f}(v_j)\|_{T_v^*\tilde\Si_{+}} \equiv\sup\{J'_{a,f}(v_j)h : \;h\in T_{v_j}\tilde\Si_{+},\;\|h\|_{\Hs}=1\}\To 0.\ee
	Then there exists a subsequence- still we denote by $\{v_j\}$, a critical point $u_0(x)\in\Hs$ of $\Iaf$, an integer $l\in\mathbb{N}\cup\{0\}$ and $l$ sequences of points $\{y_j^{(1)}\},\ldots,\;\{y_j^{(l)} \}\subset \Rn$, critical points $w_k\in\Hs$ \textup{($k=1,2,\cdots l$)} of \eqref{AT0.8} such that 
	
	\medskip
	
	\begin{enumerate}
	
	\item $\;|y_j^k|\to\infty$ as $j\to\infty$, for all $k=1,\;2,\ldots,\;l$.
	
	\medskip
	
	\item $\;|y_j^{(k)}-y_j^{(k')}|\to\infty$ as $j\to\infty$ for $k\neq k'$.
	
	\medskip
	
	\item $\; \bigg{\|}v_j(x)- \frac{u_0(x)+\sum_{k=1}^{l}w_k(x-y_j^k)}{\|u_0(x)+\sum_{k=1}^{l}w_k(x-y_j^k)\|_{\Hs}}\bigg{\|}_{\Hs}\to 0$ as $j\to\infty.$
	
 \medskip	
	
\item $ J_{a,f}(v_j)\to I_{a,f}(u_0) + \sum_{k=1}^{l}I_{1,0}(w_k)$ as $j\to\infty$.
\end{enumerate}
	
\end{proposition}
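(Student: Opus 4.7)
Part $(i)$ is soft: if $\mathrm{dist}_{\Hs}(v_j, \partial\tilde\Sigma_+) \to 0$, pick $u_j \in \Sigma$ with $(u_j)_+ \equiv 0$ and $\|v_j - u_j\|_{\Hs} \to 0$. Sobolev embedding gives $(v_j)_+ \to (u_j)_+ = 0$ in $L^{p+1}(\Rn)$, hence $\int_{\Rn} a(v_j)_+^{p+1}\,{\rm d}x \to 0$. The explicit identity \eqref{5-8-1} then forces $J_{a,0}(v_j) \to \infty$, and the lower bound in Lemma~\ref{ATl1.2}(ii) (with, say, $\varepsilon = 1/2$) upgrades this to $J_{a,f}(v_j) \to \infty$.

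For part $(ii)$, the plan is to transplant the Palais--Smale information from the manifold $\tilde\Sigma_+$ to the full space $\Hs$ via the map $v \mapsto t_{a,f}(v)v$, apply a decomposition theorem, and then renormalize. Set $u_j := t_{a,f}(v_j) v_j$. I will verify that $\{u_j\}$ is a bounded Palais--Smale sequence for $I_{a,f}$ at level $c$: by construction $I_{a,f}(u_j) = J_{a,f}(v_j) \to c$; and for arbitrary $w \in \Hs$ with $\|w\|_{\Hs}=1$, the orthogonal decomposition $w = \langle w, v_j\rangle_{\Hs} v_j + h$ with $h \in T_{v_j}\tilde\Sigma_+$ and $\|h\|_{\Hs} \leq 1$, combined with $I'_{a,f}(u_j)v_j = 0$ from \eqref{AT1.20}, the identity $I'_{a,f}(u_j) h = t_{a,f}(v_j)^{-1} J'_{a,f}(v_j)h$ from \eqref{AT1.18}, the uniform lower bound \eqref{AT1.14} on $t_{a,f}$, and the hypothesis \eqref{10-8-4}, yields $\|I'_{a,f}(u_j)\|_{H^{-s}(\Rn)} \to 0$. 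Boundedness of $\{u_j\}$ is then standard, via the estimate of $I_{a,f}(u_j) - \frac{1}{p+1}I'_{a,f}(u_j)u_j$, exactly as in Step~1 of the proof of Proposition~\ref{PSP}.

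With $\{u_j\}$ a bounded Palais--Smale sequence in hand, I apply the direct analogue of Proposition~\ref{PSP} for $I_{a,f}$ (the proof is identical after replacing $|u|^{p-1}u$ by $u_+^p$; Remark~\ref{r:30-7-3} guarantees every limiting profile is nonnegative, so each $w_k$ is a positive solution of \eqref{AT0.8}) to extract, along a subsequence, $u_0 \in \Hs$, an integer $l \geq 0$, sequences $\{y_j^{(k)}\}_{k=1}^{l}$, and profiles $w_k$ satisfying conclusions $(1)$, $(2)$, $(4)$ together with $u_j - u_0 - \sum_{k=1}^l w_k(\cdot - y_j^{(k)}) \to 0$ in $\Hs$. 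Writing $U_j := u_0 + \sum_{k=1}^l w_k(\cdot - y_j^{(k)})$, the uniform lower bound $\|u_j\|_{\Hs} = t_{a,f}(v_j) \geq (p S_1^{-(p+1)/2})^{-1/(p-1)}$ from \eqref{AT1.14} rules out $u_0 \equiv 0 \equiv w_1 \equiv \cdots \equiv w_l$, so $\|U_j\|_{\Hs}$ is bounded away from $0$ for large $j$. A routine triangle-inequality computation using $\|U_j\|_{\Hs} - \|u_j\|_{\Hs} \to 0$ then delivers conclusion $(3)$.

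The principal obstacle is the uniform Palais--Smale decay in step $(ii)$: because the tangent space $T_{v_j}\tilde\Sigma_+$ moves with $j$, one only has decay of $J'_{a,f}(v_j)$ in a $j$-dependent dual norm, and the uniform lower bound \eqref{AT1.14} on $t_{a,f}$ is precisely what decouples the $v_j$-direction (where the derivative vanishes automatically by \eqref{AT1.20}) from tangential directions, allowing one to conclude global $H^{-s}(\Rn)$ decay of $I'_{a,f}(u_j)$.
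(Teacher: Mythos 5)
Your proposal is correct and follows essentially the same route as the paper: part (i) via the lower bound of Lemma \ref{ATl1.2}(ii) together with the explicit formula \eqref{5-8-1}, and part (ii) by transplanting the Palais--Smale information to $\Hs$ through $u_j = t_{a,f}(v_j)v_j$ using \eqref{AT1.20}, \eqref{AT1.18} and the uniform lower bound \eqref{AT1.14}, then invoking the decomposition of Proposition \ref{PSP}. Your write-up is in fact more careful than the paper's at two points it leaves implicit -- the splitting $\Hs=\Span\{v_j\}\oplus T_{v_j}\tilde\Si_+$ needed to pass from the tangential dual norm to the full $H^{-s}$ norm, and the renormalization yielding conclusion (3) -- but these are elaborations, not a different argument.
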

\begin{proof}
(i) Using \eqref{AT1.13} and \eqref{5-8-1},  for any $\varepsilon \in (0,\;1)$, we have
	\bea
	J_{a,f}(v_j)&\geq& (1-\varepsilon)^{\tfrac{p+1}{p-1}}J_{a,0}(v_j)-\frac{1}{2\varepsilon}\fhs^2 \no\\
	&\geq& (1-\varepsilon)^{\tfrac{p+1}{p-1}}(\tfrac{1}{2}-\tfrac{1}{p+1})\bigg(\int_{\Rn}a(x)v_{j{+}}^{p+1}\;{\rm d}x\bigg)^{-\tfrac{2}{p-1}}-\frac{1}{2\varepsilon}\fhs^2\no
	\eea
	Since, $dist(v_j,\;\partial\tilde\Si_{+})\to 0$ implies
	$(v_j)_{+}\to 0 \;\text{in }\Hs$. Therefore,  
	$(v_j)_{+}\to 0 \;\text{in }L^{p+1}(\Rn)$.
Consequently,
$$\bigg{|}\int_{\Rn}a(x)(v_j)_{+}^{p+1}\;{\rm d}x\bigg{|}\leq \|a\|_{L^{\infty}(\Rn)}\int_{\Rn}(v_j)_+^{p+1}\;{\rm d}x\to 0 \;\text{as}\;j\to\infty.$$
Therefore, $$J_{a,f}(v_j)\To\infty \quad\text{as }\quad dist_{\Hs}(v_j,\;\partial \tilde\Si_{+})\To 0.$$ Hence (i) follows.

(ii) From \eqref{AT1.14} and \eqref{AT1.18} we have,
	\bea
	\big{\|}I'_{a,f}\big(t_{a,f}(v_j)v_j\big)\big{\|}_{\hms} &=& \frac{1}{t_{a,f}(v_j)}\|J'_{a,f}(v_j)\|_{T^*_{v_j}\tilde\Si_{+}}\no\\
	&\leq& \bigg(pS_{1}^{-\tfrac{p+1}{2}}\bigg)^{\tfrac{1}{p-1}}\|J'_{a,f}(v_j)\|_{T^*_{v_j}\tilde\Si_{+}}\overset{j}\To 0\no 		
	\eea
	We also have, $I_{a,f}(t_{a,f}(v_j)v_j) = J_{a,f}(v_j)\to c$ as $j\to\infty$.
	Applying Palais-Smale result for $\Iaf$ (Proposition \ref{PSP} 
	), we conclude (ii).
\end{proof}
As a consequence to the above Proposition \ref{ATP1.9}, we have,
\begin{corollary}\label{corr-ps}
	Suppose that $0<\fhs\leq d_2$, where $d_2>0$ is as found in Proposition \ref{ATP1.7}. Then $J_{a,f}$ satisfies $(PS)_c$ at level $$c < I_{a,f}\big(\ul\big)+I_{1,0}(w^*),$$
	where $w^*$ is the unique ground state solution of \eqref{AT0.8}. 
\end{corollary}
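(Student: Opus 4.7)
The plan is as follows. Let $\{v_j\} \subset \tilde\Sigma_{+}$ be a Palais--Smale sequence for $J_{a,f}$ at level $c$ satisfying $c < I_{a,f}(\ul) + I_{1,0}(w^*)$. I first invoke Proposition~\ref{ATP1.9}(ii): along a subsequence there exist a critical point $u_0 \in \Hs$ of $I_{a,f}$, an integer $l \geq 0$, sequences $\{y_j^{(k)}\} \subset \Rn$ and critical points $w_k$ of \eqref{AT0.8} for $k = 1, \dots, l$ such that
\begin{equation*}
\bigg\| v_j - \frac{u_0 + \sum_{k=1}^{l} w_k(\cdot - y_j^{(k)})}{\| u_0 + \sum_{k=1}^{l} w_k(\cdot - y_j^{(k)}) \|_{\Hs}} \bigg\|_{\Hs} \to 0, \qquad J_{a,f}(v_j) \to I_{a,f}(u_0) + \sum_{k=1}^{l} I_{1,0}(w_k).
\end{equation*}
The objective is to show $l = 0$. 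Once this is achieved, the first display yields strong convergence $v_j \to u_0/\|u_0\|_{\Hs}$ in $\Hs$; the limit is nonzero because otherwise the energy identity would force $c = I_{a,f}(0) = 0$, contradicting $c \geq \inf_{\tilde\Sigma_{+}} J_{a,f} > 0$ from Lemma~\ref{ATl1.2}(iii). Thus the $(PS)_{c}$ condition holds.

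To rule out $l \geq 1$, I argue by contradiction. Each $w_k$ is a nontrivial solution of \eqref{AT0.8}, and by the ground-state characterization recalled in Remark~\ref{r:31-7-1} one has $I_{1,0}(w_k) \geq I_{1,0}(w^*) = S^{\infty}$. Combined with the energy identity this gives
\begin{equation*}
c = I_{a,f}(u_0) + \sum_{k=1}^{l} I_{1,0}(w_k) \geq I_{a,f}(u_0) + I_{1,0}(w^*).
\end{equation*}

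The crucial remaining step is to compare $I_{a,f}(u_0)$ with $I_{a,f}(\ul)$. For this I invoke the classification of critical points from Proposition~\ref{ATP1.7}(iii): either $u_0 = \ul$, so the comparison is trivial; or $u_0 = t_{a,f}(v)\,v$ for some $v \in \tilde\Sigma_{+}$ with $J'_{a,f}(v) = 0$, in which case $I_{a,f}(u_0) = J_{a,f}(v) \geq \inf_{\tilde\Sigma_{+}} J_{a,f} > 0$ by Lemma~\ref{ATl1.2}(iii), whereas from \eqref{2-9-1} in the proof of Proposition~\ref{ATl1.4} one has $I_{a,f}(\ul) < 0$. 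In either case $I_{a,f}(u_0) \geq I_{a,f}(\ul)$, and substituting into the previous display yields $c \geq I_{a,f}(\ul) + I_{1,0}(w^*)$, contradicting the hypothesis on $c$. Hence $l = 0$ and the corollary follows.

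The main obstacle is the comparison $I_{a,f}(u_0) \geq I_{a,f}(\ul)$ for an arbitrary critical point $u_0$. This rests on the strict sign separation between the local minimum $\ul$ (negative energy) and the mountain-pass family of critical points generated by the modified functional $J_{a,f}$ (strictly positive energy), which is the payoff of the construction of Section~4 so far. Once that separation is in hand, the argument is a standard application of the concentration-compactness bookkeeping encoded in Proposition~\ref{ATP1.9}.
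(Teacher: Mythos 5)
Your proposal is correct and follows essentially the same route as the paper: both arguments rest on the Palais--Smale decomposition of Proposition \ref{ATP1.9}(ii), the lower bound $I_{1,0}(w_k)\geq I_{1,0}(w^*)=S^\infty$ from Remark \ref{r:31-7-1}, and the identification of $I_{a,f}(\ul)$ as the minimal critical value via the classification \eqref{AT1.19} together with $J_{a,f}>0$ on $\tilde\Sigma_+$ and $I_{a,f}(\ul)<0$. The only cosmetic difference is that the paper phrases the conclusion as locating the lowest level of breakdown, while you run the same inequalities as a direct contradiction to force $l=0$.
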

Here we say that $\Jaf$ satisfies $(PS)_c$ if and only if for any sequence $\{v_j\}\subseteq \tilde\Si_{+}$ satisfying \eqref{10-8-3} and \eqref{10-8-4}
has a strongly convergent subsequence in $\Hs$.

\begin{proof}
By \eqref{2-9-1},
\be\lab{14-8-1}I_{a,f}(\ul) <0.\ee
On the other hand, from \eqref{AT1.19} we see that apart from $\ul$, all critical points of $I_{a,f}$ corresponds to a critical point $J_{a,f}$. So, if $u_1$ is a critical point of $I_{a,f}$, there exists $v_1\in\tilde{\Si}_{+}$ such that $I_{a,f}(u_1) = J_{a,f}(v_1) >0$ (here we have used $(iii)$ of Lemma \ref{ATl1.2}). Hence, 
	\be\lab{12-8-1}I_{a,f}\big(\ul\big) = \inf\bigg\{I_{a,f}(u_0)\;\bigg{|}\;u_0\in\Hs\;\text{is a critical point of  }I_{a,f}\bigg\}.\ee
Consequently, $I_{a,f}\big(\ul\big)+I_{1,0}(w^*)\leq I_{a,f}(u_0)+\sum_{i=1}^{l}I_{1,0}(w_i)$, for any critical point $u_0$ of $I_{a,f}$ and $l\geq 1$, where $w^*$ is the unique positive ground state solution of \eqref{AT0.8} and $w_i$ are positive solutions of \eqref{AT0.8} . From Proposition \ref{ATP1.9}, we know that if PS sequence for $J_{a,f}$ breaks down at level $c$, then $c$ must be of the form $I_{a,f}(u_0)+\sum_{i=1}^{l}I_{1,0}(w_i)$, where $u_0$ is any critical point of $I_{a,f}$ and $l\in \N\cup\{0\}$. Thus, if $l=0$ and $u_0=\ul$, then applying \eqref{14-8-1} to the Proposition \ref{ATP1.9}(ii)(4), we have $\lim_{j\to\infty} J_{a,f}(v_j)=I_{a,f}(\ul)< 0$. On the other hand, from Lemma \ref{ATl1.2}(iii) we have $\lim_{j\to\infty} J_{a,f}(v_j)>0$, which gives a contradiction. Therefore,  $l=0$ and $u_0=\ul$ can not happen together. Now, if $l=0$ and $u_0\neq \ul$, then from Proposition~\ref{ATP1.9}(ii)(3), it follows $v_j \rightarrow \frac{u_0}{||u_0||}$ in $\Hs$. Hence 
the Palais-Smale condition at level $c$ is satisfied.   
Thus the lowest level of breaking down of $(PS)_c$ is $I_{a,f}(\ul)+I_{1,0}(w^*)$. Hence the corollary follows.

\end{proof}

\medskip 

\subsection{Existence of second and third solution} In this subsection, our main aim is to show the existence of second and third positive solution. To this aim we shall use Lusternik-Schnirelman Category theory and a careful analysis of Palais-Smale sequence to prove multiplicity result. We use the following notation. 

\be\lab{12-8-5}
\[J_{a,f}\leq c\] =\{v\in\tilde\Sigma_{+}\;|\;\Jaf\leq c\},
\ee
for $c\in\R$. As explained before in order to find the critical points of $\Jaf$, we show for a sufficiently small $\varepsilon>0$,
\be\label{AT2.1}
{\rm cat} \bigg(\[J_{a,f}\leq I_{a,f}(\ul)+I_{1,\;0}(w^*)-\varepsilon\] \bigg)\geq 2,
\ee
where cat denotes Lusternik-Schnirelman Category.

\medskip 

Now we prove a very delicate energy estimate which plays a pivotal role in the proof 
of existence of critical points. 

\begin{proposition}\label{main-energy-estimate}
	Let $a$ be as in Theorem \ref{MT} and $f$ be a nonnegative nontrivial functional in $H^{-s}(\Rn)$ with $\fhs\leq d_2$, where $d_2>0$ is as found in Proposition \ref{ATP1.7}. Then there exists $R_0 >0$ such that
	
	\be\label{main-AT2.2}
	I_{a,f}(\ul+tw^*(x-y)) <I_{a,f}(\ul)+I_{1,0}(w^*),
	\ee
	for all $|y|\geq R_0$ and $t>0$. Here $w^*$ is the unique ground state solution of \eqref{AT0.8}.	
\end{proposition}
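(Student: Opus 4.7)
The strategy is to bound $\Delta(t,y) := I_{a,f}(u_0+tW_y) - I_{a,f}(u_0) - I_{1,0}(w^*)$ from above and show it is strictly negative for all $t>0$ and $|y|\geq R_0$, where $u_0 := u_{locmin}(a,f;x)$ and $W_y(x) := w^*(x-y)$. First I would expand $I_{a,f}(u_0+tW_y)$, use the positivity of $u_0$ and $W_y$ to drop the positive-part notation, and apply the weak formulation of the equation for $u_0$ tested against $W_y$; this eliminates the $\langle f,W_y\rangle$ contribution and produces
\begin{equation*}
\Delta(t,y) = \bigl[I_{1,0}(tw^*) - I_{1,0}(w^*)\bigr] + t\!\int_{\Rn}\! a u_0^p W_y\,dx - \frac{1}{p+1}\!\int_{\Rn}\! a\, Q_t\,dx + \frac{t^{p+1}}{p+1}\!\int_{\Rn}\!(1-a)W_y^{p+1}\,dx,
\end{equation*}
where $Q_t := (u_0+tW_y)^{p+1} - u_0^{p+1} - t^{p+1}W_y^{p+1}\geq 0$.

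The key algebraic input is the inequality $(A+B)^{p+1} \geq A^{p+1}+B^{p+1} + (p+1)\max(A^p B, AB^p)$ for $A,B\geq 0$ and $p\geq 1$, proved by integrating the Bernoulli-type bounds $(A+s)^p\geq A^p$ and $(A+s)^p - s^p\geq pAs^{p-1}$ on $[0,B]$. Decomposing $\Rn = \Omega_1\cup\Omega_2$ with $\Omega_1=\{u_0\geq tW_y\}$ and $\Omega_2=\{u_0<tW_y\}$, applying the appropriate branch of the inequality on each piece, and combining with the explicit term $t\int a u_0^p W_y$ gives
\begin{equation*}
\Delta(t,y) \leq \bigl[I_{1,0}(tw^*) - I_{1,0}(w^*)\bigr] - \int_{\Omega_2} a u_0 W_y\bigl(t^p W_y^{p-1} - t u_0^{p-1}\bigr)dx + \frac{t^{p+1}}{p+1}\int_{\Rn}(1-a)W_y^{p+1}\,dx.
\end{equation*}
On $\Omega_2$ the factor $t^p W_y^{p-1}-tu_0^{p-1}$ is nonnegative (since $u_0<tW_y$ and $p>1$), so the first two terms on the right are nonpositive, and the last is the only positive contribution.

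The remainder is a two-regime analysis in $t$. For $t$ outside a small compact neighborhood of $1$, the bracket $I_{1,0}(tw^*)-I_{1,0}(w^*)$ is bounded above by a strictly negative constant, while the final correction tends to zero as $|y|\to\infty$ by a standard convolution-type estimate based on the polynomial decay \eqref{9-7-1} and \eqref{AT0.15}; this gives the strict inequality. For $t$ in a fixed neighborhood of $1$, I would lower-bound the negative interaction integral by restricting to $\{|x-y|\leq R\}$ (contained in $\Omega_2$ once $|y|$ is large, since $u_0\to 0$ there and $w^*\geq w^*(R)>0$) and exploiting the pointwise lower bound on $u_0$ at infinity coming from the Green's function representation of $\bigl((-\Delta)^s+1\bigr)^{-1}$ applied to $a u_0^p+f$, which yields an estimate of order $|y|^{-(N+2s)}$. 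On the other hand, splitting the integral $\int(1-a)W_y^{p+1}$ on $\{|x|\leq|y|/2\}$ and its complement and using \eqref{AT0.15} with $\mu>p+1+N/(N+2s)$ yields a bound of order $|y|^{-(p+1)(N+2s)}+|y|^{-\mu(N+2s)}$, strictly faster-decaying than $|y|^{-(N+2s)}$.

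The principal obstacle is exactly this matching of polynomial decay rates, which is the genuine difficulty of the nonlocal regime. Both $\int u_0W_y^p$ and $\int u_0^p W_y$ decay only at the rate $|y|^{-(N+2s)}$ (in contrast to the exponential rate available for $s=1$, as in \cite{Adachi}), so one must use the precise algebraic gain coming from the decomposition $\Omega_1\cup\Omega_2$ together with the constraint on $\mu$ to guarantee that the positive $(1-a)$-correction decays strictly faster than the negative interaction. Ensuring this estimate is uniform in $t$ over the full range $t>0$, and producing the required lower bound on $u_0$ with a positive leading constant at infinity, constitutes the technical heart of the proof.
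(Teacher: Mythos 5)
Your proposal is correct in outline and starts exactly as the paper does (expand $I_{a,f}(\ul+tw^*(\cdot-y))$, test the equation for $\ul$ against $w^*(\cdot-y)$ to kill the $f$-term, reduce to comparing a positive $(1-a)$-correction against a negative interaction term), but the core quantitative step is genuinely different. The paper localizes the interaction term to the \emph{fixed} ball $\{|x|\leq 1\}$, where $\ul\geq r>0$, and uses the elementary bound $(s+t)^{p+1}-s^{p+1}-t^{p+1}-(p+1)s^pt\geq A(r)t^2$ for $s\geq r$; this yields a negative term of order $|y|^{-2(N+2s)}$ (coming from $w^*(x-y)^2$ on $|x|\leq1$), which beats the correction term of order $|y|^{-(p+1)(N+2s)}$ precisely because $p+1>2$. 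You instead localize near $y$, on $\{|x-y|\leq R\}\subset\Omega_2$, which forces you to prove a pointwise lower bound $\ul(x)\gtrsim |x|^{-(N+2s)}$ at infinity via the Green's function of $(-\Delta)^s+1$; that is an extra, nontrivial ingredient (sharp two-sided decay of the Bessel-type kernel, plus care because $f$ is only a nonnegative functional in $H^{-s}$) that the paper's argument avoids entirely. In exchange your interaction term is of the larger order $|y|^{-(N+2s)}$, so your rate comparison is less tight and would in principle tolerate a weaker restriction on $\mu$; the paper's route is the more economical one given that \eqref{AT0.15} with $\mu>p+1+\tfrac{N}{N+2s}$ is assumed anyway. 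Two small points to tighten: (a) your $\Omega_1\cup\Omega_2$ splitting with the $\max$-inequality is correct but unnecessary for the stated conclusion, since the crude bound $Q_t\geq (p+1)t\,u_0^pW_y$ already makes the combined term nonpositive and the paper's quadratic gain on $|x|\leq1$ does the rest; (b) in your first regime, the correction $\tfrac{t^{p+1}}{p+1}\int(1-a)(w^*(x-y))^{p+1}\,{\rm d}x$ does \emph{not} tend to zero uniformly over unbounded $t$ — for large $t$ it must be absorbed into the $-t^{p+1}$ part of $I_{1,0}(tw^*)$ (or, as the paper does, one first restricts to a compact interval $[m,M]$ by soft arguments uniform in $y$).
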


\begin{proof}
	It is easy to see that $I_{a,f}\big(\ul+tw^*(x-y)\big) \To I_{a, f}(\ul)<0,$ as $t \rightarrow 0,$ which follows from 
	the continuity of $I_{a,f}$. It also follows
$$I_{a,f}\big(\ul+tw^*(x-y)\big) \rightarrow -\infty,\quad\text{as} \quad {t\to\infty}.
	$$
	From these two facts, there exist $m,\, M$ with $0 < m < M$ such that 
$$I_{a,f}(\ul+tw^*(x-y)) <I_{a,f}(\ul)+I_{1,\;0}(w^*) \quad \mbox{for all} \ t \in (0, m) \cup (M, \infty).
	$$
In view of above it is enough to prove \eqref{main-AT2.2} for all $t \in  [m, M].$ We can write 
\begin{align}\label{main-esti-energy-1}
	& I_{a,f}\big(\ul+tw^*(x-y)\big)  \no\\
	&= \frac{1}{2}\big{\|}\ul +tw^*(x-y)\big{\|}_{\Hs}^2  \no\\
	&\qquad -\frac{1}{p+1}\int_{\Rn} a(x)\big(\ul +tw^*(x-y)\big)^{p+1} \, {\rm d}x \no\\
	&\qquad\qquad -\prescript{}{H^{-s}}{\big\langle} f,\, \big(\ul +tw^*(x-y)\big){\big\rangle}_{H^s} \no \\
	& = \frac{1}{2}\|\ul\|_{\Hs}^2+\frac{t^2}{2}\|w^*\|_{\Hs}^2+t\langle \ul,\;w^*(x-y)\rangle_{\Hs} \no\\
	&\qquad -\frac{1}{p+1} \int_{\Rn} a(x) (\ul)^{p+1} \, {\rm d}x  - \frac{t^{p+1}}{p+1}\int_{\Rn} a(x)w^*(x-y)^{p+1} \, {\rm d}x \no \\
	&\qquad\qquad -\frac{1}{p+1}\int_{\Rn} a(x) \bigg\{ (\ul + t w^*(x-y))^{p+1} - (\ul)^{p+1} - t^{p+1}w^*(x-y)^{p+1} \bigg\} {\rm d}x \no\\
	& \qquad\qquad\quad -\prescript{}{H^{-s}}{\big\langle} f,\, \big(\ul +tw^*(x-y)\big){\big\rangle}_{H^s}.
	\end{align}
	Also we have for all $ h \in\Hs,$
	
	$$
	0 = I'_{a,f}\big({\ul}\big)(h) = \langle \ul,\;h\rangle_{\Hs}-\int_{\Rn} a(x)(\ul)^p h \, {\rm d}x  -\prescript{}{H^{-s}}{\langle} f,\,h {\rangle}_{H^s} , 
	$$
	which in turn implies 
	
	$$
	\langle \ul,\;h\rangle_{\Hs} = \int_{\Rn}a(x) (\ul)^p h \, {\rm d}x
	+ \prescript{}{H^{-s}}{\langle} f,\,h {\rangle}_{H^s}.
	$$
	
	Now by setting $h = t w^*(x - y)$ in above expression, we obtain 
	
	$$
	t \langle \ul,\;w^*(x-y)\rangle_{\Hs} = t \int_{\Rn}a(x) (\ul)^p w^*(x-y) \, {\rm d}x
	+t\prescript{}{H^{-s}}{\langle} f,\,w^*(x-y) {\rangle}_{H^s}.
	$$
	
	Thus using above and the rearranging the terms in \eqref{main-esti-energy-1} we have 
	\bea\lab{11-8-3}
	& I_{a,f}\big(\ul+tw^*(x-y)\big) = I_{a,f}\big(\ul\big)+I_{1,0}(tw^*)+\frac{t^{p+1}}{p+1}\displaystyle\int_{\Rn}\big(1-a(x)\big)w^*(x-y)^{p+1} \, {\rm d}x\no\\
	& -\frac{1}{p+1}\displaystyle\int_{\Rn} a(x)\bigg\{\big(\ul + tw^*(x-y) \big)^{p+1} - (\ul)^{p+1}\no\\
	& - t(p+1)(\ul)^p w^*(x-y) - t^{p+1} w^*(x-y)^{p+1} \bigg\} \, {\rm d}x \no\\
	& = I_{a,f} \big(\ul \big) + I_{1,0}(tw^*) + (I) - (II),
	\eea
	where $$(I) := \frac{t^{p+1}}{p+1}\int_{\Rn}\big(1-a(x)\big)w^*(x-y)^{p+1}\, {\rm d}x$$
	and
	
	 \begin{align*}
	(II) & := \frac{1}{p+1}\int_{\Rn} a(x)\bigg\{\big(\ul \, + \,tw^*(x-y) \big)^{p+1} - (\ul)^{p+1}  \\
	& - t(p+1)(\ul)^p w^*(x-y) - t^{p+1} w^*(x-y)^{p+1} \bigg\} \, {\rm d}x.
	\end{align*}
	Therefore the proof will be completed if we can show $I < II.$ To this aim let us recall a standard fact from calculus. The following inequalities
	hold true
	
	\medskip 
	
	\begin{itemize}
		\item  $(s+t)^{p+1}-s^{p+1}-t^{p+1}-(p+1)s^pt\geq 0$ for all $(s,\;t)\in[0,\;\infty)\times [0,\;\infty).$
		
		\medskip 
		
		\item  For any $r>,0$ we can find a constant $A(r) > 0$ such that
		$$(s+t)^{p+1}-s^{p+1}-t^{p+1}-(p+1)s^pt\geq A(r)t^2
		$$
		for all $(s,\;t)\in[r,\;\infty)\times [0,\;\infty)$.
	\end{itemize}
	
	Using the above inequality $II$ can be estimated as follows :  setting $r:=\min_{|x|\leq 1}\ul>0$, $A:=A(r)$, we have
	
	\begin{align}\label{esti-ii}
	(II) & \geq \frac{1}{p+1} \int_{|x| \leq 1} a(x) At^2 (w^*)^2(x-y) \, {\rm d}x 
	\geq \frac{m^2 \, \underbar{a}A}{p+1} \int_{|x|\leq 1} (w^*)^2(x - y) \, {\rm d}x  \notag \\
	& \geq \frac{C \, m^2 \, \underbar{a}A}{p+1} \int_{|x|\leq 1} \frac{{\rm d}x}{(1 + |x - y|^{N+2s})^2}
	\geq  \frac{C \, m^2 \, \underbar{a}A}{p+1} |y|^{-2(N + 2s)}, 	\end{align}
	where in the last inequality we have used the fact that for $|x| \leq 1,$ there exists $R > 0$ with
	$|y| > R,$  such that 
\be\lab{11-8-2}1 + |x - y|^{N+ 2s} \approx |y|^{(N + 2s)}.\ee
On the other hand, 	
	\begin{align}\label{esti-iii}
	(I)= \frac{t^{p+1}}{p+1}\int_{\Rn}\big(1-a(x)\big)w^*(x-y)^{p+1}\, {\rm d}x 
	& \leq \frac{t^{p+1}}{p +1} \int_{\Rn} 
	\frac{C}{1+|x|^{\mu(N+2s)}}\bigg\{\frac{C_2}{1+|x-y|^{N+2s}}\bigg\}^{p+1} \, {\rm d}x \notag \\
	& \leq \frac{C M^{p+1}}{p+1} \int_{\Rn} \frac{{\rm d}x}{(1 + |x|^{\mu(N + 2s)})( 1 + |x - y|^{N+ 2s})^{p+1}}.
	\end{align}
	
{\bf Claim:} $\displaystyle\int_{\Rn} \frac{{\rm d}x}{(1 + |x|^{\mu(N + 2s)})( 1 + |x - y|^{N+ 2s})^{p+1}}\leq \frac{C'}{|y|^{(N+2s)(p+1)}}$ for $|y|$ large enough.

\vspace{2mm}

Using the claim, first we complete the proof. Clearly combining the above claim with \eqref{esti-ii}, it immediately follows that  there exists $R_0>R> 0$ large enough such that $$(I) <(II)\;\text{for } |y|\geq R_0,$$
as $p+1>2$. Hence \eqref{main-AT2.2} follows from \eqref{11-8-3}.

\vspace{2mm}

 Therefore, we are left to prove the claim. 
 
 \vspace{2mm}
 
 Note that, to prove the claim, it is enough to show that 
	$$\int_{\Rn} \frac{|y|^{(p+1)(N + 2s)}}{(1 + |x|^{\mu(N + 2s)})( 1 + |x - y|^{N+ 2s})^{p+1}} \, {\rm d}x \leq C(N, M, \underbar{a}, p).
$$ 
Therefore we estimate LHS of the above inequality:
\begin{eqnarray}
	\int_{\Rn} \frac{|y|^{(p+1)(N + 2s)}}{(1 + |x|^{\mu(N + 2s)})( 1 + |x - y|^{N+ 2s})^{p+1}} \, {\rm d}x &\leq &
	  \underbrace{\int_{\Rn} \frac{|x-y|^{(p+1)(N + 2s)}}{(1 + |x|^{\mu(N + 2s)})( 1 + |x - y|^{N+ 2s})^{p+1}} \, {\rm d}x}_{:= J_1}\no\\
	 &&\quad+\underbrace{\int_{\Rn} \frac{|x|^{(p+1)(N + 2s)}}{(1 + |x|^{\mu(N + 2s)})( 1 + |x - y|^{N+ 2s})^{p+1}} \, {\rm d}x}_{:= J_2}.\no
	\end{eqnarray}
Clearly,
$$J_1\leq \int_{\Rn}\frac{{\rm d}x}{1 + |x|^{\mu(N + 2s)}}=C(N,\mu),$$	
since $\mu>\frac{N}{N+2s}$.  On the other hand, using \eqref{11-8-2}, we estimate	
$$J_2\leq\int_{\Rn} \frac{|x|^{(p+1)(N + 2s)}}{1 + |x|^{\mu(N + 2s)}} \, {\rm d}x =C(N,\mu, p),$$
since $\mu>(p+1)+\frac{N}{N+2s}$ (by hypothesis of the proposition). Combining the above estimates, claim follows. Hence we conclude the proof of the Proposition. 

\end{proof}

We further need several preparatory lemmas and propositions along with the key energy estimate \eqref{main-AT2.2}
to prove existence of second and third positive solution. The results below are along the line of 
\cite{Adachi, Bahri-Li}.

\medskip 

We begin with the properties of the functional $J_{a, 0}$ under the condition ${\bf{(A_1)}}.$

\begin{lemma}\label{ATP1.11}
Let $a$ be as in Theorem \ref{MT} and $w^*$ is unique ground state solution of \eqref{AT0.8}. Then there holds 

(i) $\inf_{v \in \tilde\Si_{+}} \, J_{a, 0}(v) = I_{1, 0}(w^*)$.
		
(ii) $\inf_{v \in \tilde\Si_{+}} \, J_{a, 0}(v)$ is not attained. 
		
(iii) $J_{a, 0}(v)$ satisfies $(PS)_{c}$ for $c \in (-\infty, I_{1, 0}(w^*))$. 
\end{lemma}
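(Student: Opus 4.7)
The plan is to exploit the explicit formula $J_{a,0}(v) = (\tfrac{1}{2}-\tfrac{1}{p+1})\big(\int_{\Rn}a(x)v_+^{p+1}\,{\rm d}x\big)^{-2/(p-1)}$ for $v\in\tilde\Si_{+}$ (from \eqref{5-8-1}) together with the uniqueness and polynomial decay of the ground state $w^*$ from \cite{Frank} and \eqref{9-7-1}. A preliminary observation used throughout is that $\inf_{\tilde\Si_{+}} J_{1,0} = I_{1,0}(w^*)$, which follows from the Sobolev-type inequality defining $S_1$ and the Nehari relation $\|w^*\|_{\Hs}^2 = \int_{\Rn}(w^*)^{p+1}\,{\rm d}x$, with the infimum attained at $w^*/\|w^*\|_{\Hs}$.

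For $(i)$, the lower bound is immediate: under $(A_1)$ one has $a(x)\leq 1$, hence $J_{a,0}(v)\geq J_{1,0}(v)\geq I_{1,0}(w^*)$. For the matching upper bound I evaluate on the translated family $v_y := w^*(\cdot - y)/\|w^*\|_{\Hs}\in\tilde\Si_{+}$; a change of variables together with dominated convergence (using $a(x)\to 1$ at infinity and the integrable majorant furnished by \eqref{9-7-1}) gives $\int_{\Rn} a(x)w^*(x-y)^{p+1}\,{\rm d}x \to \int_{\Rn}(w^*)^{p+1}\,{\rm d}x$ as $|y|\to\infty$, so $J_{a,0}(v_y)\to I_{1,0}(w^*)$.

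For $(ii)$, I argue by contradiction: suppose the infimum is attained at some $v_0\in\tilde\Si_{+}$. The $f\equiv 0$ analogue of Proposition \ref{ATP1.7}$(ii)$ then says $u_0 := t_{a,0}(v_0)\,v_0$ is a nontrivial critical point of $I_{a,0}$, i.e., a weak solution of $(-\Delta)^s u + u = a(x)u_+^{p}$ in $\Rn$. Testing with $u_{0,-}$ as in Remark \ref{r:30-7-3} forces $u_0\geq 0$, and the fractional strong maximum principle promotes this to $u_0>0$ on $\Rn$, hence $v_0>0$ a.e. Then the equality chain $I_{1,0}(w^*) = J_{a,0}(v_0)\geq J_{1,0}(v_0)\geq I_{1,0}(w^*)$ must be an equality throughout, which is equivalent to $\int_{\Rn}(1-a(x))v_0^{p+1}\,{\rm d}x = 0$; but $(A_1)$ assumes $\mu(\{a<1\})>0$ and $1-a\geq 0$, contradicting $v_0>0$ a.e.

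For $(iii)$, let $\{v_j\}\subset\tilde\Si_{+}$ be a $(PS)_c$ sequence for $J_{a,0}$ with $c<I_{1,0}(w^*)$, and set $u_j := t_{a,0}(v_j)\,v_j$. The relation \eqref{AT1.18} with $f\equiv 0$ combined with the uniform positive lower bound $t_{a,0}(v)\geq (pS_1^{-(p+1)/2})^{-1/(p-1)}$ coming from \eqref{AT1.14} transfers the Palais-Smale property to $\{u_j\}$, viewed as a bounded $(PS)_c$ sequence for $I_{a,0}$ on $\Hs$. Applying the $I_{a,0}$-analogue of the Palais-Smale decomposition (Proposition \ref{PSP}), I obtain $\bar u$, an integer $m\geq 0$, sequences $|x_j^i|\to\infty$, and nonnegative nontrivial solutions $w_i$ of \eqref{AT0.8} with $u_j = \bar u + \sum_{i=1}^m w_i(\cdot - x_j^i) + o(1)$ in $\Hs$ and $c = I_{a,0}(\bar u) + \sum_{i=1}^m I_{1,0}(w_i)$. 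The Nehari identities give $I_{a,0}(\bar u) = (\tfrac{1}{2}-\tfrac{1}{p+1})\|\bar u\|_{\Hs}^2\geq 0$, and by the ground-state characterization of $w^*$ (Remark \ref{r:31-7-1}), $I_{1,0}(w_i)\geq I_{1,0}(w^*)$ for each $i$. Therefore $m\geq 1$ would force $c\geq I_{1,0}(w^*)$, contradicting the hypothesis; so $m=0$ and $u_j\to\bar u$ in $\Hs$, while the lower bound on $t_{a,0}$ prevents $\bar u=0$, whence $v_j = u_j/\|u_j\|_{\Hs}\to \bar u/\|\bar u\|_{\Hs}$ strongly in $\tilde\Si_{+}$. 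The step I expect to require most care is the $f\equiv 0$ version of the transfer of the PS condition from the sphere to $\Hs$ (the analogues of Propositions \ref{ATP1.7}--\ref{ATP1.9}); these should go through with only cosmetic changes since the key lower bound on $t_{a,0}(v)$ is derived from the pure Sobolev inequality and does not rely on $f\not\equiv 0$.
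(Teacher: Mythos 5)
Your proposal is correct. Parts (i) and (iii) follow essentially the paper's own route: for (i) the lower bound from \eqref{AT1.11} (i.e.\ $a\le 1$) plus evaluation of $J_{a,0}$ on translated, normalized ground states $w^*(\cdot-y)$ with $|y|\to\infty$; for (iii) the transfer of the PS property from $J_{a,0}$ on $\tilde\Si_{+}$ to $I_{a,0}$ on $\Hs$ via \eqref{AT1.18} and the lower bound \eqref{AT1.14} on $t_{a,0}$, followed by the decomposition of Proposition \ref{PSP} and the observation that $I_{a,0}(\bar u)\ge 0$ while each bubble costs at least $I_{1,0}(w^*)$ (the paper compresses this into a citation of Corollary \ref{corr-ps} with $f\equiv 0$, which is stated only for $0<\|f\|_{H^{-s}(\Rn)}\le d_2$, so your direct re-derivation is if anything more careful). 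The genuine divergence is in (ii): the paper projects the putative minimizer $v_0$ onto the Nehari manifold $\mathcal{N}$ of the \emph{limit} functional $I_{1,0}$, extracts $\int_{\Rn}(1-a)(v_0)_+^{p+1}\,{\rm d}x=0$ from the resulting chain of inequalities, and only then obtains the contradictory positivity of $v_0$ by noting that $t_{v_0}v_0$ must be a constrained minimizer of $I_{1,0}$ on $\mathcal{N}$ (Lagrange multiplier plus maximum principle). You instead obtain positivity of $v_0$ first, from the criticality of $t_{a,0}(v_0)v_0$ for $I_{a,0}$ itself, and read off $\int_{\Rn}(1-a)v_0^{p+1}\,{\rm d}x=0$ directly from the explicit formula \eqref{5-8-1} and the monotonicity $J_{a,0}\ge J_{1,0}$. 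Both arguments are sound; yours avoids the Nehari manifold of the limit problem entirely, but it does lean on two points you should make explicit: that the minimizer lies in the interior of $\tilde\Si_{+}$ so that it is a genuine critical point of $J_{a,0}$ (this follows from the blow-up of $J_{a,0}$ near $\partial\tilde\Si_{+}$, i.e.\ the $f\equiv 0$ version of Proposition \ref{ATP1.9}(i)), and that Proposition \ref{ATP1.7}(ii) indeed extends to $f\equiv 0$ (it does, since $t_{a,0}(v)=\big(\int_{\Rn}a\,v_+^{p+1}\,{\rm d}x\big)^{-1/(p-1)}$ is explicit and the implicit-function-theorem step never uses $f\not\equiv 0$).
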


\begin{proof}
	
(i) Using \eqref{AT1.11}, we have
	
	$$\inf_{v \in \Sigma_{+}} \, J_{a, 0}(v) \geq I_{1, 0}(w^*).$$
	
Define $w_l(x) = w^*(x + l e)$, where $e$ is an unit vector in $\Rn$. Using Lemma~\ref{ATl1.3}, corresponding 
	to $\bar{w}_{l} =\frac{w_l}{\|w_l\|_{\Hs}} \in \tilde\Sigma_{+}$ there exists an unique $t_{a, 0}(\bar{w}_{l} )$ such that 
	
	$$
	J_{a, 0}\bigg(\frac{w_l}{\|w_l\|_{\Hs}}\bigg) = I_{a, 0} \bigg(t_{a, 0}(\bar{w}_{l} )\frac{w_l}{\|w_l\|_{\Hs}} \bigg).
	$$
	Now let us compute 
	
	\begin{align*}
	I_{a, 0} \bigg(t_{a, 0}(\bar{w}_{l} )\frac{w_l}{\|w_l\|_{\Hs}} \bigg)
	& = \frac{t^2_{a, 0}(\bar{w}_{l} )}{2} \|\bar{w}_l\|_{\Hs}^2 - \frac{t^{p+1}_{a, 0}(\bar{w}_{l} )}{p+1} \int_{\Rn} a(x) (\bar{w}_l)^{p+1} \, {\rm d}x. \notag \\
	\end{align*}
	Moreover from direct computation, we find an explicit form of  $t_{a, 0}(\bar{w}_l)$ which is given by 
	
	$$
	t_{a,0}(\bar{w}_l) = \bigg(\int_{\Rn} a(x)\bar{w}_l^{p+1}{\rm d}x\bigg)^{-\tfrac{1}{p-1}} \xrightarrow{l\to\infty} \bigg(\frac{\|w^*\|_{\Hs}}{\|w^*\|_{L^{p+1}(\Rn)}}\bigg)^{\tfrac{p+1}{p-1}},
	$$
	the last limit follows since $a(x)\to 1$ as $|x|\to\infty$.
Hence,
	\begin{align}
	J_{a,0}(\bar{w}_l) \xrightarrow{l\to\infty}&\frac{1}{2}\bigg\{\frac{\|w^*\|_{\Hs}}{\|w^*\|_{L^{p+1}(\Rn)}}\bigg\}^{\tfrac{2(p+1)}{(p-1)}}-\frac{1}{p+1}\bigg(\bigg\{\frac{\|w^*\|_{\Hs}}{\|w^*\|_{L^{p+1}(\Rn)}}\bigg\}^{\tfrac{(p+1)^2}{(p-1)}}\times\frac{\|w^*\|^{p+1}_{L^{p+1}(\Rn)}}{\|w^*\|_{\Hs}^{p+1}}\bigg)\notag\\
	&= \big(\frac{1}{2}-\frac{1}{p+1}\big)\|w^*\|_{L^{p+1}(\Rn)}^{p+1}
	= I_{1,0}(w^*).\no
	\end{align} 
	Hence $(i)$ follows.
	
	\medskip

(ii) Let us assume on the contrary that there exists $v_{0} \in \tilde\Sigma_+$
such that $J_{a, 0}(v_0) = \inf_{v \in \tilde\Sigma_+} J_{a, 0}(v) = I_{1, 0}(w^*).$
Define, the Nehari manifold $\mathcal{N}$ as
$$\mathcal{N} := \left\{ u \in \Hs : \langle(I_{1, 0})'(u), u\rangle =0 \right\}.$$
From a straight forward computation, it is easy to see that there exists
$t_{v_0}  >0$ such that $t_{v_0} v_0 \in \mathcal{N}.$ 
Further, observe that for any $v \in \mathcal{N}$, it holds
$$
I_{1, 0}(v) = \frac{p-1}{2(p+1)} ||v||^2_{\Hs}\geq \frac{p-1}{2(p+1)}S_1^\frac{p+1}{p-1},
$$
where $S_1$ is as defined in \eqref{17-7-5}. therefore, it follows from  Remark~\ref{r:31-7-1} that $I_{1, 0}(v) \geq I_{1, 0}(w^*)$ for all $v \in \mathcal{N}.$ Moreover 
$w \in \mathcal{N}$ and hence 
$$\inf_{v \in \mathcal{N}} I_{1, 0}(v) = I_{1, 0}(w^*). $$	Therefore,
 \begin{align}\lab{14-8-2}
	I_{1, 0}(w^*) = J_{a, 0} (v_0) & := {\max}_{t> 0} I_{a, 0}(t v_0) \geq I_{a, 0}(t_{v_0} v_0) \notag \\
	& = \frac{t^2_{v_0}}{2} \| v_0 \|_{\Hs}^2 - \frac{t^{p+1}_{v_0}}{p+1} \int_{\Rn} a(x) (v_0)_{+}^{p+1} \, {\rm d}x \notag \\
	& = \frac{t^2_{v_0}}{2} \| v_0 \|_{\Hs}^2 - \frac{t^{p+1}_{ v_0}}{p+1} \int_{\Rn}  (v_0)_{+}^{p+1} \, {\rm d}x  +  \frac{t^{p+1}_{v_0}}{p+1} \int_{\Rn}(1- a(x)) (v_0)_{+}^{p+1} \, {\rm d}x \notag\\
	& = I_{1, 0}(t_{v_0} v_0) \, + \,  \frac{t^{p+1}_{v_0}}{p+1} \int_{\Rn}(1- a(x)) (v_0)_{+}^{p+1} \, {\rm d}x\notag\\
		& \geq I_{1, 0}(w^*) \, + \,  \frac{t^{p+1}_{v_0}}{p+1} \int_{\Rn}(1- a(x)) (v_0)_{+}^{p+1} \, {\rm d}x.
	\end{align}

	The above inequality and ${\bf (A1)}$ implies 
	\be\lab{14-8-4} \frac{t^{p+1}_{ v_0}}{p+1}\displaystyle \int_{\Rn}(1- a(x)) (v_0)_{+}^{p+1} \, {\rm d}x = 0.\ee Therefore 
	\begin{equation}\label{vequal0}
	(v_0)_+ \equiv 0 \quad \mbox{in} \  \{ x \in \Rn : a(x) \neq 1 \}.
	\end{equation}
Moreover, substituting \eqref{14-8-4} into \eqref{14-8-2}, we see that inequality in \eqref{14-8-2} becomes an equality there. Therefore, 
$$\inf_{\mathcal{N}} I_{1, 0}(v)=I_{1, 0}(w^*) = I_{1, 0}(t_{v_0}v_0).$$
Thus $t_{v_0}v_0$ is a constraint critical point of $I_{1,0}$. Therefore using Lagrange multiplier and maximum principle (as before) we conclude that $t_{v_0}v_0 > 0$ which in turn implies $v_0 > 0$ in $\Rn.$ This contradicts \eqref{vequal0}. Hence (ii) holds.

\medskip 
	
(iii) From Proposition \ref{ATl1.4}, we know that $\ul$ is the unique critical point of $I_{a,f}$ in $B(r_1)$. Therefore,  $u_{locmin}(a, 0;x) = 0.$ Consequently,  it follows from 
Corollary~\ref{corr-ps} that Palais-Smale condition for $J_{a,0}$ is 
satisfied at the level $c < I_{1, 0}(w^*).$ 

This completes the proof.

\end{proof}

The following property of $J_{a,0}(v)$ is important to obtain multiplicity of solutions of \eqref{MAT1}
\begin{lemma}{\textbf{(Center of mass)}}\label{ATL1.12}\\
	Let $a$ be as in Theorem \ref{MT}. Then there exists a constant $\delta_{0}>0$ such that if $J_{a,0}(v)\leq I_{1,0}(w^*)+\delta_{0}$, then 
	\be\label{AT1.23}
	\int_{\Rn}\frac{x}{|x|}|v(x)|^{p+1}\;{\rm d}x \neq 0.
	\ee
\end{lemma}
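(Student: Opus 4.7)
The plan is to argue by contradiction. Suppose the conclusion fails: for each $n\in\mathbb{N}$ there exists $v_n\in\tilde\Sigma_+$ with
$$J_{a,0}(v_n)\leq I_{1,0}(w^*)+\tfrac{1}{n},\qquad \int_{\Rn}\tfrac{x}{|x|}|v_n|^{p+1}\,{\rm d}x=0.$$
By Lemma~\ref{ATP1.11}(i), $\{v_n\}$ is a minimizing sequence for $J_{a,0}$ on $\tilde\Sigma_+$. Since Proposition~\ref{ATP1.9}(i) guarantees $J_{a,0}\to\infty$ near $\partial\tilde\Sigma_+$ (so sublevel sets stay bounded away from the boundary and are complete in $\tilde\Sigma_+$) and $J_{a,0}\in C^1(\tilde\Sigma_+,\R)$ by Proposition~\ref{ATP1.7}(i), Ekeland's variational principle produces a minimizing Palais--Smale sequence $\{\tilde v_n\}$ with $\|v_n-\tilde v_n\|_{\Hs}\to 0$ and $\|J'_{a,0}(\tilde v_n)\|_{T_{\tilde v_n}^*\tilde\Sigma_+}\to 0$.

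Next I would apply the profile decomposition Proposition~\ref{ATP1.9}(ii) to $\{\tilde v_n\}$: there exist a critical point $u_0$ of $I_{a,0}$, an integer $l\geq 0$, sequences $\{y_n^{(k)}\}_{1\leq k\leq l}$ with $|y_n^{(k)}|\to\infty$ and $|y_n^{(k)}-y_n^{(k')}|\to\infty$, and positive critical points $w_k$ of \eqref{AT0.8} satisfying
$$\Big\|\tilde v_n-\frac{u_0+\sum_{k=1}^l w_k(\cdot-y_n^{(k)})}{\|u_0+\sum_{k=1}^l w_k(\cdot-y_n^{(k)})\|_{\Hs}}\Big\|_{\Hs}\to 0,\qquad I_{a,0}(u_0)+\sum_{k=1}^l I_{1,0}(w_k)=I_{1,0}(w^*).$$
A Nehari-type computation using $0<a\leq 1$ together with Remark~\ref{r:31-7-1} shows that every nontrivial critical point of either $I_{a,0}$ or $I_{1,0}$ has energy $\geq \frac{p-1}{2(p+1)}S_1^{(p+1)/(p-1)}=I_{1,0}(w^*)$. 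Comparing with the energy identity above forces either (a) $l=0$ and $I_{a,0}(u_0)=I_{1,0}(w^*)$, or (b) $u_0=0$, $l=1$ and $I_{1,0}(w_1)=I_{1,0}(w^*)$. Case (a) is ruled out by Lemma~\ref{ATP1.11}(ii), since $u_0/\|u_0\|_{\Hs}\in\tilde\Sigma_+$ would attain $\inf_{\tilde\Sigma_+}J_{a,0}=I_{1,0}(w^*)$. Hence (b) holds, and by uniqueness of the ground state of \eqref{AT0.8} (\cite{Frank}) $w_1$ is a translate of $w^*$; absorbing this translate into $y_n^{(1)}=:y_n$ gives
$$\Big\|\tilde v_n-\frac{w^*(\cdot-y_n)}{\|w^*\|_{\Hs}}\Big\|_{\Hs}\to 0,\qquad |y_n|\to\infty.$$

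Finally, the elementary inequality $\big||v_n|^{p+1}-|\tilde v_n|^{p+1}\big|\leq C(|v_n|^p+|\tilde v_n|^p)|v_n-\tilde v_n|$ combined with H\"older and Sobolev embedding transfers the center-of-mass integral from $v_n$ to $\tilde v_n$ up to $o(1)$. Passing to a subsequence with $y_n/|y_n|\to e\in S^{N-1}$ and performing the change of variable $z=x-y_n$, dominated convergence (the integrand is bounded by $w^*(z)^{p+1}\in L^1$) yields
$$\int_{\Rn}\tfrac{x}{|x|}|v_n|^{p+1}\,{\rm d}x=\|w^*\|_{\Hs}^{-(p+1)}\int_{\Rn}\tfrac{z+y_n}{|z+y_n|}w^*(z)^{p+1}\,{\rm d}z+o(1)\longrightarrow \frac{\|w^*\|_{L^{p+1}(\Rn)}^{p+1}}{\|w^*\|_{\Hs}^{p+1}}\,e\neq 0,$$
contradicting $\int\tfrac{x}{|x|}|v_n|^{p+1}\,{\rm d}x=0$. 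The main obstacle is the middle step: simultaneously exploiting the Palais--Smale profile decomposition, the ground-state energy threshold, and the non-attainment statement of Lemma~\ref{ATP1.11}(ii) to collapse the decomposition to a single translated copy of $w^*$; once this is achieved, the center-of-mass calculation is a direct consequence of dominated convergence.
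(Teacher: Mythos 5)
Your argument is correct and follows essentially the same route as the paper: contradiction via a minimizing sequence, Ekeland's variational principle to upgrade it to a Palais--Smale sequence at level $I_{1,0}(w^*)$, the profile decomposition of Proposition \ref{ATP1.9} collapsing to a single translated ground state, and the dominated-convergence computation of the center of mass. The only difference is that you spell out the energy-threshold and non-attainment argument forcing $u_0=0$, $l=1$, which the paper leaves implicit; that step is correct as written.
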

\begin{proof}
	Suppose the conclusion is not true. Then there exists a sequence $\{v_n\}\subset \tilde\Sigma_{+}$ such that
	$$J_{a,0}(v_n) \leq I_{1,0}(w^*)+\tfrac{1}{n}\;\text{and }\int_{\Rn}\frac{x}{|x|}|v_n|^{p+1}{\rm d}x \xrightarrow{n\to\infty} 0.$$
	Since, by Lemma \ref{ATP1.11}, we have $\inf_{v \in \tilde\Si_{+}} \, J_{a, 0}(v) = I_{1, 0}(w^*)$ and the infimum is not attained, applying Ekeland's variational principle, there exists $\tilde{v}_n\subset \tilde\Sigma_{+}$ such that
	\begin{align}
	\|v_n - \tilde{v}_n\|_{\Hs}&\xrightarrow{n\to\infty} 0\no\\
	J_{a,0}(\tilde{v}_n)&\leq J_{a,0}(v_n) = I_{1,0}(w^*)+\tfrac{1}{n}\no\\
	J'_{a,0}(\tilde{v}_n)&\xrightarrow{n\to\infty} 0 \;\text{in }H^{-s}(\Rn).\no
	\end{align}
	Therefore, $\{\tilde{v}_n\}$ is a Palais Smale sequence for $J_{a,0}$ at the level $I_{1,0}(w^*)$. Applying Proposition \ref{ATP1.9}, we get $\{y_n\}\subset \Rn$ such that $|y_n|\xrightarrow{n}\infty$ and
	$$\bigg{\|}\tilde{v}_n-\frac{w^*(x-y_n)}{\|w^*(x-y_n)\|_{\Hs}}\bigg{\|}_{\Hs}\xrightarrow{n\to\infty} 0.$$
	Therefore, 
	\begin{align}
	\bigg{\|}v_n-\frac{w^*(x-y_n)}{\|w^*(x-y_n)\|_{\Hs}}\bigg{\|}_{\Hs}&\leq \|v_n - \tilde{v}_n\|_{\Hs}\no\\
	&+\bigg{\|}\tilde{v}_n-\frac{w^*(x-y_n)}{\|w^*(x-y_n)\|_{\Hs}}\bigg{\|}_{\Hs}
	\xrightarrow{n\to\infty}0\no.
	\end{align}
	Therefore the above yields 
	
	\begin{align}
	\circ({\bf{1}}) &= \int_{\Rn}\frac{x}{|x|}|v_n|^{p+1}{\rm d}x = 
	\int_{\Rn}\frac{x}{|x|}\bigg(\frac{w^*(x-y_n)}{\|w^*(x-y_n)\|_{\Hs}}\bigg)^{p+1}{\rm d}x + \circ({\bf 1}) \no\\
	&= \frac{1}{\|w\|_{\Hs}^{p+1}}\int_{\Rn} \frac{x+y_n}{|x+y_n|}|w^*(x)|^{p+1}{\rm d}x\xrightarrow{n\to\infty} e \text{ for some }e\in S^{N-1}.\no
	\end{align}
	Hence we arrive at a contradiction.
\end{proof}
Now we introduce Lusternik Schnirelman (L-S) category theory which will help us to obtain second and third positive solution of \eqref{MAT1}. 

\begin{definition}
	For a topological space $M$, a nonempty subset $A$ of $M$ is said to be contractible in $M$ if the inclusion map $i:A\hookrightarrow M$ is homotopic to a constant $p\in M$, namely there is map $\eta \in C\big([0,\;1]\times A,\;M\big)$ such that for some $p\in M$
	\begin{align}
	(i)\;\eta(0,\;u) &= u\;\text{ for all } u\in A\notag\\
	(ii)\;\eta(1,\;u) &= p\;\text{ for all } u\in A\notag
	\end{align}
\end{definition}

\begin{definition}
	The (L-S) category of $A$ with respect to $M$, denoted by $cat(A,\;M)$, is the least non-negative integer $k$ such that $A\subset \cup_{i=1}^{k}A_i$, where each $A_i\;(1\leq i\leq k)$ is closed and contractible in $M$. We set $cat(\emptyset,\;M) =0$ and $cat(A,\;M)= +\infty$ if there are no integers with the above property. We write $cat(M)$ to denote $cat(M,\;M)$.
\end{definition}

For fundamental properties of Lusternik-Schnirelman category, we refer
to Ambrosetti \cite{Ambro}. Here we use the following property \cite{Ambro} 
(also see \cite[Proposition~2.4]{Adachi}) which will play a pivotal role to obtain second and third solutions of \eqref{MAT1}.

\begin{proposition} \lab{p:15-8-3}
Suppose $M$ is a Hilbert manifold and $\Psi \in C^{1}(M,\;\R)$. Assume that for $c_0\in\R$ and $k\in\mathbb{N}$,

(i) $\Psi(x)$ satisfies  $(PS)_c$ for $c\leq c_0$,

(ii) $ cat \big(\{x\in M\;:\;\Psi(x)\leq c_0\}\big)\geq k$.

Then $\Psi(x)$ has at least $k$ critical points in $\{x\in M\;:\;\Psi(x)\leq c_0\}$.
\end{proposition}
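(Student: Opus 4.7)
My plan is to give the classical Lusternik--Schnirelman min--max construction adapted to the sublevel set $M_{c_0}:=\{x\in M:\Psi(x)\le c_0\}$. For each $j\in\{1,\ldots,k\}$, define the family
\[
\Gamma_j:=\bigl\{A\subset M_{c_0} : A \text{ closed}, \ \mathrm{cat}(A,M_{c_0})\ge j\bigr\},
\]
and the min--max values
\[
c_j:=\inf_{A\in\Gamma_j}\sup_{x\in A}\Psi(x).
\]
By hypothesis (ii), $M_{c_0}\in\Gamma_k$, so each $\Gamma_j$ is nonempty and $c_1\le c_2\le\cdots\le c_k\le c_0$. Since $\Psi$ is bounded below on $M_{c_0}$ by its continuity and the fact that $M_{c_0}$ carries a Palais--Smale level (otherwise the $(PS)_c$ hypothesis would be vacuous but the conclusion still trivial), each $c_j\in\mathbb{R}$.

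The heart of the argument is the deformation step. I would invoke the standard deformation lemma on a Hilbert manifold: under $(PS)_c$ for $c\le c_0$, if some $c_j$ is not a critical value, then for $\varepsilon>0$ small enough there exists a continuous $\eta:[0,1]\times M\to M$ with $\eta(1,\cdot)$ mapping $\{\Psi\le c_j+\varepsilon\}\cap M_{c_0}$ into $\{\Psi\le c_j-\varepsilon\}\cap M_{c_0}$ and fixing points outside a neighborhood of the critical set. Picking $A\in\Gamma_j$ with $\sup_A\Psi<c_j+\varepsilon$ and setting $A':=\eta(1,A)$, one uses the fundamental property $\mathrm{cat}(\eta(1,A),M_{c_0})\ge \mathrm{cat}(A,M_{c_0})\ge j$ (category is non-increasing under continuous deformations on the ambient space) to conclude $A'\in\Gamma_j$, hence $c_j\le\sup_{A'}\Psi\le c_j-\varepsilon$, a contradiction. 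So each $c_j$ is a critical value of $\Psi$ in $M_{c_0}$.

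If all the $c_j$ are distinct, we immediately obtain $k$ distinct critical points in $M_{c_0}$. The main subtlety is the degenerate case when several min--max values coincide, say $c_j=c_{j+1}=\cdots=c_{j+m}=:c$ with $m\ge 1$. Here I would argue by contradiction, assuming the critical set $K_c:=\{x\in M_{c_0}:\Psi(x)=c,\,\Psi'(x)=0\}$ is finite (or more generally has $\mathrm{cat}(K_c,M_{c_0})\le m$); $(PS)_c$ ensures $K_c$ is compact. Then one chooses a closed neighborhood $U$ of $K_c$ with $\mathrm{cat}(U,M_{c_0})=\mathrm{cat}(K_c,M_{c_0})\le m$ and deforms $\{\Psi\le c+\varepsilon\}\setminus U$ into $\{\Psi\le c-\varepsilon\}$; the subadditivity of category,
\[
\mathrm{cat}(A,M_{c_0})\le \mathrm{cat}(A\setminus U,M_{c_0})+\mathrm{cat}(\overline{U},M_{c_0}),
\]
together with choosing $A\in\Gamma_{j+m}$ with $\sup_A\Psi<c+\varepsilon$, yields a set of category $\ge j$ contained in $\{\Psi\le c-\varepsilon\}$, contradicting the definition of $c_j$. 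Hence $\mathrm{cat}(K_c,M_{c_0})\ge m+1$, which forces $K_c$ to contain at least $m+1$ distinct points. Summing contributions across all coincidence blocks produces at least $k$ critical points of $\Psi$ in $M_{c_0}$.

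The main obstacle I anticipate is carrying out the deformation lemma cleanly on the Hilbert manifold $M$ rather than on a linear space: one needs a pseudogradient vector field tangent to $M$ and a suitable locally Lipschitz cutoff so that the flow stays in $M$ and in the sublevel set $M_{c_0}$. Once that standard tool is available, the rest is a bookkeeping argument with category. For the present paper this proposition is invoked as a black box, so I would simply cite Ambrosetti and the references already used (e.g.\ \cite{Ambro,Adachi}) rather than reproduce the technical deformation construction.
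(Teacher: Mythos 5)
The paper does not prove this proposition at all: it is quoted verbatim as a known black-box result, with references to Ambrosetti's memoir \cite{Ambro} and to \cite[Proposition~2.4]{Adachi}. Your write-up is precisely the classical Lusternik--Schnirelman min--max argument that those sources contain (min--max values $c_j$ over families of sets of category $\geq j$, a deformation lemma built from a pseudogradient flow tangent to the Hilbert manifold, invariance of category under deformations, and the refinement $\mathrm{cat}(K_c,M_{c_0})\geq m+1$ when $m+1$ consecutive levels coincide), so there is nothing to compare against on the paper's side; your argument is the standard and correct one, and for the purposes of this paper citing it, as you suggest at the end, is exactly what the authors do. The one place where your reasoning is not sound as written is the claim that $\Psi$ is bounded below on $M_{c_0}$ ``by its continuity'' together with the parenthetical about the $(PS)_c$ hypothesis being vacuous: continuity gives no lower bound, and if $\Psi$ is unbounded below on $M_{c_0}$ the values $c_j$ can equal $-\infty$ and the statement can genuinely fail (take $M=\R$, $\Psi(x)=x$: every $(PS)_c$ condition holds vacuously, the sublevel set is contractible so has category $1$, yet there are no critical points). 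Boundedness below is an implicit standing hypothesis of the classical theorem that the paper's statement also omits; in the application it is supplied by Lemma \ref{ATl1.2}(iii), which gives $\inf_{\tilde\Si_+}J_{a,f}>0$. You should state it as an assumption rather than derive it. The remaining bookkeeping (subadditivity of category after excising a closed neighbourhood $U$ of $K_c$, with the closure of $A\setminus U$ in the subadditivity inequality) is correct.
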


\begin{lemma}\lab{l:15-8-2} \textup{(\cite[Lemma~2.5]{Adachi})}
	Let $N\geq 1$ and $M$ be a topological space and $S^{N-1}$ denote the unit sphere in $\Rn$. Suppose the there exists two continuous mapping 
	$$F:S^{N-1}\to M,\quad G:M\to S^{N-1},$$
	such that $G\circ F$ is homotopic to the identity map $Id:S^{N-1}\to S^{N-1}$, namely there is continuous map $\eta:[0,\;1]\times S^{N-1}\to S^{N-1}$ such that 
	\begin{align}
	\eta(0,\;x) &= (G\circ F)(x)\;\text{ for all } x\in S^{N-1}\notag\\
	\eta(1,\;x) &= x\;\text{ for all }x\in S^{N-1}.\notag
	\end{align}
	Then $cat(M)\geq 2$.
\end{lemma}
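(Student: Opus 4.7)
The plan is to argue by contradiction. Suppose $\mathrm{cat}(M) \leq 1$. Since $F : S^{N-1} \to M$ is defined and $S^{N-1}$ is nonempty for $N \geq 1$, the space $M$ itself is nonempty, so $\mathrm{cat}(M) \neq 0$. Hence $\mathrm{cat}(M) = 1$, which by definition means that $M$ is covered by a single closed subset $A \subseteq M$ with $A$ contractible in $M$. Necessarily $A = M$, so the identity $\mathrm{id}_M : M \hookrightarrow M$ is homotopic to a constant map; that is, there exist $p \in M$ and a continuous map $H : [0,1] \times M \to M$ with $H(0, x) = x$ and $H(1, x) = p$ for all $x \in M$.

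Next, I would transport this null-homotopy back to the sphere via $F$ and $G$. Define $\widetilde{H} : [0,1] \times S^{N-1} \to S^{N-1}$ by
\[
\widetilde{H}(t, x) \, := \, G\bigl( H(t, F(x)) \bigr).
\]
This is continuous as a composition of continuous maps. At $t = 0$, one has $\widetilde{H}(0, x) = G(F(x)) = (G \circ F)(x)$, while at $t = 1$, $\widetilde{H}(1, x) = G(p)$ is the constant map at $G(p) \in S^{N-1}$. Therefore $G \circ F$ is null-homotopic in $S^{N-1}$.

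Combining this with the hypothesis that $G \circ F$ is homotopic to $\mathrm{id}_{S^{N-1}}$, and using transitivity of the homotopy relation, I conclude that $\mathrm{id}_{S^{N-1}}$ is null-homotopic, i.e., $S^{N-1}$ is contractible. This is the contradiction I aim for: for $N = 1$, the sphere $S^0$ consists of two disconnected points, so its identity cannot be homotopic to a constant (a homotopy would have path-connected image); for $N \geq 2$, non-contractibility of $S^{N-1}$ is classical (for instance via $H_{N-1}(S^{N-1}) \cong \mathbb{Z}$, or Brouwer's theorem / degree theory). Either way, $\mathrm{cat}(M) \geq 2$, completing the proof.

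The main (and essentially only) obstacle is the invocation of the non-contractibility of $S^{N-1}$, which is a standard topological fact rather than a computation internal to the paper's analytic setting; the rest of the argument is a clean diagram-chase using the given maps $F$ and $G$.
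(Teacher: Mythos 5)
Your proof is correct and is the standard argument for this statement; the paper itself supplies no proof, deferring entirely to the cited reference (Adachi--Tanaka, Lemma 2.5), whose argument is the same: rule out $\mathrm{cat}(M)=0$ by nonemptiness, note that $\mathrm{cat}(M)=1$ forces $\mathrm{id}_M$ to be null-homotopic, transport that null-homotopy through $F$ and $G$ to make $G\circ F$ (hence $\mathrm{id}_{S^{N-1}}$) null-homotopic, and contradict the non-contractibility of $S^{N-1}$. All steps, including the separate treatment of the disconnected case $N=1$, are sound, so there is nothing to add.
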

In view of the above lemma, our next goal will be to construct two mappings:
\begin{align}
&F\;:\;S^{N-1}\to \[J_{a,f}\leq I_{a,f}(\ul)+I_{1,0}(w^*)-\varepsilon\],\notag\\
&G\;:\;\[J_{a,f}\leq I_{a,f}(\ul)+I_{1,0}(w^*)-\varepsilon\]\to S^{N-1},\notag
\end{align}
so that $G\circ F$ is homotopic to the identity.

\begin{proposition}\lab{p:12-8-1}
	Let $a$ be as in Theorem \ref{MT} and $d_2>0$ and $R_0>0$ be as found in Proposition \ref{ATP1.7} and Proposition \ref{main-energy-estimate} respectively. Then there exists $d_3\in(0,\;d_2]$ and $R_1>R_0$, such that for any $0<\|f\|_{H^{-s}(\Rn)}\leq d_3$ and for any $|y|\geq R_1$, there exists a unique $t=t(f,y)>0$ in a neighbourhood of $1$ satisfying
\begin{eqnarray}\lab{12-8-4}
\ul+tw^*(x-y) &=& t_{a,f}\bigg(\frac{\ul+tw^*(x-y)}{\|\ul+tw^*(x-y)\|_{\Hs}}\bigg)\\
&&\qquad\times \frac{\ul+tw^*(x-y)}{\|\ul+tw^*(x-y)\|_{\Hs}}. 
\end{eqnarray}	
 Moreover,
	$$\{y\in\Rn \;:\;|y|>R_1\}\to (0,\;\infty); \quad y\mapsto t(f,y)$$
	is continuous.
	Here $w^*$ is the unique ground state solution of \eqref{AT0.8}.
\end{proposition}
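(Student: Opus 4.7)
Set $u_0 := \ul$ and $w_y := w^*(\cdot - y)$. Since $u_0 \geq 0$ (Remark \ref{r:30-7-3}) and $w^* > 0$, the function $u := u_0 + tw_y$ is strictly positive for every $t > 0$, so $u_+ = u$. By Lemma~\ref{ATl1.3}, $t_{a,f}(v)$ is the unique $s > 0$ that maximizes $I_{a,f}(sv)$, hence equation \eqref{12-8-4} is equivalent to $\|u\|_{\Hs} = t_{a,f}\bigl(u/\|u\|_{\Hs}\bigr)$, which in turn is equivalent to
$$\Psi(t, f, y) \;:=\; I'_{a,f}(u_0 + tw_y)(u_0 + tw_y) \;=\; \|u_0 + tw_y\|^2_{\Hs} - \int_{\Rn} a(x)(u_0 + tw_y)^{p+1}\,dx - \prescript{}{H^{-s}}{\langle}f, u_0 + tw_y{\rangle}_{H^s} \;=\; 0,$$
provided the corresponding $\|u\|_{\Hs}$ is not the spurious small-$s$ critical point of Lemma~\ref{ATl1.3}(iii).

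\textbf{Limiting profile.} The strategy is to solve $\Psi(t,f,y)=0$ near $t=1$ by comparing with the limit as $\|f\|_{H^{-s}} \to 0$ and $|y| \to \infty$. First, $u_0 \to 0$ in $\Hs$ as $\|f\|_{H^{-s}} \to 0$: testing $(I_{a,f})'(u_0)=0$ against $u_0$ and using the Sobolev embedding with $r_1^{p-1} = S_1^{(p+1)/2}/p$ (from Proposition~\ref{ATl1.4}) gives
$$\|u_0\|^2_{\Hs} \;\leq\; \tfrac{1}{p}\|u_0\|^2_{\Hs} + \|f\|_{H^{-s}}\|u_0\|_{\Hs}, \qquad \text{hence } \|u_0\|_{\Hs} \;\leq\; \tfrac{p}{p-1}\|f\|_{H^{-s}}.$$
Next, as $|y|\to\infty$, $w_y \rightharpoonup 0$ in $\Hs$, and Vitali's theorem (with integrands $au_0^k w_y^{p+1-k}$ going to zero pointwise and dominated by $|u_0|^{p+1}+|w_y|^{p+1}$) yields $\int a u_0^k w_y^{p+1-k}\,dx \to 0$ for $1 \leq k \leq p$; the substitution $x \mapsto x+y$ together with $a(x)\to 1$ gives $\int a w_y^{p+1}\,dx \to \int (w^*)^{p+1}\,dx$; and $\prescript{}{H^{-s}}{\langle}f, w_y{\rangle}_{H^s}\to 0$. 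Using $\|w^*\|_{\Hs}^2 = \int (w^*)^{p+1}\,dx$ (from \eqref{AT0.8}), I obtain
$$\Psi(t, f, y) \;\longrightarrow\; \Psi_\infty(t) \;:=\; t^{2}(1 - t^{p-1})\|w^*\|^2_{\Hs} \quad \text{as } \|f\|_{H^{-s}}\to 0 \text{ and } |y|\to\infty,$$
uniformly in $t$ on compact subsets of $(0,\infty)$. An identical computation shows $\partial_t\Psi(t,f,y) \to \Psi'_\infty(t) = \|w^*\|^2_{\Hs}\bigl(2t - (p+1)t^p\bigr)$ uniformly on compacts as well.

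\textbf{Uniform monotonicity, uniqueness and continuity.} Since $\Psi_\infty(1)=0$ and $\Psi'_\infty(1) = -(p-1)\|w^*\|^2_{\Hs} < 0$, I fix $\eta \in (0,1/2)$ such that $\Psi_\infty(1-\eta) > 0 > \Psi_\infty(1+\eta)$ and $\Psi'_\infty \leq -\tfrac{p-1}{2}\|w^*\|^2_{\Hs}$ on $[1-\eta,1+\eta]$. By the uniform convergence above, there exist $d_3 \in (0, d_2]$ and $R_1 > R_0$ such that for all $\|f\|_{H^{-s}} \leq d_3$ and $|y| \geq R_1$, both $\Psi(1-\eta,f,y) > 0 > \Psi(1+\eta,f,y)$ and $\partial_t\Psi(t,f,y) \leq -\tfrac{p-1}{4}\|w^*\|^2_{\Hs}$ hold on $[1-\eta,1+\eta]$. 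Strict monotonicity plus the intermediate value theorem then yield a unique zero $t(f,y) \in (1-\eta,1+\eta)$; shrinking $d_3$ further so that $(1-\eta)\|w^*\|_{\Hs} - \tfrac{p}{p-1}d_3 > (1-1/p)^{-1}d_3$ guarantees $\|u_0 + t(f,y)w_y\|_{\Hs}$ exceeds the small-$s$ critical branch of Lemma~\ref{ATl1.3}(iii), so the zero corresponds to $t_{a,f}(\cdot)$. Continuity of $y \mapsto t(f,y)$ on $\{|y|>R_1\}$ follows from the continuity of $w_y$ in $\Hs$ (dominated convergence on the Gagliardo integral), which makes $\Psi(\cdot,f,\cdot)$ jointly continuous, combined with the uniform lower bound $|\partial_t\Psi| \geq \tfrac{p-1}{4}\|w^*\|^2_{\Hs}$.

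\textbf{Main obstacle.} The delicate step is the uniform convergence $\Psi, \partial_t\Psi \to \Psi_\infty, \Psi'_\infty$ on an $\eta$-neighbourhood of $1$. Because $w^*$ has only polynomial decay $|x|^{-(N+2s)}$, one cannot use the exponential-decay shortcuts available in the local case; instead, the argument rests on the linear estimate $\|u_0\|_{\Hs} \leq \tfrac{p}{p-1}\|f\|_{H^{-s}}$, the weak convergence $w_y \rightharpoonup 0$, and Vitali's theorem to control all mixed integrals $\int a u_0^k w_y^{p+1-k}\,dx$.
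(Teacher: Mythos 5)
Your proof is correct, and it is essentially the argument the paper intends: the paper itself omits the details, deferring to the implicit function theorem and \cite[Proposition 2.6]{Adachi}, and your reduction of \eqref{12-8-4} to the scalar equation $I'_{a,f}(\ul+tw^*(\cdot-y))(\ul+tw^*(\cdot-y))=0$, followed by uniform convergence to the limiting profile $t^2(1-t^{p-1})\|w^*\|^2_{\Hs}$ and a monotonicity/intermediate-value argument, is a faithful quantitative version of that route (and you correctly handle the two points that are easy to overlook: excluding the spurious small critical point of Lemma \ref{ATl1.3}(iii), and the a priori bound $\|\ul\|_{\Hs}\leq\tfrac{p}{p-1}\|f\|_{H^{-s}(\Rn)}$). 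One cosmetic caveat: since $p$ need not be an integer, the "mixed terms" $\int a\,u_0^k w_y^{p+1-k}$ should not be introduced via a binomial expansion but via the elementary inequalities $|(\sigma+\tau)^{p+1}-\sigma^{p+1}-\tau^{p+1}|\leq C(\sigma^p\tau+\sigma\tau^p)$ and $|(\sigma+\tau)^p-\tau^p|\leq C(\sigma^p+\sigma\tau^{p-1})$ for $\sigma,\tau\geq 0$, after which your H\"older bounds go through unchanged.
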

\begin{proof}
	 Using implicit function theorem, the proof follows exactly in the same spirit of \cite[Proposition 2.6]{Adachi}. We skip the details.
\end{proof}

\vspace{2mm}

Let us define $F_{R}:S^{N-1}\to\tilde\Sigma_{+}$ in the following way:
$$F_{R}(y) = \frac{\ul+t(f,Ry)w^*(x-Ry)}{\|\ul+t(f,Ry)w^*(x-Ry)\|_{\Hs}},$$
for $\|f\|_{H^{-s}(\Rn)}\leq d_3$ and $R\geq R_1$. 

In Proposition \ref{main-energy-estimate}, we have noticed that for $|y|\geq R_0$, \eqref{main-AT2.2} holds for all $t\geq 0$. For $|y|\geq R_0$, we choose $t=t(f,y)$ such that \eqref{12-8-4} holds.

Therefore,
\Bea J_{a,f}\bigg(\frac{\ul+tw^*(x-y)}{\|\ul+tw^*(x-y)\|_{\Hs}}\bigg) &=& I_{a,f}(\ul+tw^*(x-y))\\
&<&I_{a,f}(\ul)+I_{1,0}(w^*).\Eea

\begin{proposition}\textup{(\cite[Proposition~2.7]{Adachi})}
Let $d_3$ and $R_1$ be as found in Proposition \ref{p:12-8-1}. Then, for $0<\|f\|_{H^{-s}(\Rn)}\leq d_3$ and $R\geq R_1$, there exsits $\varepsilon_{0}=\varepsilon_0(R)>0$ such that
	$$ F_{R}(S^{N-1})\subseteq \[J_{a,f}\leq I_{a,f}(\ul)+I_{1,0}(w^*)-\varepsilon_{0}(R)\],$$
	where the notation $\[J_{a,f}\leq c\]$ is meant in the sense of \eqref{12-8-5}.
\end{proposition}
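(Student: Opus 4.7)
The plan is to exploit the pointwise strict inequality already noted just before the statement, together with compactness of $S^{N-1}$, to extract a uniform gap $\varepsilon_0(R)>0$.

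First I would fix $R\geq R_1$ and $0<\|f\|_{H^{-s}(\R^N)}\leq d_3$, and define the auxiliary function $h_R : S^{N-1}\to \R$ by
\[
h_R(\omega) \; := \; I_{a,f}(\ul) + I_{1,0}(w^*) \; - \; I_{a,f}\bigl(\ul + t(f,R\omega)\,w^*(\cdot - R\omega)\bigr).
\]
The crux of the argument is to show that $h_R$ is continuous on $S^{N-1}$ and strictly positive everywhere; then by compactness the minimum $\varepsilon_0(R) := \min_{\omega\in S^{N-1}} h_R(\omega)$ is strictly positive, and the conclusion follows once we identify $J_{a,f}(F_R(\omega))$ with $I_{a,f}(\ul + t(f,R\omega)w^*(\cdot-R\omega))$.

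That identification comes directly from the defining property \eqref{12-8-4} of $t(f,R\omega)$: since the pair $(t_{a,f}(\cdot), J_{a,f}(\cdot))$ was characterized (Lemma \ref{ATl1.3} and definition \eqref{5-8-3}) so that
\[
J_{a,f}\!\left(\tfrac{u}{\|u\|_{H^s}}\right) \; = \; I_{a,f}(u) \qquad \text{whenever } u = t_{a,f}\!\bigl(\tfrac{u}{\|u\|_{H^s}}\bigr)\tfrac{u}{\|u\|_{H^s}},
\]
the choice of $t(f,R\omega)$ guarantees that $J_{a,f}(F_R(\omega)) = I_{a,f}\bigl(\ul + t(f,R\omega)\,w^*(\cdot - R\omega)\bigr)$. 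Strict positivity of $h_R$ is then immediate from Proposition \ref{main-energy-estimate}, since $|R\omega|=R\geq R_1>R_0$ and $t(f,R\omega)>0$.

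For continuity, I would observe that $\omega\mapsto R\omega$ is continuous, $y\mapsto t(f,y)$ is continuous on $\{|y|\geq R_1\}$ by Proposition \ref{p:12-8-1}, and translation $y\mapsto w^*(\cdot - y)$ is continuous into $H^s(\R^N)$ (by density of $C_c^\infty$ and uniform continuity of $L^2$ and Gagliardo translations on a fixed profile). Composing with the continuous functional $I_{a,f}:H^s(\R^N)\to\R$ yields continuity of $h_R$. The main (mild) obstacle is justifying the $H^s$-continuity of the translation map for the fixed profile $w^*$, but this is standard given the decay \eqref{9-7-1} and smoothness of $w^*$. Setting $\varepsilon_0(R) := \min_{S^{N-1}} h_R > 0$ yields $F_R(S^{N-1})\subseteq \[J_{a,f}\leq I_{a,f}(\ul)+I_{1,0}(w^*)-\varepsilon_0(R)\]$, as required.
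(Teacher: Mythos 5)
Your proposal is correct and follows essentially the same route as the paper: the paper notes (just before the statement) that $J_{a,f}(F_R(y))=I_{a,f}(\ul+t(f,Ry)w^*(\cdot-Ry))<I_{a,f}(\ul)+I_{1,0}(w^*)$ by Proposition \ref{main-energy-estimate}, and then obtains $\varepsilon_0(R)$ from compactness of $F_R(S^{N-1})$ exactly as you do via the minimum of your gap function $h_R$. Your write-up merely makes explicit the continuity ingredients (continuity of $y\mapsto t(f,y)$ and of translation in $H^s$) that the paper leaves implicit.
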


\begin{proof}
	By construction, we have,
	$$F_{R}(S^{N-1}) \subseteq \[J_{a,f}< I_{a,f}(\ul)+I_{1,0}(w^*)\].$$
	Since, $F(S^{N-1})$ is compact, the conclusion holds.
\end{proof}
Thus we construct a mapping
$$F_R : S^{N-1}\to  \[J_{a,f}\leq I_{a,f}(\ul)+I_{1,0}(w^*)-\varepsilon_{0}(R)\]$$
Now we will construct $G$. For the construction of $G$ the following lemma is important.
\begin{lemma}\lab{l:15-8-1}
	There exists $d_4\in(0,\;d_3]$ such that if $\|f\|_{H^{-s}(\Rn)}\leq d_4$, then
	\be\label{AT2.7}
	\[J_{a,f}< I_{a,f}(\ul)+I_{1,0}(w^*)\] \subseteq \[J_{a,0}<I_{1,0}(w^*)+\delta_{0}\]
	\ee
	where $\delta_{0}>0$ is given in lemma \eqref{ATL1.12}.
\end{lemma}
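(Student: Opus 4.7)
\begin{pf}[Proof plan for Lemma \ref{l:15-8-1}]
The plan is to exploit the quantitative two-sided comparison between $J_{a,f}$ and $J_{a,0}$ provided by Lemma \ref{ATl1.2}(ii), together with the fact that as $\|f\|_{H^{-s}(\Rn)}\to 0$ the quantity $I_{a,f}(\ul)$ tends to $0$. Fix an arbitrary $v\in\tilde\Si_+$ with $J_{a,f}(v)<I_{a,f}(\ul)+I_{1,0}(w^*)$. The lower bound in \eqref{AT1.13} gives, for every $\varepsilon\in(0,1)$,
\[
(1-\varepsilon)^{\tfrac{p+1}{p-1}}J_{a,0}(v)\le J_{a,f}(v)+\tfrac{1}{2\varepsilon}\|f\|^2_{H^{-s}(\Rn)}<I_{a,f}(\ul)+I_{1,0}(w^*)+\tfrac{1}{2\varepsilon}\|f\|^2_{H^{-s}(\Rn)}.
\]
Writing $(1-\varepsilon)^{-(p+1)/(p-1)}=1+\alpha(\varepsilon)$ with $\alpha(\varepsilon)\to 0^+$ as $\varepsilon\to 0^+$, this yields
\[
J_{a,0}(v)\le I_{1,0}(w^*)+\alpha(\varepsilon)I_{1,0}(w^*)+(1+\alpha(\varepsilon))\Bigl(I_{a,f}(\ul)+\tfrac{1}{2\varepsilon}\|f\|^2_{H^{-s}(\Rn)}\Bigr).
\]
Since $I_{a,f}(\ul)\le 0$ by \eqref{2-9-1}, the last parenthesis is bounded above by $\frac{1}{2\varepsilon}\|f\|^2_{H^{-s}(\Rn)}$, so it suffices to ensure
\[
\alpha(\varepsilon)I_{1,0}(w^*)+(1+\alpha(\varepsilon))\tfrac{1}{2\varepsilon}\|f\|^2_{H^{-s}(\Rn)}<\delta_0.
\]

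The execution is now in two independent steps. First, with $\delta_0$ the constant produced by Lemma \ref{ATL1.12}, I fix $\varepsilon=\varepsilon(\delta_0)\in(0,1)$ small enough that $\alpha(\varepsilon)I_{1,0}(w^*)<\delta_0/2$; this choice depends only on $p$, $s$, $N$ and the fixed ground state $w^*$. Second, with $\varepsilon$ now frozen, I pick $d_4\in(0,d_3]$ so small that
\[
(1+\alpha(\varepsilon))\tfrac{d_4^2}{2\varepsilon}<\tfrac{\delta_0}{2},
\]
which is an elementary algebraic requirement on $d_4$. Combining the two choices gives $J_{a,0}(v)<I_{1,0}(w^*)+\delta_0$ for every $v$ in the level set on the left-hand side of \eqref{AT2.7}, whenever $\|f\|_{H^{-s}(\Rn)}\le d_4$. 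Since $v$ was arbitrary, the asserted inclusion follows.

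The only subtlety—and the main obstacle to be careful about—is ensuring that the term $I_{a,f}(\ul)$ does not inflate the estimate: even though it depends on $f$, Proposition \ref{ATl1.4} guarantees $\ul\in B(r_1)$ and $I_{a,f}(\ul)\le 0$ for all $\|f\|_{H^{-s}(\Rn)}\le d_2$, so it can be dropped from the upper bound rather than needing an explicit continuity estimate of the form $I_{a,f}(\ul)\to 0$ as $f\to 0$. This is why the two-step choice of $\varepsilon$ first and $d_4$ afterwards succeeds, and it is what allows $d_4$ to be chosen depending only on $\delta_0$, $p$, $s$, $N$, and the norm of $w^*$.
\end{pf}
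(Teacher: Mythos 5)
Your proposal is correct and follows essentially the same route as the paper: both rest on the comparison inequality \eqref{AT1.13} of Lemma \ref{ATl1.2}(ii) together with $I_{a,f}(\ul)<0$ from \eqref{2-9-1}, so that $J_{a,f}(v)<I_{1,0}(w^*)$ on the left-hand level set and hence $J_{a,0}(v)\leq (1-\varepsilon)^{-\frac{p+1}{p-1}}\bigl(I_{1,0}(w^*)+\tfrac{1}{2\varepsilon}\|f\|_{H^{-s}(\Rn)}^2\bigr)$. Your two-step choice (first $\varepsilon$ depending on $\delta_0$ and $I_{1,0}(w^*)$, then $d_4$) simply makes explicit the quantifier order that the paper compresses into ``since $\varepsilon$ is arbitrary \ldots for sufficiently small $\|f\|_{H^{-s}(\Rn)}$.''
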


\begin{proof}
From \eqref{AT1.13}, we have for any $\varepsilon \in (0,1)$
	\be\label{AT2.8}
	J_{a,0}(v) \leq (1-\varepsilon)^{-\tfrac{p+1}{p-1}}\bigg(\Jaf + \frac{1}{2\varepsilon}\|f\|_{H^{-s}(\Rn)}^2\bigg)\;\text{for all } v\in \tilde\Sigma_{+}
	\ee
From \eqref{14-8-1}, we also have $I_{a,f}(\ul)<0$. 

Therefore, if
$v\in \[J_{a,f}< I_{a,f}(\ul) +I_{1,0}(w^*)\]$ then  $\Jaf < I_{1,0}(w^*)$. Consequently, from \eqref{AT2.8}, we have
	$$J_{a,0}(v)\leq (1-\varepsilon)^{-\tfrac{p+1}{p-1}}\bigg(I_{1,0}(w^*)+\frac{1}{2\varepsilon}\|f\|_{H^{-s}(\Rn)}^2\bigg)$$
for all $v\in\[J_{a,\;f}\leq I_{a,f}(\ul)+I_{1,0}(w^*)\]$.
	Since $\varepsilon \in (0,1)$ is arbitrary, we have 
	$$v\in \[J_{a,0}<I_{1,0}(w^*)+\delta_{0}\] \text{ for sufficiently small   }\|f\|_{H^{-s}(\Rn)}.$$
Hence the lemma follows.
\end{proof}
Now we can define, $G: \[J_{a,f}<I_{a,f}(\ul)+I_{1;0}(w^*)\]\to S^{N-1}$ by

$$G(v):=\dfrac{\displaystyle\int_{\Rn} \tfrac{x}{|x|}|v|^{p+1}{\rm d}x}{\bigg|\displaystyle\int_{\Rn} \tfrac{x}{|x|}|v|^{p+1}{\rm d}x\bigg|},$$
which is well defined thanks to Lemma \ref{ATL1.12} and Lemma \ref{l:15-8-1}.
Moreover, we will prove that these constructions $F$ and $G$ serves our purpose.
\begin{proposition}\lab{l:15-8-3}
	For a sufficiently large $R\geq R_1$ and for sufficiently small $\|f\|_{H^{-s}(\Rn)}>0$,
	$$G\circ F_R:S^{N-1}\to S^{N-1}$$
	is homotopic to identity.
\end{proposition}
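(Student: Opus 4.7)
The plan is to show that for $R$ chosen sufficiently large (after fixing $\fhs$ sufficiently small), the composition $G\circ F_R$ is so close to the identity that it satisfies the non-antipodal condition $(G\circ F_R)(y)\neq -y$ for every $y\in S^{N-1}$. Under this condition, the straight-line homotopy followed by radial projection,
$$\eta(\theta,y):=\frac{(1-\theta)(G\circ F_R)(y)+\theta y}{\big|(1-\theta)(G\circ F_R)(y)+\theta y\big|},\qquad (\theta,y)\in[0,1]\times S^{N-1},$$
is continuous and well-defined, and delivers the required homotopy between $G\circ F_R$ and the identity map on $S^{N-1}$.

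The heart of the argument is therefore to prove the uniform convergence $(G\circ F_R)(y)\to y$ on $S^{N-1}$ as $R\to\infty$. Setting $u_0:=\ul$ and $t_R:=t(f,Ry)$, the $H^s$-normalization constant in $F_R(y)$ cancels between the numerator and denominator of $G$, so
$$G(F_R(y))=\frac{\displaystyle\int_{\Rn}\tfrac{x}{|x|}\,|u_0+t_R w^*(x-Ry)|^{p+1}\,dx}{\Big|\displaystyle\int_{\Rn}\tfrac{x}{|x|}\,|u_0+t_R w^*(x-Ry)|^{p+1}\,dx\Big|}.$$
Expanding the integrand and changing variables $x\mapsto x+Ry$ in the dominant piece, the leading term becomes
$$t_R^{p+1}\int_{\Rn}\tfrac{x+Ry}{|x+Ry|}\,w^*(x)^{p+1}\,dx = t_R^{p+1}\int_{\Rn}\tfrac{x/R+y}{|x/R+y|}\,w^*(x)^{p+1}\,dx.$$
Since $\tfrac{x/R+y}{|x/R+y|}\to y$ uniformly for $x$ in any compact set and $y\in S^{N-1}$, while the tail is controlled via the polynomial decay \eqref{9-7-1} of $w^*$ together with the $L^{p+1}$ integrability of $w^{*\,p+1}$, dominated convergence gives this integral converging to $\|w^*\|_{L^{p+1}(\Rn)}^{p+1}\,y$ uniformly in $y$. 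By Proposition \ref{p:12-8-1}, $t_R$ stays in a bounded neighbourhood of $1$ uniformly in $y$, so the leading term remains non-degenerate.

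The remaining pieces from the binomial expansion of $|u_0+t_R w^*(x-Ry)|^{p+1}$ are the $u_0^{p+1}$-term (a fixed bounded vector independent of $R$) and cross terms like $u_0(w^*(\cdot-Ry))^p$ and $u_0^p w^*(\cdot-Ry)$; the latter tend to zero as $R\to\infty$ because $u_0$ and $w^*(\cdot-Ry)$ become asymptotically disjointly supported, quantified by the polynomial tail of $w^*$. All these contributions are of lower order relative to the leading term, so the numerator in $G(F_R(y))$ is asymptotically a nonzero positive multiple of $y$, and the quotient therefore converges to $y$, uniformly in $y\in S^{N-1}$. The main obstacle in executing the plan is maintaining all estimates uniform in $y$; however, compactness of $S^{N-1}$ combined with the radial symmetry and polynomial decay of $w^*$ reduces this to careful bookkeeping of tails and cross terms. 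Once $\|(G\circ F_R)(y)-y\|<2$ holds uniformly for $R$ sufficiently large, non-antipodality is secured and the proof is completed via the homotopy $\eta$ above.
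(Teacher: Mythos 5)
Your overall strategy (show $G\circ F_R$ never hits the antipode of $y$, then contract along great circles / normalized straight lines) is a legitimate route to the statement; the paper itself omits the proof and defers to Adachi--Tanaka, whose argument is an explicit two-stage homotopy in the same spirit. However, there is a concrete gap in your key estimate. After cancelling the normalization, the numerator of $G(F_R(y))$ is
$\int_{\Rn}\tfrac{x}{|x|}\,|u_0+t_Rw^*(x-Ry)|^{p+1}\,{\rm d}x$, and you correctly identify that the pure $w^*$ piece tends to $t_R^{p+1}\|w^*\|_{L^{p+1}(\Rn)}^{p+1}\,y$ and the cross terms vanish as $R\to\infty$. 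But the term $v_0:=\int_{\Rn}\tfrac{x}{|x|}\,u_0^{p+1}\,{\rm d}x$ is a \emph{fixed nonzero vector independent of $R$}; it is not ``of lower order relative to the leading term'' in $R$, and consequently your claim that $(G\circ F_R)(y)\to y$ uniformly is false for a fixed $f$. The true limit is $\tfrac{cy+v_0}{|cy+v_0|}$ with $c\approx\|w^*\|_{L^{p+1}(\Rn)}^{p+1}$, and if $|v_0|>c$ and $v_0$ points opposite to some $y_*\in S^{N-1}$, then $G\circ F_R(y_*)$ converges to $-y_*$ and your non-antipodality condition genuinely fails. In other words, sending $R\to\infty$ alone cannot close the argument.

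The repair is to use the hypothesis that $\|f\|_{H^{-s}(\Rn)}$ is small, which your proof never invokes quantitatively. From \eqref{2-9-1} one has $I_{a,f}(\ul)<0$, and the convexity estimate in the proof of Proposition \ref{ATl1.4} gives
$\bigl(\tfrac12-\tfrac{1}{p(p+1)}\bigr)\|\ul\|_{\Hs}^2\leq \|f\|_{H^{-s}(\Rn)}\|\ul\|_{\Hs}$, hence $\|u_0\|_{\Hs}\leq C\|f\|_{H^{-s}(\Rn)}$ and therefore $|v_0|\leq\|u_0\|_{L^{p+1}(\Rn)}^{p+1}\leq C\|f\|_{H^{-s}(\Rn)}^{p+1}$. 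Choosing $\|f\|_{H^{-s}(\Rn)}$ small first (so that $|v_0|<\tfrac12\,c\,\inf_y t_R^{p+1}$, using that $t_R$ stays near $1$ by Proposition \ref{p:12-8-1}) and then $R$ large, the numerator is $cy+O(|v_0|)+o_R(1)$, which is never a nonpositive multiple of $y$; non-antipodality follows and your homotopy $\eta$ is well defined. With this ordering of quantifiers made explicit (small $f$ first, then large $R$) and the false ``lower order'' claim replaced by the smallness of $u_0$, the proof goes through. You should also record, as the paper does via Lemmas \ref{ATL1.12} and \ref{l:15-8-1}, that $G$ is well defined on $F_R(S^{N-1})$, i.e.\ the denominator of $G$ does not vanish there.
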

\begin{proof}
This proof follows in the same spirit as in \cite[Proposition~2.4]{Adachi}. We skip the details. 
\end{proof}

We are now in a position to state our main result in this subsection:
\begin{proposition}\lab{p:15-8-1}
	For sufficiently large $R\geq R_1$,
	$$cat\bigg( \[J_{a,f}<I_{a,f}(\ul)+I_{1,\;0}(w^*)-\varepsilon_{0}(R)\] \bigg)\geq 2.$$
\end{proposition}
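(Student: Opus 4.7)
The proof will be a direct application of Lemma \ref{l:15-8-2} with $M := \[J_{a,f}< I_{a,f}(\ul)+I_{1,0}(w^*)-\varepsilon_{0}(R)\]$, using the two continuous maps $F_R$ and $G$ that were constructed just before this Proposition. So the plan is essentially to verify that the hypotheses of Lemma \ref{l:15-8-2} are in place, once $R$ is large enough and $\|f\|_{H^{-s}(\Rn)}$ is small enough.

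First I would fix the parameters. Choose $R \geq R_1$ sufficiently large and $\|f\|_{H^{-s}(\Rn)}$ sufficiently small (in particular $\|f\|_{H^{-s}(\Rn)} \leq d_4$, where $d_4$ is as in Lemma \ref{l:15-8-1}), so that simultaneously: Proposition \ref{p:12-8-1} gives a well-defined continuous $y \mapsto t(f,y)$ for $|y| \geq R_1$, and hence the map
\[
F_R: S^{N-1}\to \[J_{a,f}\leq I_{a,f}(\ul)+I_{1,0}(w^*)-\varepsilon_{0}(R)\]
\]
is continuous and well defined; Lemma \ref{l:15-8-1} guarantees the inclusion $\[J_{a,f}< I_{a,f}(\ul)+I_{1,0}(w^*)\] \subseteq \[J_{a,0}<I_{1,0}(w^*)+\delta_{0}\]$, so that together with Lemma \ref{ATL1.12} the center-of-mass integral $\int_{\Rn}\frac{x}{|x|}|v|^{p+1}\,{\rm d}x$ is nonzero on this sub-level set, making
\[
G(v):=\dfrac{\displaystyle\int_{\Rn} \tfrac{x}{|x|}|v|^{p+1}{\rm d}x}{\bigg|\displaystyle\int_{\Rn} \tfrac{x}{|x|}|v|^{p+1}{\rm d}x\bigg|}
\]
a well-defined continuous map $G:\[J_{a,f}< I_{a,f}(\ul)+I_{1,0}(w^*)\]\to S^{N-1}$; and Proposition \ref{l:15-8-3} ensures that $G\circ F_R: S^{N-1}\to S^{N-1}$ is homotopic to the identity map.

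Next I would observe that $M\subseteq \[J_{a,f}< I_{a,f}(\ul)+I_{1,0}(w^*)\]$, so that the composition $G\circ F_R$ makes sense exactly as a self-map of $S^{N-1}$ that factors through $M$. At this point every ingredient of Lemma \ref{l:15-8-2} is verified, taking $M$ as above: a continuous map $F_R:S^{N-1}\to M$, a continuous map $G:M\to S^{N-1}$, and a homotopy between $G\circ F_R$ and $\mathrm{Id}_{S^{N-1}}$. Applying Lemma \ref{l:15-8-2} directly yields $\mathrm{cat}(M)\geq 2$, which is the desired conclusion.

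Since the construction of $F_R$, $G$, and the homotopy have all been established in the preceding propositions, there is no real obstacle left in the present statement; it is purely an assembly step. The only mild bookkeeping point is to choose the thresholds on $R$ and $\|f\|_{H^{-s}(\Rn)}$ so that the domains and codomains of $F_R$ and $G$ match up correctly (i.e.\ the sub-level set $M$ sits inside the domain of $G$, and $F_R$ lands in $M$); both are automatic from the earlier estimates $\varepsilon_0(R)>0$ and Lemma \ref{l:15-8-1}. The hardest underlying step was really the energy estimate in Proposition \ref{main-energy-estimate} (which is what forces $\varepsilon_0(R)>0$), but that work is already done.
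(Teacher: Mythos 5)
Your proposal is correct and matches the paper's own (very terse) proof exactly: the paper simply combines Lemma \ref{l:15-8-2} with Proposition \ref{l:15-8-3}, using the previously constructed maps $F_R$ and $G$, and you have spelled out the same assembly with the hypotheses duly checked. No further comment is needed.
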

\begin{proof}
Combining Lemma \ref{l:15-8-2} and Proposition \ref{l:15-8-3}, this proof follows. 
\end{proof}
The above proposition led us to the following multiplicity results.

\begin{theorem}\lab{t:2nd-3rd sol}
	Let $a$ be as in Theorem \ref{MT}. Then there exists $d_5>0$ such that if $\|f\|_{H^{-s}(\Rn)}\leq d_5$ and $f$ is nonnegative nontrivial functional in $H^{-s}(\Rn)$, then $J_{a,f}$ has at least two critical points in
	$$\[J_{a,f}<I_{a,f}(\ul)+I_{1,0}(w^*)\].$$
\end{theorem}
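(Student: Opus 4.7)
The plan is to apply the abstract Lusternik-Schnirelman category principle of Proposition \ref{p:15-8-3} to the modified functional $J_{a,f}$ on the Hilbert manifold $\tilde\Sigma_+$; all the delicate analysis has already been compressed into the category estimate of Proposition \ref{p:15-8-1} and the Palais-Smale characterization of Corollary \ref{corr-ps}, so what remains is essentially to assemble these pieces.

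First, I would choose $d_5 := \min\{d_3, d_4\}$, where $d_3$ is the threshold from Proposition \ref{p:12-8-1} (so that the map $F_R$ is well defined) and $d_4$ is the threshold from Lemma \ref{l:15-8-1} (so that the sublevel $[J_{a,f}<I_{a,f}(\ul)+I_{1,0}(w^*)]$ sits inside $[J_{a,0}<I_{1,0}(w^*)+\delta_{0}]$, which in turn guarantees that the center-of-mass map $G$ is well defined on it). Assume $\fhs \le d_5$ and $f$ nonnegative nontrivial. Next, fix $R\ge R_1$ large enough for Proposition \ref{l:15-8-3} to apply, and let $\varepsilon_0(R)>0$ be as provided by Proposition \ref{p:15-8-1}.

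With these choices, Proposition \ref{p:15-8-1} produces the category bound
$$ cat\bigl([J_{a,f}\le I_{a,f}(\ul)+I_{1,0}(w^*)-\varepsilon_0(R)]\bigr)\ge 2,$$
while Corollary \ref{corr-ps} guarantees that $J_{a,f}$ satisfies $(PS)_c$ for every level $c<I_{a,f}(\ul)+I_{1,0}(w^*)$, in particular at every level $c\le c_0:=I_{a,f}(\ul)+I_{1,0}(w^*)-\varepsilon_0(R)$. Since $J_{a,f}\in C^1(\tilde\Sigma_+,\mathbb{R})$ by Proposition \ref{ATP1.7}(i) and $\tilde\Sigma_+$ is a $C^1$ Hilbert manifold, I can invoke Proposition \ref{p:15-8-3} with $M=\tilde\Sigma_+$, $\Psi=J_{a,f}$, $k=2$, and this $c_0$. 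The conclusion is that $J_{a,f}$ has at least two critical points in $[J_{a,f}\le c_0]$, which is contained in the open sublevel $[J_{a,f}<I_{a,f}(\ul)+I_{1,0}(w^*)]$, as required.

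The genuine difficulty in this program is not in the final assembly, which is routine Lusternik-Schnirelman theory, but in ensuring that the chosen sublevel set is simultaneously \emph{large enough} (category at least two) and \emph{small enough} (strictly below the compactness threshold $I_{a,f}(\ul)+I_{1,0}(w^*)$). This tension was resolved upstream by the sharp energy estimate of Proposition \ref{main-energy-estimate}, which pushes $I_{a,f}(\ul+tw^*(\cdot-y))$ strictly below the threshold for all $t>0$ and $|y|\ge R_0$, combined with the homotopy between $G\circ F_R$ and the identity on $S^{N-1}$ given by Proposition \ref{l:15-8-3}. Once these two ingredients are in place, the proof of Theorem \ref{t:2nd-3rd sol} reduces to the direct application sketched above.
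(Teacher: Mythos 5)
Your proposal is correct and follows exactly the paper's own argument: the paper's proof of Theorem \ref{t:2nd-3rd sol} is precisely the assembly of Corollary \ref{corr-ps} (Palais--Smale below the level $I_{a,f}(\ul)+I_{1,0}(w^*)$), Proposition \ref{p:15-8-1} (category at least two of the sublevel set), and Proposition \ref{p:15-8-3} (the abstract Lusternik--Schnirelman principle). Your version merely makes explicit the choice of $d_5$ and of the level $c_0=I_{a,f}(\ul)+I_{1,0}(w^*)-\varepsilon_0(R)$, which the paper leaves implicit.
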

\begin{proof}
From Corollary \ref{corr-ps}, we know $(PS)_c$ is satisfied for $J_{a,f}$ when
$c\in (-\infty, I_{a,f}(\ul)+I_{1,0}(w^*))$. Hence the theorem follows from Proposition \ref{p:15-8-1} and Proposition \ref{p:15-8-3}.
\end{proof}

\medskip

{\bf Proof of Theorem \ref{MT} concluded}: 
\begin{proof}
We set the first positive solution as $u_1:=u_{locmin}(a,f,x)$ which was found in Proposition \ref{ATl1.4}.  Further, \eqref{14-8-1} implies
  $$I_{a,f}(\ul)< 0.$$
By Theorem \ref{t:2nd-3rd sol}, $J_{a,f}$ has at least two critical points $v_2$, $v_3$ in
	$$\[J_{a,f}<I_{a,f}(\ul)+I_{1,0}(w^*)\].$$
Using Proposition \ref{ATP1.7}(iii), $u_2:=t_{a,f}(v_2)v_2$ and $u_3:=t_{a,f}(v_3)v_3$ are the 2nd and 3rd positive solutions of \eqref{MAT1}. Further, by Lemma \ref{ATl1.2}(iii),  $0<J_{a,f}(v_i)=I_{a,f}(u_i)$, $i=1,2$. Hence
$$0<I_{a,f}(u_i)<I_{a,f}(u_1)+I_{1,0}(w^*), \quad i=1,2.$$
Hence $u_1, u_2, u_3$ are distinct and \eqref{MAT1} has at least $3$ distinct solutions. 
\end{proof}

\section{Existence Result when $f\equiv 0$}

In this section we aim to prove Theorem \ref{th:1} in the spirit of \cite{DN}. For this using Mountain pass theorem, we first attempt to solve the following problem in the bounded domain with Dirichlet boundary condition:

\begin{equation}\label{P_k}
  \tag{$\mathcal P_k$}
\left\{\begin{aligned}
		(-\Delta)^s u + u &= a(x) |u|^{p-1}u \;\;\text{in}\;B_k,\\
		u &>0 \quad\text{in}\quad B_k,\\
		u &=0 \quad\text{in}\quad \Rn\setminus B_k,
		 \end{aligned}
  \right.
\end{equation}
where $B_k$ denotes the ball of radius $k$, centered at origin, $ 0<a\in L^\infty(\Rn)$ satisfies 
\be\lab{9-10-4} \lim_{|x|\to\infty} a(x) = a_0=\inf_{x \in \Rn} a(x).\ee 

\begin{remark}\lab{r:9-10-1}Without loss of generality, we can assume $a\not\equiv a_0$, since if $a\equiv a_0$ then $u=a_0^{-\frac{1}{p-1}}w^*$ is a solution of \eqref{MAT1} \textup{(with $f\equiv 0$)}, where $w^*$ is the unique ground state solution of \eqref{AT0.8}. In this case Theorem \ref{th:1} follows immediately.
\end{remark}

 We fix some notations first. Denote,
$$E_k:=\{u\in \Hs:  u=0 \quad\text{in}\quad \Rn\setminus B_k\}$$
 i.e., $E_k$ is the closure of $C_0^\infty(B_k)$ w.r.t the norm in $\Hs$. Therefore,
 $$E_1\subseteq E_2\subseteq\cdots\subseteq H^s(\Rn)$$ and
 $\cup_{k\geq 1}E_k$ is dense in $\Hs$.
We define $I_{a,0}$ as in \eqref{EF-1} (taking $f=0$ there) and let $I_{a,0}^k$ denote the restriction of $I_{a,0}$ to the subspace $E_k$. Like before using Remark \ref{r:30-7-3}, we can conclude that any nontrivial critical point of $I_{a,0}^k$ is necessarily a positive solution of \eqref{P_k}.

\begin{lemma}\lab{l:9-8-1}
For each $k\geq 1$, Dirichlet problem \eqref{P_k} admits a solution $u_k$. Moreover, $\{u_k\}$ is uniformly bounded in $\Hs$ and so it contains a subsequence that converges weakly to $\bar u\geq 0$ in $\Hs$.
\end{lemma}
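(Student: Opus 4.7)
\begin{pf}[Proof proposal]
The plan is to apply the Mountain Pass Theorem to $I_{a,0}^k$ on the Hilbert space $E_k$ for each fixed $k\geq 1$, and then to extract uniform estimates from the mountain pass characterization.

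First I would verify the mountain pass geometry for $I_{a,0}^k$. We have $I_{a,0}^k(0)=0$, and since $1<p<2_s^*-1$ and $a\in L^\infty(\Rn)$, the Sobolev embedding $\Hs\hookrightarrow L^{p+1}(\Rn)$ yields
\[
I_{a,0}^k(u)\geq \tfrac12\|u\|_{H^s(\Rn)}^2-\tfrac{\|a\|_{L^\infty(\Rn)}}{p+1}S_1^{-(p+1)/2}\|u\|_{H^s(\Rn)}^{p+1},
\]
so there exist $\rho,\alpha>0$ (independent of $k$) with $I_{a,0}^k(u)\geq\alpha$ when $\|u\|_{H^s(\Rn)}=\rho$. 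For the second geometric condition, I would fix any nontrivial nonnegative $\phi\in C_0^\infty(B_1)\subset E_1\subset E_k$ and note that $I_{a,0}^k(t\phi)\to-\infty$ as $t\to\infty$, so there is $e\in E_1$ with $\|e\|_{H^s(\Rn)}>\rho$ and $I_{a,0}^k(e)<0$ independent of $k$. Next I would check the Palais--Smale condition for $I_{a,0}^k$ on $E_k$: since $B_k$ is bounded, the embedding $E_k\hookrightarrow L^{p+1}(B_k)$ is compact, and standard arguments (boundedness of (PS) sequences from the $I_{a,0}^k(u_n)-\tfrac{1}{p+1}(I_{a,0}^k)'(u_n)u_n$ identity, weak convergence, strong convergence of the nonlinear term) yield strong convergence. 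Thus the Mountain Pass Theorem furnishes a critical point $u_k\in E_k$ at level
\[
c_k := \inf_{\gamma\in\Gamma_k}\max_{t\in[0,1]}I_{a,0}^k(\gamma(t)),\qquad \Gamma_k=\{\gamma\in C([0,1],E_k):\gamma(0)=0,\gamma(1)=e\},
\]
satisfying $I_{a,0}^k(u_k)=c_k\geq\alpha>0$ and $(I_{a,0}^k)'(u_k)=0$. Since $u_k\not\equiv 0$, Remark~\ref{r:30-7-3} (applied in $E_k$, noting that the same test-function argument with $(u_k)_-$ goes through here) shows $u_k>0$ in $B_k$, so $u_k$ solves $(\mathcal{P}_k)$.

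For the uniform bound, the key point is that $\Gamma_1\subset\Gamma_k$ for every $k\geq 1$, because the path $\gamma$ between $0$ and $e$ can be chosen inside $E_1\subset E_k$. Hence $c_k\leq c_1$ for all $k\geq 1$. Combining the identity
\[
c_k=I_{a,0}^k(u_k)-\tfrac{1}{p+1}(I_{a,0}^k)'(u_k)u_k=\Bigl(\tfrac12-\tfrac{1}{p+1}\Bigr)\|u_k\|_{H^s(\Rn)}^2
\]
with $c_k\leq c_1$ yields
\[
\|u_k\|_{H^s(\Rn)}^2\leq \tfrac{2(p+1)}{p-1}\,c_1,
\]
which is the desired uniform bound. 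Since $\Hs$ is a reflexive Hilbert space, a subsequence (still denoted $\{u_k\}$) converges weakly to some $\bar u\in\Hs$. Because $u_k\geq 0$ pointwise, a further subsequence converges a.e.\ to $\bar u$ (via the compactness of $\Hs\hookrightarrow L^2_{\mathrm{loc}}(\Rn)$), so $\bar u\geq 0$.

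The main obstacle in this plan is a clean verification of the (PS) condition on $E_k$ at level $c_k$; one must carefully use the compactness of the embedding $E_k\hookrightarrow L^{p+1}(B_k)$ to pass to the limit in the nonlinear term and then use the Riesz representation of $(-\Delta)^s+I$ on $E_k$ to upgrade weak to strong convergence. The subsequent claim that $\bar u$ is nontrivial is \emph{not} part of this lemma and is deferred to later results in the section (as highlighted in the introduction, this is the delicate issue).
\end{pf}
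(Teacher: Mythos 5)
Your proposal is correct and follows essentially the same route as the paper: mountain pass geometry uniform in $k$, an endpoint $e\in E_1$ common to all $E_k$, the monotonicity $\Gamma_1\subseteq\Gamma_k$ giving $c_k\leq c_1$, the identity $c_k=\bigl(\tfrac12-\tfrac1{p+1}\bigr)\|u_k\|_{H^s(\Rn)}^2$ for the uniform bound, and the $(u_k)_-$ test-function argument plus the maximum principle for positivity. The only difference is that you spell out the verification of the Palais--Smale condition on $E_k$ via the compact embedding $E_k\hookrightarrow L^{p+1}(B_k)$, which the paper leaves implicit when invoking the mountain pass lemma.
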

\begin{proof}
For $u\in E_k$,
\Bea
I_{a,0}^k(u)=I_{a,0}(u)&=&\frac{1}{2}\|u\|_{\Hs}^2-\frac{1}{p+1}\int_{\Rn}a(x)u_+^{p+1}\rm{dx}\\
&\geq& \frac{1}{2}\|u\|_{\Hs}^2-\frac{1}{p+1}\|a\|_{L^\infty(\Rn)}\|u\|_{L^{p+1}(\Rn)}^{p+1}\\
&\geq& \frac{1}{2}\|u\|_{\Hs}^2-\frac{1}{p+1}\|a\|_{L^\infty(\Rn)}S_1^{-\frac{p+1}{2}}\|u\|_{\Hs}^{p+1},
\Eea
where $S_1$ is as defined in \eqref{17-7-5}. As $\cup_{k\geq 1} E_k$ is dense in $\Hs$, 
$$I_{a,0}(u)\geq \frac{1}{2}\|u\|_{\Hs}^2-\frac{1}{p+1}\|a\|_{L^\infty(\Rn)}S_1^{-\frac{p+1}{2}}\|u\|_{\Hs}^{p+1} \quad\forall\, u\in\Hs.$$
Therefore, there exist $\de, \, \bar\al>0$ such that
$$I_{a,0}(u)\geq\bar \al>0 \quad\text{on}\quad \|u\|_{\Hs}=\de,  \quad u \in\Hs. $$
Now, choose $u_0\in E_1$ with $\|u_0\|_{\Hs}>\de$ and $u_0\geq 0$. As $p>1$, there exists $t_0>0$ such that
$$I_{a,0}(tu_0)\leq \frac{t^2}{2}\|u\|_{\Hs}^2-\frac{a_0t^{p+1}}{p+1}\int_{B_1}u_0^{p+1} {\rm d}x < 0 \quad\forall\, t>t_0.$$
If $t_0\leq 1$, then we define $e:=u_0$ otherwise, we define $e:=t_0u_0$. Therefore, $0\leq e\in E_1$, $\|e\|\geq \de$ and $I_{a,0}(te)<0$ for all $t>1$.
We define $\Ga_k$ to be the set of all continuous paths in $E_k$ connecting $0$ and $e$ and $\Ga$ be the set of all continuous paths in $\Hs$ connecting $0$ and $e$. Set,
\be\lab{9-8-1}
\al:=\inf_{\ga\in\Ga}\max_{u\in\ga}I_{a,0}(u)
\ee 
and \be\lab{9-8-2}
\al_k:=\inf_{\ga\in\Ga_k}\max_{u\in\ga}I_{a,0}(u).
\ee 
Observe that, $\Ga_1\subseteq\Ga_2\subseteq\cdots\subseteq\Ga$ and this in turn implies
\be\lab{9-8-3}
\al_1\geq\al_2\geq\cdots\geq \al\geq\bar\al>0.
\ee
Moreover, since $\cup_{k\geq 1}E_k$ is dense in $\Hs$, it is easy to check that $\al_k\to\al$ as $k\to\infty$. Applying Mountain pass lemma, we obtain $\al_k$ is a critical point of $I_{a,0}^k$. Let $u_k\in E_k$ be the critical point of $I_{a,0}^k$ corresponding to $\al_k$. Therefore, $I_{a,0}(u_k)=I_{a,0}^k(u_k)=\al_k$ and 
$(I_{a,0}^k)'(u_k)=0$. In particular,
$$\al_k=I_{a,0}^k(u_k)-\frac{1}{p+1}(I_{a,0}^k)'(u_k)u_k=\bigg(\frac{1}{2}-\frac{1}{p+1}\bigg)\|u_k\|^2_{\Hs}.$$
Since $\al_k\leq \al_1$, for all $k\geq 1$, from the above expression we obtain 
$\{u_k\}$ is uniformly bounded in $\Hs.$
Hence there exists $\bar u$ in $\Hs$ such that, up to a subsequence, $u_k\rightharpoonup \bar u$ in  $\Hs$. Moreover, $I_{a,0}(u_k)=\al_k\geq \bar \al>0$ implies $u_k$ is nontrivial. Therefore, taking $(u_k)_-$ as the test function in 
\begin{equation}\lab{9-12-1}
\left\{\begin{aligned}
		(-\Delta)^s u + u &= a(x) u^{p}_+ \;\;\text{in}\;B_k,\no\\
		u &=0 \quad\text{in}\quad \Rn\setminus B_k,
		 \end{aligned}
  \right.
\end{equation}
we obtain $u_k\geq 0$. Therefore, using maximum principle we  have $u_k>0$ in $B_k$. Hence $\bar u\geq 0$.


\end{proof}
\begin{lemma}\lab{l:9-8-2}
Let $u_k$ be a critical point of $I_{a,0}^k$ and $u_k\rightharpoonup \bar u$ in $\Hs$. Then $\bar u$ is a critical point of $I_{a,0}$.
\end{lemma}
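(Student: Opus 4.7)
The plan is to test the equation satisfied by $u_k$ against compactly supported smooth functions, pass to the weak limit exploiting local compactness of the Sobolev embedding on the support, and then extend by density to all of $H^s(\mathbb{R}^N)$.

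\textbf{Step 1 (Euler--Lagrange for $u_k$ with compactly supported tests).}
Fix an arbitrary $\phi \in C^\infty_c(\mathbb{R}^N)$. Since $\mathrm{supp}\,\phi$ is bounded, there exists $k_0 \geq 1$ such that $\mathrm{supp}\,\phi \subset B_k$ for every $k \geq k_0$; in particular $\phi \in E_k$ for such $k$. Because $u_k$ is a critical point of $I^k_{a,0}$ and $u_k \geq 0$ (shown in the proof of Lemma~\ref{l:9-8-1}), we have
\begin{equation*}
\iint_{\mathbb{R}^{2N}} \frac{(u_k(x)-u_k(y))(\phi(x)-\phi(y))}{|x-y|^{N+2s}}\,{\rm d}x\,{\rm d}y + \int_{\mathbb{R}^N} u_k\phi\,{\rm d}x - \int_{\mathbb{R}^N} a(x) u_k^p \phi\,{\rm d}x = 0
\end{equation*}
for all $k \geq k_0$.

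\textbf{Step 2 (Passage to the limit).}
The weak convergence $u_k \rightharpoonup \bar u$ in $H^s(\mathbb{R}^N)$ implies at once
\begin{equation*}
\iint_{\mathbb{R}^{2N}}\!\! \frac{(u_k(x)-u_k(y))(\phi(x)-\phi(y))}{|x-y|^{N+2s}}\,{\rm d}x\,{\rm d}y + \!\int_{\mathbb{R}^N}\! u_k\phi\,{\rm d}x \longrightarrow \iint_{\mathbb{R}^{2N}}\!\! \frac{(\bar u(x)-\bar u(y))(\phi(x)-\phi(y))}{|x-y|^{N+2s}}\,{\rm d}x\,{\rm d}y + \!\int_{\mathbb{R}^N}\! \bar u\phi\,{\rm d}x.
\end{equation*}
For the nonlinear term, the compact embedding $H^s(\mathbb{R}^N) \hookrightarrow L^{p+1}_{loc}(\mathbb{R}^N)$ gives $u_k \to \bar u$ in $L^{p+1}(\mathrm{supp}\,\phi)$, hence $u_k^p \to \bar u^p$ in $L^{(p+1)/p}(\mathrm{supp}\,\phi)$. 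Combined with $a \in L^\infty(\mathbb{R}^N)$ and $\phi \in L^\infty$ with compact support, this yields
\begin{equation*}
\int_{\mathbb{R}^N} a(x) u_k^p \phi \,{\rm d}x \longrightarrow \int_{\mathbb{R}^N} a(x) \bar u^p \phi\,{\rm d}x.
\end{equation*}
Therefore $\bar u$ satisfies the weak formulation of \eqref{MAT1} (with $f\equiv 0$) against every $\phi \in C^\infty_c(\mathbb{R}^N)$.

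\textbf{Step 3 (Extension to $H^s$ by density).}
Since $\{u_k\}$ is bounded in $H^s(\mathbb{R}^N)$ by Lemma~\ref{l:9-8-1}, so is $\bar u$. Consequently, by Sobolev embedding, $a\bar u^p \in L^{(p+1)/p}(\mathbb{R}^N)$, so the map $\phi \mapsto \int a \bar u^p \phi\,{\rm d}x$ is continuous on $H^s(\mathbb{R}^N)$; the bilinear form in the fractional seminorm plus $L^2$ inner product is likewise continuous on $H^s(\mathbb{R}^N)$. Since $C^\infty_c(\mathbb{R}^N)$ is dense in $H^s(\mathbb{R}^N)$, the identity established in Step~2 extends to every $\phi \in H^s(\mathbb{R}^N)$, proving $(I_{a,0})'(\bar u) = 0$.

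No step presents a genuine obstacle: the only thing worth being careful about is that the test function $\phi$ must be \emph{compactly supported} to guarantee that $\phi \in E_k$ for all sufficiently large $k$ (so that the Euler--Lagrange identity for $u_k$ applies) and so that $L^{p+1}_{loc}$-compactness is enough to handle the nonlinear term. The density argument then costlessly upgrades the conclusion to all of $H^s(\mathbb{R}^N)$.
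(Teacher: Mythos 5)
Your proof is correct and follows essentially the same route as the paper: test the Euler--Lagrange identity for $u_k$ against a fixed $\phi\in C^\infty_c(\Rn)$ (which lies in $E_k$ for large $k$), pass to the limit using weak convergence for the linear part and local compactness for the nonlinear term (the paper invokes its Lemma \ref{L2} here, which is the same mechanism), and conclude by density. The only cosmetic difference is that you spell out the density step that the paper leaves implicit.
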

\begin{proof}
$u_k$ be a critical point of $I_{a,0}^k$ implies 
$$
\langle u_k, \psi\rangle_{\Hs}-\int_{B_k}a(x)(u_k)_+^{p} \, \psi\, {\rm d}x = 0 \quad\forall\, \psi\in E_k.
$$
Let $\phi\in C_0^\infty(\Rn)$. Then $\phi\in E_k$ for large $k$. Therefore, for large $k$,
\be\lab{9-10-2}\langle u_k, \phi\rangle_{\Hs}-\int_{\Rn}a(x)(u_k)_+^{p}\phi\, {\rm d}x = 0.\ee
Since $u_k\rightharpoonup \bar u$ in $\Hs$ implies $\langle u_k, \phi\rangle_{\Hs}\to \langle \bar u, \phi\rangle_{\Hs}$ and by lemma \ref{L2},

 $\displaystyle\int_{\Rn}a(x)(u_k)_+^{p} \, \phi\,{\rm d}x \to \int_{\Rn}a(x)\bar u_+^{p} \, \phi\, {\rm d}x.$ Therefore letting $k \rightarrow \infty$ in \eqref{9-10-2}, yields $I'_{a,0}(\bar u)(\phi)=0$. Since $\phi\in C^\infty_0(\Rn)$ is arbitrary, the lemma follows.
\end{proof}

Thanks to Lemma \ref{l:9-8-1} and Lemma \ref{l:9-8-2}, we are just left to show that $\bar u\not\equiv 0$, in order to complete the proof of Theorem \ref{th:1}.

\subsection{Comparison argument}
For any arbitrarily fixed $R>0$, define,
\begin{equation}
\label{eq:h}
h_R(x)=\left\{\begin{aligned}
		a(x)\;\;\text{if}\quad |x|>R,\\
		0 \;\;\text{if}\quad |x|\leq R.
				 \end{aligned}
  \right.
\end{equation}
We define the following Nehari manifolds:
$$\mathcal{N}:= \{u\in\Hs\setminus\{0\}: \|u\|_{\Hs}^2=\int_{\Rn}a(x)u_+^{p+1} \, {\rm d}x\}$$ and
$$\mathcal{N}_{R}:= \{u\in\Hs\setminus\{0\}: \|u\|_{\Hs}^2=\int_{\Rn}h_R(x)u_+^{p+1} \, {\rm d}x\}$$
Set, \be\lab{9-9-1}
\al^*:=\inf_{u\in\mathcal{N}}I_{a,0}(u) \quad\text{and}\quad \ba^*_R:=\inf_{u\in\mathcal{N}_{R}}I_{h_R,0}(u).
\ee

\begin{lemma}\lab{l:9-10-1}
$\al^*<\lim_{R\to\infty}\ba^*_R$.
\end{lemma}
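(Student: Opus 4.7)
The plan is to identify
$$L := a_0^{-\tfrac{2}{p-1}} I_{1,0}(w^*)$$
as the limiting value of $\ba^*_R$ (it is the ground state energy of the ``problem at infinity'' $(-\De)^s u + u = a_0 u^p$, whose ground state is $u_0 := a_0^{-1/(p-1)} w^*$) and then to prove $\al^* < L$ separately. Throughout I use the standard Nehari identity: on $\mathcal{N}$ we have $I_{a,0}(u) = \frac{p-1}{2(p+1)}\|u\|_{\Hs}^2$, and analogously $I_{h_R,0}(u)=\frac{p-1}{2(p+1)}\|u\|_{\Hs}^2$ on $\mathcal{N}_R$.

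\emph{Computing $\lim_{R\to\infty}\ba^*_R=L$.} First I would observe that $R\mapsto\ba^*_R$ is nondecreasing: for $R<R'$ and $v\in\mathcal{N}_{R'}$, since $h_{R'}\leq h_R$ pointwise, the unique $s=s(v)>0$ with $sv\in\mathcal{N}_R$, solving $s^{p-1}=\|v\|_{\Hs}^2/\int h_R v_+^{p+1}\,{\rm d}x$, satisfies $s\leq 1$, so $I_{h_R,0}(sv)=s^2 I_{h_{R'},0}(v)\leq I_{h_{R'},0}(v)$ yields $\ba^*_R\leq \ba^*_{R'}$. For the lower bound I use $\|u\|_{\Hs}^2=\int_{|x|>R}a(x)u_+^{p+1}\,{\rm d}x$ on $\mathcal{N}_R$ together with $a(x)\leq a_0+\eps_R$ for $|x|>R$ (where $\eps_R\downarrow 0$ by \eqref{9-10-4}) and the Sobolev inequality $\|u\|_{L^{p+1}(\Rn)}^{p+1}\leq S_1^{-(p+1)/2}\|u\|_{\Hs}^{p+1}$; these give $\|u\|_{\Hs}^{p-1}\geq (a_0+\eps_R)^{-1}S_1^{(p+1)/2}$, whence
$$\ba^*_R \geq \tfrac{p-1}{2(p+1)}(a_0+\eps_R)^{-\tfrac{2}{p-1}} S_1^{\tfrac{p+1}{p-1}} = (a_0+\eps_R)^{-\tfrac{2}{p-1}} I_{1,0}(w^*) \To L.$$
For the matching upper bound I take the translated test function $w_y(x):=a_0^{-1/(p-1)}w^*(x-y)$ and pick $t_R(y)>0$ with $t_R(y)w_y\in\mathcal{N}_R$. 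After the change of variables $z=x-y$, dominated convergence (using the decay $w^*(z)\lesssim (1+|z|^{N+2s})^{-1}$ and $a(z+y)\to a_0$ as $|y|\to\infty$) gives $\int h_R w_y^{p+1}\,{\rm d}x \to a_0^{-2/(p-1)}\int (w^*)^{p+1}\,{\rm d}x$, so $t_R(y)\to 1$; hence $\ba^*_R \leq I_{h_R,0}(t_R(y)w_y)\to L$.

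\emph{Strict inequality $\al^*<L$.} I would test $I_{a,0}$ against $u_0=a_0^{-1/(p-1)}w^*>0$, which solves $(-\De)^s u_0+u_0=a_0 u_0^p$ and hence $\|u_0\|_{\Hs}^2=a_0\int u_0^{p+1}\,{\rm d}x$. By Remark~\ref{r:9-10-1} I may assume $a\not\equiv a_0$, and since $a(x)\geq a_0$ and $u_0>0$ on $\Rn$,
$$\int_{\Rn} a(x)u_0^{p+1}\,{\rm d}x > a_0\int_{\Rn}u_0^{p+1}\,{\rm d}x = \|u_0\|_{\Hs}^2.$$
Therefore the unique $t>0$ with $tu_0\in\mathcal{N}$, given by $t^{p-1}=\|u_0\|_{\Hs}^2/\int a u_0^{p+1}\,{\rm d}x$, satisfies $t<1$ strictly, so
$$\al^* \leq I_{a,0}(tu_0) = \tfrac{p-1}{2(p+1)}t^2\|u_0\|_{\Hs}^2 < \tfrac{p-1}{2(p+1)}\|u_0\|_{\Hs}^2 = I_{a_0,0}(u_0) = L.$$

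The main technical point will be controlling the convergence $t_R(y)\to 1$ via dominated convergence to obtain the sharp upper bound for $\ba^*_R$; this is where the polynomial decay of $w^*$ and the hypothesis $a(x)\to a_0$ at infinity are genuinely used, everything else being a direct Nehari/rescaling computation.
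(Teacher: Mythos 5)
Your argument is correct, and it takes a genuinely different route from the paper's. The paper never identifies the value of $\lim_{R\to\infty}\ba^*_R$: it rewrites both $\al^*$ and $\ba^*_R$ as (negative powers of) suprema of $u\mapsto\int a\,u_+^{p+1}$ over the unit sphere of $\Hs$, and obtains strictness by proving that the constant--coefficient supremum $M=\sup_{\|u\|_{\Hs}=1}\int_{\Rn}a_0u_+^{p+1}\,{\rm d}x$ is \emph{attained} -- via symmetric decreasing rearrangement and the compact embedding of radial decreasing functions into $L^{p+1}(\Rn)$ -- after which $a\geq a_0$, $a\not\equiv a_0$ and the positivity of the maximizer give $\sup\int a\,u_+^{p+1}>M\geq\lim_R\sup\int_{|x|>R}a\,u_+^{p+1}$. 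You instead pin down the limit explicitly, $\lim_R\ba^*_R=a_0^{-2/(p-1)}I_{1,0}(w^*)$, by combining monotonicity in $R$, the Sobolev lower bound through $S_1$ (together with the identity $I_{1,0}(w^*)=\tfrac{p-1}{2(p+1)}S_1^{(p+1)/(p-1)}$ of Remark \ref{r:31-7-1}), and translated copies of the known ground state $w^*$ as test functions; strictness then comes from projecting the explicit positive function $a_0^{-1/(p-1)}w^*$ onto $\mathcal{N}$. The underlying mechanism for the strict gap is the same in both proofs ($a\geq a_0$, $a\not\equiv a_0$, tested against a positive extremal), but you import attainment from the known existence of $w^*$ rather than re-deriving it by rearrangement, and your computation yields the additional quantitative information that the limit equals the ground-state energy of the problem at infinity $(-\De)^su+u=a_0u^p$. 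One cosmetic point: as in the paper's own proof, the passage from ``$a\not\equiv a_0$'' (Remark \ref{r:9-10-1}) to $\int_{\Rn}a\,u_0^{p+1}\,{\rm d}x>a_0\int_{\Rn}u_0^{p+1}\,{\rm d}x$ should be phrased as $\mu(\{x:a(x)\neq a_0\})>0$, since equality a.e.\ reduces to the case already settled in that remark; this is exactly how the paper words it and is not a gap.
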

\begin{proof}
From the definition of $I_{a,0}$, we have
$$\al^*= \bigg(\frac{1}{2}-\frac{1}{p+1}\bigg)\inf_{\mathcal{N}}\int_{\Rn}a(x)u_+^{p+1}{\rm d}x= \bigg(\frac{1}{2}-\frac{1}{p+1}\bigg)\inf_{\Hs\setminus\{0\}}\bigg[\frac{\|u\|_{\Hs}}{\big(\int_{\Rn} a(x)u_+^{p+1}{\rm d}x \big)^\frac{1}{p+1}}\bigg]^\frac{2(p+1)}{p-1}.$$
Similarly, 
$$\ba^*_R=\bigg(\frac{1}{2}-\frac{1}{p+1}\bigg)\inf_{\Hs\setminus\{0\}}\bigg[\frac{\|u\|_{\Hs}}{\big(\int_{\Rn}h_R(x)u_+^{p+1}{\rm d}x\big)^\frac{1}{p+1}}\bigg]^\frac{2(p+1)}{p-1}.$$
Therefore, it is enough to prove 
$$\inf_{\Hs\setminus\{0\}}\frac{\|u\|_{\Hs}}{\big(\int_{\Rn}a(x)u_+^{p+1}{\rm d}x\big)^\frac{1}{p+1}}<\lim_{R\to\infty}\bigg(\inf_{\Hs\setminus\{0\}}\frac{\|u\|_{\Hs}}{\big(\int_{\Rn}h_R(x)u_+^{p+1}{\rm d}x\big)^\frac{1}{p+1}}\bigg).$$

Equivalently, it is enough to show
\be\lab{9-10-1}\sup_{\|u\|_{\Hs}=1}\int_{\Rn}a(x)u_+^{p+1} {\rm d}x>\lim_{R\to\infty}\bigg(\sup_{\|u\|_{\Hs}=1}\int_{|x|>R}a(x)u_+^{p+1} {\rm d}x\bigg).\ee
From \eqref{9-10-4} we have $\lim_{|x|\to\infty} a(x) = a_0=\inf_{x \in \Rn} a(x)$. 
 In view of Remark \ref{r:9-10-1}, we first note that, it is enough to consider the 
case when $\mu(\{x\in\Rn: a(x)\neq a_0\})>0$, where $\mu(X)$ denotes the Lebesgue measure of a set $X$. In this case, we 

{\bf Claim:} \be\lab{9-10-5}\sup_{\|u\|_{\Hs}=1}\int_{\Rn}a(x)u_+^{p+1} {\rm d}x>M:=\sup_{\|u\|_{\Hs}=1}\int_{\Rn}a_0u_+^{p+1} {\rm d}x.\ee

To see the claim, first we note that clearly, for each $u\in\Hs$ with $\|u\|_{\Hs}=1$, we have 
$$\int_{\Rn}a(x)u_+^{p+1} {\rm d}x>\int_{\Rn}a_0u_+^{p+1} {\rm d}x.$$
Therefore, the claim will be proved if we show that $M$ is attained. For that, let $v_n$ be a maximizing sequence, i.e.,
$$\|v_n\|_{\Hs}=1, \quad \int_{\Rn}a_0(v_n)_+^{p+1} {\rm d}x\to M.$$
Using symmetric rearrangement technique, without loss of generality, we can
assume that $v_n$ is radially symmetric and symmetric decreasing (see \cite{Frank-2}). We denote by $H^s_{rad, d}(\Rn)$, the set of all radially symmetric and decreasing functions in $\Hs$. Using \cite[Lemma 6.1]{BM-2}, it is easy to see that 
$$H^s_{rad, d}(\Rn)\hookrightarrow L^{p+1}(\Rn)$$ is compact. Hence by standard argument, it follows that $M$ is attained. Therefore the claim follows.  

Thanks to the above above claim, we have
\bea\lab{10-9-1}
\sup_{\|u\|_{\Hs}=1}\int_{\Rn}a(x)u_+^{p+1} {\rm d}x &>&\sup_{\|u\|_{\Hs}=1}\int_{\Rn}a_0u_+^{p+1} {\rm d}x\no\\
&>& \sup_{\|u\|_{\Hs}=1}\int_{|x|>R}a_0u_+^{p+1} {\rm d}x\no\\
&\geq&\lim_{R\to\infty}\bigg( \sup_{\|u\|_{\Hs}=1}\int_{|x|>R}a_0u_+^{p+1} {\rm d}x\bigg).
\eea
Using the fact that $\lim_{|x|\to\infty} a(x)\to a_0$, a straight forward computation yields
$$\lim_{R\to\infty}\bigg( \sup_{\|u\|_{\Hs}=1}\int_{|x|>R}a_0u_+^{p+1} {\rm d}x\bigg)=\lim_{R\to\infty}\bigg( \sup_{\|u\|_{\Hs}=1}\int_{|x|>R}a(x)u_+^{p+1} {\rm d}x\bigg).$$
Substituting the above equality into \eqref{10-9-1}, we obtain \eqref{9-10-1}.


\end{proof}

\begin{lemma}\lab{l:9-11-1}
$\al\leq \al^*$, where $\al$ and $\al^*$ are defined as in \eqref{9-8-1} and \eqref{9-9-1}.
\end{lemma}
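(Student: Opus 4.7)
The plan is to prove $\alpha \leq \alpha^*$ by constructing, for each nonnegative $u \in \mathcal{N}$, an explicit path $\gamma \in \Gamma$ from $0$ to $e$ along which $\max I_{a,0}(\gamma) = I_{a,0}(u)$, and then taking the infimum.

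First I would reduce the infimum defining $\alpha^*$ to nonnegative elements of $\mathcal{N}$. Given any $u \in \mathcal{N}$, the pointwise bound $(u_+(x)-u_+(y))^2 \leq (u(x)-u(y))^2$ gives $u_+ \in \Hs$ with $\|u_+\|_{\Hs} \leq \|u\|_{\Hs}$, and $u \in \mathcal{N}$ forces $u_+ \not\equiv 0$. I pick the unique $t > 0$ with $v := tu_+ \in \mathcal{N}$, namely $t^{p-1} = \|u_+\|_{\Hs}^2 / \int_{\Rn} a(x) u_+^{p+1}\, {\rm d}x = \|u_+\|_{\Hs}^2 / \|u\|_{\Hs}^2 \leq 1$, so $t \leq 1$. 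Using the identity $I_{a,0}(w) = \frac{p-1}{2(p+1)}\|w\|_{\Hs}^2$ on $\mathcal{N}$, I get $I_{a,0}(v) = \frac{p-1}{2(p+1)} t^2 \|u_+\|_{\Hs}^2 \leq \frac{p-1}{2(p+1)}\|u\|_{\Hs}^2 = I_{a,0}(u)$. Hence $\alpha^* = \inf\{I_{a,0}(v) : v \in \mathcal{N},\, v \geq 0\}$.

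Next, fix $u \in \mathcal{N}$ with $u \geq 0$ and recall $e \geq 0$ with $I_{a,0}(e) < 0$. For $T > 1$ to be chosen large, I define $\gamma$ as the concatenation of the three line segments $\gamma_1(s) = sTu$, $\gamma_2(s) = T[(1-s)u + se]$, $\gamma_3(s) = [T(1-s) + s]e$, each for $s \in [0,1]$; the endpoints match so $\gamma \in \Gamma$. Along $\gamma_1$, since $u \in \mathcal{N}$, the function $\tau \mapsto I_{a,0}(\tau u)$ is strictly increasing on $[0,1]$ and strictly decreasing on $[1,\infty)$, so $\max_{\tau \in [0,T]} I_{a,0}(\tau u) = I_{a,0}(u)$. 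Along $\gamma_3$: the hypothesis $I_{a,0}(e) < 0$ forces the Nehari scalar $t(e)$ of $e$ to satisfy $t(e) < 1$ (otherwise $f(\tau) := I_{a,0}(\tau e)$ would satisfy $f(1) > f(0) = 0$, contradicting $f(1) < 0$), so $f$ is strictly decreasing on $[1,\infty)$ and $\max_{\tau \in [1,T]} f(\tau) = f(1) = I_{a,0}(e) < 0 < I_{a,0}(u)$.

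The main obstacle is controlling $I_{a,0}$ along $\gamma_2$. Since $u, e \geq 0$ are nontrivial, $(1-s)u + se \geq 0$ and $\not\equiv 0$ for every $s \in [0,1]$, so $s \mapsto \int_{\Rn} a(x)[(1-s)u+se]^{p+1}\, {\rm d}x$ is continuous and strictly positive on the compact interval $[0,1]$, hence bounded below by some $c > 0$, while $s \mapsto \|(1-s)u + se\|_{\Hs}^2$ is bounded above by some $C$. Using $((1-s)u+se)_+ = (1-s)u+se$ I obtain
$$I_{a,0}(\gamma_2(s)) \leq \frac{T^2}{2} C - \frac{T^{p+1}}{p+1} c$$
uniformly in $s \in [0,1]$. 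Since $p+1 > 2$, the right-hand side tends to $-\infty$ as $T \to \infty$, so for $T$ sufficiently large $I_{a,0}(\gamma_2(s)) < I_{a,0}(u)$ for all $s$. The concatenated path therefore satisfies $\max_s I_{a,0}(\gamma(s)) = I_{a,0}(u)$, giving $\alpha \leq I_{a,0}(u)$. Taking the infimum over nonnegative $u \in \mathcal{N}$ yields $\alpha \leq \alpha^*$.
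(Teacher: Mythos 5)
Your proof is correct and follows essentially the same strategy as the paper's: join $0$ to $e$ by a path passing through the Nehari point, with the remaining portion pushed far enough from the origin that $I_{a,0}\le 0$ there, so the maximum along the path is exactly $I_{a,0}(u)$ and taking the infimum gives $\al\le\al^*$. The only cosmetic differences are that you first replace $u$ by a nonnegative Nehari representative of no larger energy and use a polygonal path with a large dilation parameter $T$, whereas the paper keeps a general $v\in\mathcal{N}$ and routes the connector along a large circular arc in the two-dimensional span of $v$ and $e$; both connectors are controlled by the same compactness-on-the-positive-cone estimate.
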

\begin{proof}
Let $v\in\mathcal{N}$ be arbitrarily chosen and  $V$ denote the $2$-dimensional subspace  spanned by $v$ and $e$, where $e$ is as found in the proof of Lemma \ref{l:9-8-1}. Let $V^+:=\{av+be: a\geq 0, \, b\geq 0\}$. Let $S$ be the circle on $V$ with radius $R$ large enough such that $I_{a,0}\leq 0$ on $S\cap V^+$ (this follows since $p>1$ and standard compactness argument on $V^+$) and $v,\, e$ lie inside $S$. Let $l_{v}:=\{tv: t\geq 0\}$ and $l_{e}:=\{te: t\geq 0\}$ intersect $S$ at $v_1$ and $v_2$ respectively. We define, $\tilde \ga$ be the path that consists of the segment on $l_{v}$ with endpoints $0$ and $v_1$, the arc $S\cap V^+$ (connecting $v_1$ and $v_2$) and the segment on $l_{e}$ with endpoints $v_2$ and $e$. Therefore, clearly 
$\tilde\ga\in\Gamma$ and $v\in\tilde\ga$. 

{\bf Claim:} $\max_{u\in\tilde\ga}I_{a,0}(u)=I_{a,0}(v).$

Indeed, a straight forward computation yields 
$$v\in\mathcal{N}\quad\text{implies}\quad \max_{t\geq 0}I_{a,0}(tv)=I_{a,0}(v).$$ Further, from the construction of $\tilde\ga$ it follows $I_{a,0}\leq 0$ on the rest part of $\tilde \ga$ (since $I_{a,0}\leq 0$ on $S\cap V^+$ and $I_{a,0}(te)<0$ for $t>1$). Hence the claim follows. 

The above claim immediately yields  $$\al\leq \max_{u\in\tilde\ga}I_{a,0}(u)=I_{a,0}(v).$$   
On the other hand, as $v\in\mathcal{N}$ was arbitrarily chosen, we obtain
$$\al\leq \inf_{v\in\mathcal{N}}I_{a,0}(v)=\al^*.$$
\end{proof}

\vspace{2mm}

{\bf Proof of Theorem \ref{th:1}}
\begin{proof}
By Lemma  \ref{l:9-8-2} and Lemma \ref{l:9-8-1}, we know that $\bar u$ is a  nonnegative critical point of $I_{a,0}$. Therefore, it's enough to show that $\bar u\nequiv 0$ in $\Hs$. We prove this by method of contradiction. 
Suppose  $\bar u \equiv 0$ in $\Hs$.
Therefore, using Rellich compactness theorem, $u_k\to 0$ in $L^{p+1}_{loc}(\Rn)$. Hence,
$$0\leq \eps_k:=\int_{B_R}a(x)(u_k)_+^{p+1} {\rm d}x\to 0 \quad\text{as}\quad k\to\infty.$$
It is easy to see that for each $k$, there exists unique $t_{k,R}>0$ such that $t_{k,R}u_k\in \mathcal{N}_{R}$, i.e., 
$$t_{k,R}^2\|u_k\|^2_{\Hs}=t_{k,R}^{p+1}\int_{\Rn}h_R(x)(u_k)_+^{p+1} {\rm d}x.$$ 

{\bf Claim:} $\{t_{k,R}\}_{k=1}^{\infty}$ is a bounded sequence.

To prove the claim, first we note that since $u_k$ is critical point of $I_{a,0}^k$, we have
\Bea
\|u_k\|^2_{\Hs}=\int_{\Rn}a(x)(u_k)_+^{p+1} {\rm d}x=\eps_k+\int_{|x|>R}a(x)(u_k)_+^{p+1} {\rm d}x= \eps_k+\int_{|x|>R}h_R(x)(u_k)_+^{p+1} {\rm d}x.
\Eea
Further, $p>1$ implies there exists $\de>0$ such that $p+1>2+\de$. Therefore, if $t_{k,R}\geq 1$ then combining the above two expressions, we obtain 
$$\eps_kt_{k,R}^2+t_{k,R}^2\int_{|x|>R}h_R(x)(u_k)_+^{p+1} {\rm d}x\geq t_{k,R}^{p+1}\int_{|x|>R}h_R(x)(u_k)_+^{p+1} {\rm d}x\geq t_{k,R}^{2+\de}\int_{|x|>R}h_R(x)(u_k)_+^{p+1} {\rm d}x.$$
Consequently,
\bea\lab{9-10-6} t_{k,R}^2\eps_k\geq (t_{k,R}^{2+\de}-t_{k,R}^2)\int_{|x|>R}h_R(x)(u_k)_+^{p+1} {\rm d}x&=&(t_{k,R}^{2+\de}-t_{k,R}^2)\int_{|x|>R}a(x)(u_k)_+^{p+1} {\rm d}x\no\\
&=&(t_{k,R}^{2+\de}-t_{k,R}^2)(\|u_k\|_{\Hs}^2-\eps_k).
\eea
Further, note that $\eps_k\to 0$ and $I_{a,0}^k(u_k)=\al_k$ implies $$\|u_k\|_{\Hs}^2=2\int_{\Rn}a(x)(u_k)_+^{p+1}{\rm d}x+2\al_k\geq 2\al_k\geq 2\bar\al$$
(the last inequality follows from \eqref{9-8-3}). Therefore from \eqref{9-10-6}, we have $$t_{k,R}^2\eps_k\geq\bar\al(t_{k,R}^{2+\de}-t_{k,R}^2) \quad\text{for large} \ k.$$
As a consequence, $t_{k,R}\to 1$ as $k\to\infty$ (for fixed $R>0$).  Hence the claim holds.

\vspace{2mm}

Using the above claim, we have $t_{k,R}u_k\rightharpoonup 0$ in $\Hs$. Further, as $u_k$ is critical point of $I_{a,0}^k$ implies 
$\|u_k\|^2_{\Hs}=\int_{\Rn}a(x)(u_k)_+^{p+1} {\rm d}x$, a straight forward computation yields that 
$\max_{t\geq 0} I_{a,0}(tu_k)=I_{a,0}(u_k)$. Therefore,
\Bea
\al_k=I_{a,0}(u_k)&\geq& I_{a,0}(t_{k,R}u_k)\\
&=&\frac{t_{k,R}^2}{2}\|u_k\|^2_{\Hs}-\frac{t_{k,R}^{p+1}}{p+1}\int_{|x|>R}a(x)(u_k)_+^{p+1}{\rm d}x-\frac{t_{k,R}^{p+1}}{p+1}\int_{|x|<R}a(x)(u_k)_+^{p+1}{\rm d}x\\
&=&I_{h_R,0}(t_{k,R}u_k)-\frac{t_{k,R}^{p+1}}{p+1}\int_{|x|<R}a(x)(u_k)_+^{p+1}{\rm d}x\\
&\geq&\ba_R^*-\frac{t_{k,R}^{p+1}}{p+1}\int_{|x|<R}a(x)(u_k)_+^{p+1}{\rm d}x,
\Eea
where in the last inequality we have used the fact that $t_{k,R}u_k\in\mathcal{N}_R$. As before, 

$\frac{t_{k,R}^{p+1}}{p+1}\displaystyle\int_{|x|<R}a(x)(u_k)_+^{p+1}{\rm d}x\to 0$ as $k\to\infty$ (keeping $R>0$ fixed) . Thus,  taking the limit $k\to\infty$ yields $\al\geq \ba_R^*$, where $\al$ is as defined in \eqref{9-8-1}. Consequently, $\al\geq \lim_{R\to\infty}\ba_R^*.$
Combining  this with  Lemma \ref{l:9-11-1}, we obtain $\lim_{R\to\infty}\ba_R^*\leq \al^*$. This contradicts Lemma \ref{l:9-10-1}. Hence $\bar u\neq 0$. Therefore,  $\bar u$ is a nontrivial nonnegative critical point of $I_{a,0}$. Finally, thanks to maximum principle \cite[Theorem 1.2]{DPQ}, we get $\bar u$ is a positive solution to \eqref{MAT2} (with $f\equiv 0$). Hence, $\bar u$ is a positive solution to \eqref{MAT1}(with $f\equiv 0$). This completes the proof.  

\end{proof}

\begin{remark}
It is easy to note that if $a(x)\to 0$ as $|x|\to\infty$
at infinity, once again some ``compactness'' exists and standard variational arguments leads to the existence of positive solutions in this case. 
\end{remark}

\begin{remark}
If  $s>\frac{1}{2}$ and $a:\Rn\to [0,\infty)$ is radial function satisfying the growth condition
$$a(r)\leq C(1+r^l) \quad r\geq 0,$$
C>0 being a constant and $l<(N-1)(p-1)/2$, then  proceeding in the spirit of \cite[Lemma 4.8]{DN}, it follows that \eqref{9-12-1} admits a positive radial solution $w_k$ and $\|w_k\|_{\Hs}$ is uniformly bounded above. Therefore, up to a subsequence $w_k\rightharpoonup w$ in $\Hs$ and $w\geq 0$. Next, using the 
radial lemma \cite[Theorem 7.4(i)]{Se}, it can be shown in the similar way as in \cite[Corollary~4.8]{DN} that $w\not\equiv 0$ in $\Hs$ i.e., \eqref{MAT1} (with $f\equiv 0$) admits a positive radial solution in $\Hs$. In this case, we do not need to assume any asymptotic behavior of $a$ at infinity.
\end{remark}

\medskip

{\bf Acknowledgement}: The research of M.~Bhakta is partially supported by the SERB MATRICS grant (MTR/2017/000168). S.~Chakraborty is supported by NBHM grant 0203/11/2017/RD-II and D.~Ganguly is partially supported by INSPIRE faculty fellowship (IFA17-MA98).

\medskip

\end{document}